\newcommand\dbarb{\bar{\partial}_b}
\newcommand\boxb{\square_b}
\DeclareMathOperator\eo{eo}
\DeclareMathOperator\ooee{oe}
\newcommand\dVol{\operatorname{dV}}
\newcommand\Dom{\operatorname{Dom}}
\newcommand\spnc{\operatorname{Spin}_{\bbC}}
\newcommand\hn{(1,-\frac 12)}
\newcommand\cT{\mathcal T}
\newcommand\cH{\mathcal H}
\newcommand\cI{\mathcal I}
\newcommand\cO{\mathcal O}
\newcommand\cQ{\mathcal Q}
\newcommand\cM{\mathcal M}
\newcommand\cB{\mathcal B}
\newcommand\cR{\mathcal R}
\newcommand\cP{\mathcal P}
\newcommand\cV{\mathcal V}
\newcommand\bzero{\boldsymbol 0}
\newcommand\bX{\overline{X}}
\newcommand\dbar{\bar{\pa}}
\newcommand\dbnc{\dbar\rho\rfloor}
\newcommand\bcQ{\bar{\mathcal Q}}
\newcommand\starb{\star_b}
\newcommand\talpha{\widetilde{\alpha}}
\newcommand\tbeta{\widetilde{\beta}}
\newcommand\tgamma{\widetilde{\gamma}}
\newcommand\ta{\widetilde{a}}
\newcommand\tb{\tilde{b}}
\newcommand\txi{\widetilde{\xi}}
\newcommand{\Spn}{S\mspace{-10mu}/ }
\newcommand\Ker{\operatorname{ker}}
\newcommand\cS{\mathcal{S}}
\newcommand\hcS{\widehat{\mathcal{S}}}
\newcommand\bcS{\bar{\mathcal{S}}}
\newcommand\cD{\mathcal{D}}
\newcommand\nsigma{\sigma_\nu}
\newcommand\bsigma{\sigma_b}
\newcommand\bZ{\overline{Z}}
\newcommand\ha{\frac12}
\renewcommand\Im{\operatorname{Im}}
\newcommand\bbC{\mathbb C}
\newcommand\pa{\partial}
\newcommand\restrictedto{\upharpoonright}
\newcommand\CI{{\mathcal C}^{\infty}}
\newcommand\Id{\operatorname{Id}}
\DeclareMathOperator{\odd}{o}
\DeclareMathOperator{\even}{e}
\DeclareMathOperator{\Ind}{Ind}
\DeclareMathOperator{\Rind}{R-Ind}
\DeclareMathOperator{\ind}{Ind}
\newtheorem{theorem}{Theorem}
\newtheorem{proposition}{Proposition}
\newtheorem{corollary}{Corollary}
\newtheorem{lemma}{Lemma}
\theoremstyle{definition}
\theoremstyle{remark}
\newtheorem{remark}{Remark}
\begin{document}

\title{Subelliptic Spin\,${}_{\bbC}$ Dirac operators, I} 

\author{Charles L. Epstein\footnote{Keywords: Spin${}_{\bbC}$ Dirac operator,
index, subelliptic boundary value problem, $\dbar$-Neumann condition,
holomorphic Euler characteristic, Agranovich-Dynin formula, Bojarski
formula. Research partially supported by NSF grants DMS99-70487 and
DMS02-03795, and the Francis J. Carey term chair.  E-mail: cle@math.upenn.edu}
\\ Department of Mathematics\\ University of Pennsylvania}

\date{May 23, 2005: Revised version}

\maketitle

\centerline{\Large{\it Dedicated to my parents, Jean and Herbert Epstein,}}
\centerline{\Large{\it on the occasion of their eightieth birthdays}}

\begin{abstract} Let $X$ be a compact K\"ahler manifold with strictly
pseudoconvex boundary, $Y.$ In this setting, the Spin${}_{\bbC}$ Dirac operator
is canonically identified with
$\dbar+\dbar^*:\CI(X;\Lambda^{0,\even})\rightarrow\CI(X;\Lambda^{0,\odd}).$ We
consider modifications of the classical $\dbar$-Neumann conditions that define
Fredholm problems for the Spin${}_{\bbC}$ Dirac operator. In part
2,~\cite{Epstein3}, we use boundary layer methods to obtain subelliptic
estimates for these boundary value problems. Using these results, we obtain an
expression for the finite part of the holomorphic Euler characteristic of a
strictly pseudoconvex manifold as the index of a Spin${}_{\bbC}$-Dirac operator
with a subelliptic boundary condition. We also prove an analogue of the
Agranovich-Dynin formula expressing the change in the index in terms of a
relative index on the boundary. If $X$ is a complex manifold partitioned by a
strictly pseudoconvex hypersurface, then we obtain formul\ae\ for the
holomorphic Euler characteristic of $X$ as sums of indices of
Spin${}_{\bbC}$-Dirac operators on the components. This is a subelliptic
analogue of Bojarski's formula in the elliptic case.
\end{abstract}

\section*{Introduction}
Let $X$ be an even dimensional manifold with a Spin${}_{\bbC}$-structure,
 see~\cite{duistermaat,LawsonMichelsohn}.  A compatible choice of metric, $g,$ defines a
 Spin${}_{\bbC}$-Dirac operator, $\eth$ which acts on sections of the bundle of
 complex spinors, $\Spn.$ The metric on $X$ induces a metric on the bundle of
 spinors. If $\langle\sigma,\sigma\rangle_g$ denotes a pointwise inner
 product, then we define an inner product of the space of sections of
 $\Spn,$  by setting:
$$\langle\sigma,\sigma\rangle_X=\int\limits_{X}\langle\sigma,\sigma\rangle_g
dV_g$$

If $X$ has an almost complex structure, then this structure defines a
Spin${}_{\bbC}$-structure.  If the complex structure is integrable,
then the bundle of complex spinors is canonically identified with
$\oplus_{q\geq 0}\Lambda^{0,q}.$ As we usually work with the chiral operator,
we let
\begin{equation}
\Lambda^{\even}=\bigoplus\limits_{q=0}^{\lfloor\frac{n}{2}\rfloor}\Lambda^{0,2q}\quad
\Lambda^{\odd}=\bigoplus\limits_{q=0}^{\lfloor\frac{n-1}{2}\rfloor}\Lambda^{0,2q+1}.
\end{equation}
If the metric is K\"ahler, then the Spin${}_{\bbC}$ Dirac operator is given by
$$\eth=\dbar+\dbar^*.$$ 
Here $\dbar^*$ denotes the formal adjoint of $\dbar$
defined by the metric. This operator is called the Dolbeault-Dirac operator by
Duistermaat, see~\cite{duistermaat}.  If the metric is Hermitian, though not
K\"ahler, then
\begin{equation}
\eth=\dbar+\dbar^*+\cM_0,
\label{7.28.1}
\end{equation}
here $\cM_0$ is a homomorphism carrying $\Lambda^{\even}$ to
$\Lambda^{\odd}$ and vice versa. It vanishes at points where the metric is
K\"ahler.  It is customary to write $\eth=\eth^{\even}+\eth^{\odd}$
where
$$\eth^{\even}:\CI(X;\Lambda^{\even})\longrightarrow
\CI(X,\Lambda^{\odd})$$
and $\eth^{\odd}$ is the formal adjoint of $\eth^{\even}.$ If $X$ is a
compact, complex manifold, then the graph closure of $\eth^{\even}$ is a Fredholm
operator. It has the same principal symbol as $\dbar+\dbar^*$  and therefore
its index is given by  
\begin{equation}
\ind(\eth^{\even})=\sum_{j=0}^n(-1)^j\dim H^{0,j}(X)=\chi_{\cO}(X).
\label{1.22.03.1}
\end{equation}

If $X$ is a manifold with boundary, then the kernels and cokernels of
$\eth^{\eo}$ are generally infinite dimensional. To obtain a Fredholm operator
we need to impose boundary conditions. In this instance there are no local
boundary conditions for $\eth^{\eo}$ that define elliptic problems.  Starting
with Atiyah, Patodi and Singer, boundary conditions defined by classical
pseudodifferential projections have been the focus of most of the work in this
field. Such boundary conditions are very useful for studying topological
problems, but are not well suited to the analysis of problems connected to the
holomorphic structure of $X.$ To that end we begin the study of boundary
conditions for $\eth^{\eo}$ obtained by modifying the classical $\dbar$-Neumann
and dual $\dbar$-Neumann conditions.  For a $(0,q)$-form, $\sigma^{0q},$ The
$\dbar$-Neumann condition  is the requirement that
$$\dbnc[\sigma^{0q}]_{bX}=0.$$ This imposes no condition if $q=0,$ and all
square integrable holomorphic functions thereby belong to the domain of the
operator, and define elements of the null space of $\eth^{\even}.$ Let $\cS$
denote the Szeg\H o projector; this is an operator acting on functions on $bX$
with range equal to the null space of the tangential Cauchy-Riemann operator,
$\dbarb.$ We can remove the null space in degree $0$ by adding the condition
\begin{equation}
\cS[\sigma^{00}]_{bX}=0.
\end{equation}
This, in turn, changes the boundary condition in degree $1$ to
\begin{equation}
(\Id-\cS)[\dbnc\sigma^{01}]_{bX}=0.
\end{equation}
If $X$ is strictly pseudoconvex, then these modifications to the
$\dbar$-Neumann condition produce a Fredholm boundary value problem for $\eth.$
Indeed, it is not necessary to use the exact Szeg\H o projector, defined by the
induced CR-structure on $bX.$ Any generalized Szeg\H o projector, as defined
in~\cite{EpsteinMelrose3}, suffices to prove the necessary estimates. There are
analogous conditions for strictly pseudoconcave
manifolds. In~\cite{BaumDouglasTaylor} and~\cite{Taylor7, Taylor8} the Spin${}_{\bbC}$
Dirac operator with the $\dbar$-Neumann condition is considered, though from a very
different perspective. The results in these papers are largely orthogonal to
those we have obtained.

A pseudoconvex manifold is denoted by $X_+$ and objects associated with it are
labeled with a $+$ subscript, e.~g., the $\spnc$-Dirac operator on $X_+$ is
denoted $\eth_+.$ Similarly, a pseudoconcave manifold is denoted by $X_-$ and
objects associated with it are labeled with a $-$ subscript. Usually $X$
denotes a compact manifold, partitioned by an embedded, strictly pseudoconvex
hypersurface, $Y$ into two components, $X\setminus Y=X_+\coprod X_-.$ 

If $X_{\pm}$ is either strictly pseudoconvex or strictly pseudoconcave, then the
modified boundary conditions are subelliptic and define Fredholm operators. The
indices of these operators are connected to the holomorphic Euler
characteristics of these manifolds with boundary, with the contributions of the
infinite dimensional groups removed. We also consider the Dirac operator acting
on the twisted spinor bundles
$$\Lambda^{p,\eo}=\Lambda^{\eo}\otimes\Lambda^{p,0},$$ 
and more generally
$\Lambda^{\eo}\otimes\cV$ where $\cV\to X$ is a holomorphic vector bundle. When
necessary, we use $\eth^{\eo}_{\cV\pm}$ to specify the twisting bundle. The
boundary conditions are defined by projection operators $\cR^{\eo}_{\pm}$
acting on boundary values of sections of $\Lambda^{\eo}\otimes\cV.$ Among other
things we show that the index of $\eth^{\even}_+$ with boundary condition defined by
$\cR^{e}_+$ equals the regular part of the holomorphic Euler characteristic:
\begin{equation}
\Ind(\eth^{\even}_{+},\cR^{\even}_+)=
\sum_{q=1}^{n}\dim H^{0,q}(X) (-1)^q.
\end{equation}

In~\cite{Epstein3}
we show that the pairs $(\eth^{\eo}_{\pm},\cR^{\eo}_{\pm})$ are Fredholm and
identify their $L^2$-adjoints. In each case, the $L^2$-adjoint is the closure
of the formally adjoint boundary value problem, e.~g.
$$(\eth_+^{\even},\cR_+^{\even})^*=\overline{(\eth_+^{\odd},\cR_+^{\odd})}.$$
This is proved by using a boundary layer method to reduce to analysis of
operators on the boundary. The operators we obtain on the boundary are neither
classical, nor Heisenberg pseudodifferential operators, but rather operators
belonging to the extended Heisenberg calculus introduced
in~\cite{EpsteinMelrose3}. Similar classes of operators were also introduced by
Beals, Greiner and Stanton as well as Taylor,
see~\cite{Beals-Stanton,Beals-Greiner1,Taylor6}.  In this paper we apply the
analytic results obtained in~\cite{Epstein3} to obtain Hodge decompositions for
each of the boundary conditions and $(p,q)$-types.

In the Section~\ref{s.bkgrnd} we review some well known facts about the
$\dbar$-Neumann problem and analysis on strictly pseudoconvex CR-manifolds. In
the following two sections we introduce the boundary conditions we consider in
the remainder of the paper and deduce subelliptic estimates for these
boundary value problems from the results in~\cite{Epstein3}.  The fourth
section introduces the natural dual boundary conditions.  In
Section~\ref{s.highnorm} we deduce the Hodge decompositions associated to the
various boundary value problems defined in the earlier sections.  In
Section~\ref{s.nullsp} we identify the nullspaces of the various boundary value
problems when the classical Szeg\H o projectors are used. In the
Section~\ref{s.agrdyn} we establish the basic link between the boundary
conditions for $(p,q)$-forms considered in the earlier sections and boundary
conditions for $\eth^{\eo}_{\pm}$ and prove an analogue of the Agranovich-Dynin
formula. In Section~\ref{s.lexseq} we obtain ``regularized'' versions of some
long exact sequences due to Andreotti and Hill. Using these sequences we prove
gluing formul\ae\ for the holomorphic Euler characteristic of a compact complex
manifold, $X,$ with a strictly pseudoconvex separating hypersurface.  These
formul\ae\ are subelliptic analogues of Bojarski's gluing formula for the
classical Dirac operator with APS-type boundary conditions.

{\small
\centerline{Acknowledgments} Boundary conditions similar to those considered in this paper
were first suggested to me by Laszlo Lempert. I would like to thank John Roe
for some helpful pointers on the Spin${}_{\bbC}$ Dirac operator.}

\section{Some background material}\label{s.bkgrnd}

Henceforth $X_{+}$ ($X_-$) denotes a compact complex manifold of complex
dimension $n$ with a strictly pseudoconvex (pseudoconcave) boundary. We assume
that a Hermitian metric, $g$ is fixed on $X_{\pm}.$ For some of our results we
make additional assumptions on the nature of $g,$ e.~g., that it is
K\"ahler. This metric induces metrics on all the natural bundles defined by the
complex structure on $X_{\pm}.$ To the extent possible, we treat the two cases
in tandem. For example, we sometimes use $bX_{\pm}$ to denote the boundary of
either $X_{+}$ or $X_{-}.$ The kernels of $\eth_{\pm}$ are both infinite
dimensional.  Let $\cP_{\pm}$ denote the operators defined on $bX_{\pm}$ which
are the projections onto the boundary values of element in $\ker\eth_{\pm};$
these are the Calderon projections. They are classical pseudodifferential
operators of order 0; we use the definitions and analysis of these operators
presented in~\cite{BBW}. 

We often work with the chiral Dirac operators $\eth^{\eo}_{\pm}$ which act on
sections of
\begin{equation}
\Lambda^{p,\even}=\bigoplus\limits_{q=0}^{\lfloor \frac n2\rfloor}
\Lambda^{p,2q}X_{\pm}\quad
\Lambda^{p,\odd}=\bigoplus\limits_{q=0}^{\lfloor \frac {n-1}2\rfloor}
\Lambda^{p,2q+1}X_{\pm},
\end{equation}
respectively. Here $p$ is an integer between $0$ and $n;$ except when entirely
necessary it is omitted from the notation for things like $\cR^{\eo}_{\pm},
\eth^{\eo}_{\pm},$ etc. The $L^2$-closure of the operators $\eth^{\eo}_{\pm},$ with
domains consisting of smooth spinors such that
$\cP^{\eo}_{\pm}(\sigma\big|_{bX_{\pm}})=0,$ are elliptic operators with
Fredholm index zero.

Let $\rho$ be a smooth defining function for the boundary of $X_{\pm}.$ Usually
we take $\rho$ to be \emph{negative} on $X_+$ and \emph{positive} on $X_-,$ so
that $\pa\dbar\rho$ is positive definite near $bX_{\pm}.$ If $\sigma$ is a
section of $\Lambda^{p,q},$ smooth up to $bX_{\pm},$ then the $\dbar$-Neumann
boundary condition is the requirement that
\begin{equation}
\dbar\rho\rfloor\sigma\restrictedto_{bX_{\pm}}=0.
\label{dbrnc}
\end{equation} 
If $X_{+}$ is strictly pseudoconvex, then there is a constant $C$ such that if
$\sigma$ is a smooth section of $\Lambda^{p,q},$ with $q\geq 1,$
satisfying~\eqref{dbrnc}, then $\sigma$ satisfies the \emph{basic estimate}:
\begin{equation}
\|\sigma\|_{(1,-\frac 12)}^2\leq
C(\|\dbar\sigma\|_{L^2}^2+\|\dbar^*\sigma\|_{L^2}^2+
\|\sigma\|_{L^2}^2).
\label{bscest}
\end{equation}
If $X_{-}$ is strictly pseudoconcave, then there is a constant $C$ such that if
$\sigma$ is a smooth section of $\Lambda^{p,q},$ with $q\neq n-1,$
satisfying~\eqref{dbrnc}, then  $\sigma$ again satisfies the basic
estimate~\eqref{bscest}. The $\square$-operator is defined formally as
$$\square\sigma=(\dbar\dbar^*+\dbar^*\dbar)\sigma.$$ The $\square$-operator,
with the $\dbar$-Neumann boundary condition is the graph closure of $\square$
acting on smooth forms, $\sigma,$ that satisfy~\eqref{dbrnc}, such that
$\dbar\sigma$ also satisfies~\eqref{dbrnc}. It has an infinite dimensional
nullspace acting on sections of $\Lambda^{p,0}(X_+)$ and
$\Lambda^{p,n-1}(X_-),$ respectively. For clarity, we sometimes use the
notation $\square^{p,q}$ to denote the $\square$-operator acting on sections of
$\Lambda^{p,q}.$ 

Let $Y$ be a compact strictly pseudoconvex CR-manifold of real dimension
$2n-1.$ Let $T^{0,1}Y$ denote the $(0,1)$-part of $TY\otimes\bbC$ and $\cT Y$
the holomorphic vector bundle $TY\otimes\bbC/T^{0,1} Y.$ The dual bundles are
denoted $\Lambda^{0,1}_b$ and $\Lambda^{1,0}_b$ respectively. For $0\leq p\leq
n,$ let
\begin{equation}
\CI(Y;\Lambda_b^{p,0})
\overset{\dbarb}{\longrightarrow}\CI(Y;\Lambda_b^{p,1})
\overset{\dbarb}{\longrightarrow}\hdots\overset{\dbarb}{\longrightarrow}
\CI(Y;\Lambda_b^{p,n-1})
\label{dbrbc}
\end{equation}
denote the $\dbarb$-complex. Fixing a choice of Hermitian metric on $Y,$ we
define formal adjoints
$$\dbarb^*:\CI(Y;\Lambda_b^{p,q})\longrightarrow \CI(Y;\Lambda_b^{p,q-1}).$$
The $\boxb$-operator acting on $\Lambda_b^{p,q}$ is the graph closure of 
\begin{equation}
\boxb=\dbarb\dbarb^*+\dbarb^*\dbarb,
\end{equation}
acting on $\CI(Y;\Lambda_b^{p,q}).$ The operator $\boxb^{p,q}$ is subelliptic
if $0<q<n-1.$ If $q=0,$ then $\dbarb$ has an infinite dimensional nullspace,
while if $q=n-1,$ then $\dbarb^*$ has an infinite dimensional nullspace. We let
$\cS_p$ denote an orthogonal projector onto the nullspace of $\dbarb$ acting on
$\CI(Y;\Lambda_b^{p,0}),$ and $\bcS_p$ an orthogonal projector onto the
nullspace of $\dbarb^*$ acting on $\CI(Y;\Lambda_b^{p,n-1}).$ The operator
$\cS_p$ is usually called ``the'' Szeg\H o projector; we call $\bcS_p$ the
conjugate Szeg\H o projector. These projectors are only defined once a metric
is selected, but this ambiguity has no bearing on our results. As is well
known, these operators are \emph{not} classical pseudodifferential operators,
but belong to the Heisenberg calculus. Generalizations of these projectors are
introduced in~\cite{EpsteinMelrose3} and play a role in the definition of
subelliptic boundary value problems for $\eth.$ For $0<q<n-1,$ the Kohn-Rossi
cohomology groups
$$H_b^{p,q}(Y)=\frac{\Ker\{ \dbarb:\CI(Y;\Lambda_b^{p,q})
\to\CI(Y;\Lambda_b^{p,q+1})\}}
{\dbarb \CI(Y;\Lambda_b^{p,q-1})}$$
are finite dimensional. The regularized $\dbarb$-Euler characteristics of $Y$
are defined to be
\begin{equation}
\chi_{pb}'(Y)=\sum_{q=1}^{n-2}(-1)^q\dim H_b^{p,q}(Y),\text{ for }0\leq p\leq n.
\label{dbrbeul}
\end{equation}
Very often we use $Y$ to denote the boundary of $X_{\pm}.$

The Hodge star operator on $X_{\pm}$ defines an isomorphism
\begin{equation}
\star:\Lambda^{p,q}(X_{\pm})\longrightarrow \Lambda^{n-p,n-q}(X_{\pm}).
\label{hdgstr1}
\end{equation}
Note that we have incorporated  complex conjugation into the definition of
the Hodge star operator. The usual identities continue to hold, i.~e.,
\begin{equation}
\star\star=(-1)^{p+q},\quad\dbar^*=-\star\dbar\star.
\label{04.20}
\end{equation}
There is also a Hodge star operator on $Y$ that defines an isomorphism:
\begin{equation}
\starb:\Lambda^{p,q}_b(Y)\longrightarrow
\Lambda^{n-p,n-q-1}_b(Y),\quad[\dbarb^{p,q}]^*=(-1)^{p+q+1}\starb\dbarb\starb.
\label{eqn04.17}
\end{equation}

There is a canonical boundary condition dual to the $\dbar$-Neumann
condition. The dual $\dbar$-Neumann condition is the requirement that
\begin{equation}
\dbar\rho\wedge\sigma\restrictedto_{bX_{\pm}}=0.
\label{dldbrnc}
\end{equation} 
If $\sigma$ is a $(p,q)$-form defined on $X_{\pm},$ then, along the
boundary we can write
\begin{equation}
\sigma\restrictedto_{bX_{\pm}}
=\dbar\rho\wedge(\dbar\rho\rfloor\sigma)+\sigma_b.
\label{neqn2}
\end{equation}
Here $\sigma_b\in\CI(Y;\Lambda_b^{p,q})$ is a representative of
 $\sigma\restrictedto_{(\cT Y)^p\otimes (T^{0,1}Y)^q}.$ The dual $\dbar$-Neumann
 condition is equivalent to the condition
\begin{equation}
\sigma_b=0.
\label{dldbrnc2}
\end{equation}
For later applications we note the following well known relations: For sections
$\sigma\in\CI(\bX_{\pm},\Lambda^{p,q}),$ we have
\begin{equation}
(\dbar\rho\rfloor \sigma)^{\starb}=(\sigma^{\star})_b,\quad
\dbar\rho\rfloor(\sigma^\star)=\bsigma^{\starb},\quad
(\dbar\sigma)_b =\dbarb\sigma_b.
\label{strrel}
\end{equation}

The dual $\dbar$-Neumann operator on $\Lambda^{p,q}$ is the graph closure of
$\square^{p,q}$ on smooth sections, $\sigma$ of $\Lambda^{p,q}$
satisfying~\eqref{dldbrnc}, such that $\dbar^*\sigma$ also
satisfies~\eqref{dldbrnc}.  For a strictly pseudoconvex manifold, the basic
estimate holds for $(p,q)$-forms satisfying~\eqref{dldbrnc}, provided $0\leq
q\leq n-1.$ For a strictly pseudoconcave manifold, the basic estimate holds for
$(p,q)$-forms satisfying~\eqref{dldbrnc}, provided $q\neq 1.$

As we consider many different boundary conditions, it is useful to have
notations that specify the boundary condition under consideration. If $\cD$
denotes an operator acting on sections of a complex vector bundle, $E\to X$ and $\cB$
denotes a boundary operator acting on sections of $E\restrictedto_{bX},$ then
the pair $(\cD,\cB)$ is the operator $\cD$ acting on smooth sections $s$ that
satisfy
$$\cB s\restrictedto_{bX}=0.$$ 
The notation $s\restrictedto_{bX}$ refers to the section of
$E\restrictedto_{bX}$ obtained by restricting a section $s$ of $E\to X$ to the
boundary. The operator $\cB$ is a pseudodifferential operator acting on
sections of $E\restrictedto_{bX}.$ Some of the boundary conditions we consider
are defined by Heisenberg pseudodifferential operators. We often denote objects
connected to $(\cD,\cB)$ with a subscripted $\cB.$ For example, the nullspace of
$(\cD,\cB)$ (or harmonic sections) might be denoted $\cH_{\cB}.$ We denote
objects connected to the $\dbar$-Neumann operator with a subscripted $\dbar,$
e.~g., $\square^{p,q}_{\dbar}.$ Objects connected to the dual $\dbar$-Neumann
problem are denoted by a subscripted $\dbar^*,$ e.~g.,
$\square^{p,q}_{\dbar^*}.$

Let $\cH_{\dbar}^{p,q}(X_{\pm})$ denote the nullspace of
$\square^{p,q}_{\dbar}$ and $\cH_{\dbar^*}^{p,q}(X_{\pm})$ the nullspace of
$\square^{p,q}_{\dbar^*}.$ In~\cite{kohn-rossi} it is shown that
\begin{equation}
\begin{split}
\cH_{\dbar}^{p,q}(X_+)&\simeq [\cH_{\dbar^*}^{n-p,n-q}(X_+)]^*,\text{ if }q\neq 0,\\
\cH_{\dbar}^{p,q}(X_-)&\simeq [\cH_{\dbar^*}^{n-p,n-q}(X_-)]^*,\text{ if }q\neq n-1.
\end{split}
\label{duality1}
\end{equation}

\begin{remark} In this paper $C$ is used to denote a variety of \emph{positive}
constants which depend only on the geometry of $X.$ If $M$ is a manifold with a
volume form $\dVol$ and $f_1,f_2$ are sections of a bundle with a Hermitian metric
$\langle\cdot,\cdot\rangle_g,$ then the $L^2$-inner product over $M$ is denoted
by
\begin{equation}
\langle f_1,f_2\rangle_{M}=\int\limits_{M}\langle f_1,f_2\rangle_g \dVol.
\end{equation}
\end{remark}

\section{Subelliptic boundary conditions for pseudoconvex manifolds}\label{s.pscnvx}
In this section we define a modification of the classical $\dbar$-Neumann
condition for sections belonging to $\CI(\bar{X}_+;\Lambda^{p,q}),$ for $0\leq
p\leq n$ and $0\leq q\leq n.$ The bundles $\Lambda^{p,0}$ are holomorphic, and
so, as in the classical case they do not not really have any effect on the
estimates. As above, $\cS_p$ denotes an orthogonal projection acting on
sections of $\Lambda^{p,0}_b$ with range equal to the null space of $\dbarb$
acting  sections of $\Lambda^{p,0}_b.$ The range of $\cS_p$
includes the boundary values of holomorphic $(p,0)$-forms, but may in general
be somewhat larger.  If $\sigma^{p0}$ is a holomorphic section, then
$\sigma^{p0}_b=\cS_{p}\sigma^{p0}_b.$ On the other hand, if $\sigma^{p0}$ is
any smooth section of $\Lambda^{p,0},$ then $\dbar\rho\rfloor\sigma^{p0}=0$ and
therefore, the $L^2$-holomorphic sections belong to the nullspace of
$\square^{p0}_{\dbar}.$

To obtain a subelliptic boundary value problem for $\square^{pq}$ in all
degrees, we modify the $\dbar$-Neumann condition in degrees $0$ and $1.$ The
modified boundary condition is denoted by $\cR_+.$ A smooth form
$\sigma^{p0}\in\Dom(\dbar_{\cR_+}^{p,0})$ provided
\begin{equation}
\cS_p\sigma^{p0}_b=0.
\label{eqn04.1}
\end{equation}
There is no boundary condition if $q>0.$ A smooth form belongs to
$\Dom([\dbar_{\cR_+}^{p,q}]^*)$ provided
\begin{equation}
\begin{split}
(\Id-\cS_p)[\dbar\rho\rfloor\sigma^{p1}]_b&=0,\\
[\dbar\rho\rfloor\sigma^{pq}]_b &=0\quad\text{ if }1<q.
\end{split}
\label{eqn04.2}
\end{equation}
For each $(p,q)$ we define the quadratic form
\begin{equation}
\cQ^{p,q}(\sigma^{pq})=\langle\dbar\sigma^{pq},\dbar\sigma^{pq}\rangle_{L^2}+
\langle\dbar^*\sigma^{pq},\dbar^*\sigma^{pq}\rangle_{L^2}
\end{equation}

We can  consider more general conditions than these by replacing the
classical Szeg\H o projector $\cS_p$ by a generalized Szeg\H o projector acting
on sections of $\Lambda^{p,0}_b.$  
Recall that an order zero operator, $S_E$ in the Heisenberg calculus, acting on
sections of a complex vector bundle $E\to Y$ is a generalized Szeg\H o projector if
\begin{enumerate}
\item $S_E^2=S_E$ and $S_E^*=S_E.$
\item $\sigma^H_0(S_E)=s\otimes \Id_E$ where $s$ is the symbol of a field of
  vacuum state projectors defined by a choice of compatible almost complex
  structure on the contact field of $Y.$
\end{enumerate}
This class of projectors is defined in~\cite{EpsteinMelrose} and analyzed in detail
in~\cite{EpsteinMelrose3}. Among other things we show that, given a generalized Szeg\H o
projector, there is a $\dbarb$-like operator, $D_E$ so that the range of
$S_E$ is precisely the null space of $D_E.$ The operator $D_E$ is $\dbarb$-like
in the following sense: If ${\bZ'_j}$ is a local frame field for the almost
complex structure defined by the principal symbol of $S_E,$ then there are
order zero Heisenberg operators $\mu_j,$ so that, locally
\begin{equation}
D_E \sigma=0\text{ if and only if }(\bZ_j'+\mu_j)\sigma=0\text{ for
}j=1,\dots,n-1.
\label{eqn6.17.2}
\end{equation}
Similar remarks apply to define generalized conjugate Szeg\H o projectors. 
We use the notation $\cS'_p$ to denote a generalized Szeg\H o projector acting
on sections of $\Lambda^{p,0}_b.$ 

We can view these boundary conditions as boundary conditions for the
operator $\eth_+$ acting on sections of $\oplus_q\Lambda^{p,q}.$ Let $\sigma$
be a such a section.  The boundary condition is expressed as a projection
operator acting on $\sigma\restrictedto_{bX_+}.$ We write
\begin{equation}
\begin{split}
\sigma\restrictedto_{bX_+}=\bsigma&+\dbar\rho\wedge\nsigma,\text{ with }\\
\bsigma=(\sigma^{p0}_b,\tilde\bsigma^{p})&\text{ and }
\nsigma=(\nsigma^{p1},\tilde\nsigma^{p}).
\end{split}
\end{equation}
Recall that $\bsigma^{pn}$ and $\nsigma^{p0}$ always vanish. With this notation
we have, in block form, that
\begin{equation}
\cR_+'\sigma\restrictedto_{bX_+}=
\left(\begin{matrix}\begin{matrix} \cS_p' & 0 \\
0 & \bzero\\
\end{matrix} &
\begin{matrix} 0 & 0\\
0&\bzero
\end{matrix}\\
\begin{matrix} 0 & 0\\
0&\bzero
\end{matrix}&  
\begin{matrix}\Id- \cS_p' & 0 \\
0 & \Id\\
\end{matrix} \end{matrix}\right)
\left(\begin{matrix} \bsigma^{p0}\\
\tilde\bsigma^{p}\\
\nsigma^{p1}\\
\tilde\nsigma^{p}\end{matrix}\right)
\label{7.27.1}
\end{equation}
Here $\bzero$ denotes an $(n-1)\times (n-1)$ matrix of zeros. The boundary
condition for $\eth_+$ is $\cR_+'\sigma\restrictedto_{bX_+}=0.$ These can of
course be split into boundary conditions for $\eth_+^{\eo},$ which we denote by
$\cR_+^{\prime\eo}.$ The formal adjoint of $(\eth_+^{\even},\cR_+^{\prime\even})$ is
$(\eth_+^{\odd},\cR_+^{\prime\odd}).$ In Section~\ref{s.agrdyn} we show that the
$L^2$-adjoint of $(\eth_+^{\even},\cR_+^{\prime\even})$ is the graph closure of
$(\eth_+^{\odd},\cR_+^{\prime\odd}).$ When the distinction is important, we
explicitly indicate the dependence on $p$ by using $\cR_{p+}'$ to denote the
projector acting on sections of $\oplus_q\Lambda^{p,q}\restrictedto_{bX_+}$ and
$\eth_{p+}$ to denote the operator acting on sections of
$\oplus_q\Lambda^{p,q}.$ 

We use $\cR_+$ (without the ${}'$) to denote the boundary condition defined by
the matrix in~\eqref{7.27.1}, with $\cS_p'=\cS_p,$ the classical Szeg\H o
projector. In~\cite{Epstein3}, we prove estimates for the Spin${}_{\bbC}$-Dirac
operator with these sorts of boundary conditions.  We first state a direct
consequence of Corollary 13.9 in~\cite{BBW}.
\begin{lemma}\label{lem111} Let $X$ be a complex manifold with boundary and $\sigma^{pq}\in
  L^2(X;\Lambda^{p,q}).$ Suppose that $\dbar\sigma^{pq},\dbar^*\sigma^{pq}$ are
  also square integrable, then $\sigma^{pq}\restrictedto_{bX}$ is well defined
  as an element of $H^{-\ha}(bX; \Lambda^{p,q}_{bX}).$
\end{lemma}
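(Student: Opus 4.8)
The plan is to recognize $\dbar+\dbar^*$ as a first-order elliptic differential operator and to invoke the trace theorem for such operators, which is exactly the content of Corollary 13.9 in~\cite{BBW}. The whole point is that neither $\dbar$ nor $\dbar^*$ is elliptic on its own, but their sum is, and it is precisely the two hypotheses $\dbar\sigma^{pq}\in L^2$ and $\dbar^*\sigma^{pq}\in L^2$ that place $\sigma^{pq}$ in the graph-norm domain of this elliptic operator.

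First I would assemble the fixed degree $\sigma^{pq}$ into the full bundle $E=\bigoplus_{q=0}^{n}\Lambda^{p,q}$ and consider $D=\dbar+\dbar^*$ acting on sections of $E$. This $D$ is a first-order differential operator, and it is elliptic: its principal symbol squares to the principal symbol of $\square=\dbar\dbar^*+\dbar^*\dbar$, which on the full degree bundle $E$ equals $\frac12|\xi|^2\operatorname{Id}$. (This is the standard fact underlying the identification of $\dbar+\dbar^*$ with the Spin${}_{\bbC}$ Dirac operator, and it is why no zeroth-order correction $\cM_0$ is needed here, regardless of whether $g$ is K\"ahler.) Since $\sigma^{pq}$ has pure bidegree, $D\sigma^{pq}=\dbar\sigma^{pq}+\dbar^*\sigma^{pq}$, with $\dbar\sigma^{pq}\in\CI$-dual landing in $\Lambda^{p,q+1}$ and $\dbar^*\sigma^{pq}$ in $\Lambda^{p,q-1}$; both are square integrable by hypothesis, so $D\sigma^{pq}\in L^2(X;E)$. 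Thus $\sigma^{pq}$ lies in the domain of the maximal (graph) closure of $D$.

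Next I would apply the trace theorem directly. Corollary 13.9 of~\cite{BBW} asserts that for a first-order elliptic differential operator $D$ on a manifold with boundary, the restriction-to-the-boundary map, defined on smooth sections, extends continuously from the graph-norm domain $\{u\in L^2:Du\in L^2\}$ to $H^{-\ha}(bX;E\restrictedto_{bX})$, with an estimate of the form $\|u\restrictedto_{bX}\|_{H^{-\ha}}\leq C(\|u\|_{L^2}+\|Du\|_{L^2})$. Applying this to $\sigma^{pq}$ produces a well-defined trace $\sigma^{pq}\restrictedto_{bX}\in H^{-\ha}(bX;E\restrictedto_{bX})$. Finally, since $\sigma^{pq}$ is a section of the single summand $\Lambda^{p,q}\subset E$, its trace lies in the corresponding summand $H^{-\ha}(bX;\Lambda^{p,q}_{bX})$, which is the asserted conclusion.

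There is no serious obstacle here: the substance of the statement is the general elliptic trace theorem, which we are entitled to cite, and the lemma is its translation into the Dolbeault setting. The only genuine points to verify are the ellipticity of $D=\dbar+\dbar^*$ on the full bundle $E$ (standard, via the symbol computation above) and the bookkeeping that a pure-degree section has a pure-degree trace. The conceptual subtlety worth flagging, rather than a technical difficulty, is that one must pass through the combined operator $\dbar+\dbar^*$: working with $\dbar$ or $\dbar^*$ in isolation would not give an elliptic operator, so both $L^2$ hypotheses are used in an essential way to land in the domain on which the trace map is defined.
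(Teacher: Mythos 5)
Your proposal is correct and is essentially the paper's own proof: both arguments place $\sigma^{pq}$ in the graph-norm domain of a first-order operator of Dirac type and then quote Corollary 13.9 of~\cite{BBW} to obtain the trace in $H^{-\ha}(bX;\Lambda^{p,q}_{bX}).$ The only cosmetic difference is that the paper routes the argument through the genuine Spin${}_{\bbC}$-Dirac operator $\eth=\dbar+\dbar^*+\cM_0$ (noting that the zeroth-order term $\cM_0$ preserves square-integrability), whereas you apply the trace theorem to $\dbar+\dbar^*$ directly; since $\cM_0$ is a bundle map, the two formulations are interchangeable.
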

\begin{proof} Because $X$ is a complex manifold, the twisted Spin${}_{\bbC}$-Dirac
  operator acting on sections of $\Lambda^{p,*}$ is given
  by~\eqref{7.28.1}. The hypotheses of the lemma therefore imply that
  $\eth\sigma^{pq}$ is square integrable and the lemma follows directly from
  Corollary 13.9 in~\cite{BBW}.
\end{proof}
\begin{remark} If the restriction of a section of a vector bundle to the
  boundary is well defined in the sense of distributions then we say that the
  section has distributional boundary values. Under the hypotheses of the
  Lemma, $\sigma^{pq}$ has distributional boundary values.
\end{remark}

Theorem 3 in~\cite{Epstein3} implies the following estimates for the
individual form degrees:
\begin{proposition} Suppose that $X$ is a strictly pseudoconvex manifold,
  $\cS_p'$ is a generalized Szeg\H o projector acting on sections of
  $\Lambda^{p,0}_b,$ and let $s\in [0,\infty).$ There is a constant $C_s$ such
  that if $\sigma^{pq}$ is an
  $L^2$-section of $\Lambda^{p,q}$ with $\dbar\sigma^{pq},
  \dbar^*\sigma^{pq}\in H^s$ and 
\begin{equation}
\begin{split}
&\cS_p'[\sigma^{pq}]_b=0\quad\text{ if } q=0\\
&(\Id-\cS_p')[\dbnc\sigma^{pq}]_b=0\quad\text{ if } q=1\\
&[\dbnc\sigma^{p1}]_b=0\quad\text{ if } q>1,
\end{split}
\label{7.27.4}
\end{equation}
then
\begin{equation}
\|\sigma^{pq}\|_{H^{s+\ha}}\leq
C_s[\|\dbar\sigma^{pq}\|_{H^s}+\|\dbar^*\sigma^{pq}\|_{H^s}+\|\sigma^{pq}\|_{L^2}]
\label{7.27.3}
\end{equation}
\end{proposition}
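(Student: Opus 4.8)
The plan is to deduce the single-degree estimate~\eqref{7.27.3} from the estimate for the full chiral operator $(\eth_+,\cR_+')$ furnished by Theorem 3 of~\cite{Epstein3}, by regarding $\sigma^{pq}$ as a section of the graded bundle $\oplus_{q'}\Lambda^{p,q'}$ all of whose components except the one in degree $q$ vanish. First I would check that such a section lies in the domain cut out by $\cR_+'.$ Writing $\sigma^{pq}\restrictedto_{bX_+}=\bsigma+\dbar\rho\wedge\nsigma$ as in~\eqref{neqn2} and inspecting the block form~\eqref{7.27.1}, the requirement $\cR_+'\sigma^{pq}\restrictedto_{bX_+}=0$ reduces to three conditions: $\cS_p'[\sigma^{pq}]_b=0$ in the slot $\sigma^{p0}_b$ (nonvacuous only when $q=0$); $(\Id-\cS_p')[\dbnc\sigma^{pq}]_b=0$ in the slot $\nsigma^{p1}$ (nonvacuous only when $q=1$); and $[\dbnc\sigma^{pq}]_b=0$ in the remaining normal slots $\tilde\nsigma^{p}$ (nonvacuous only when $q>1$). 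The tangential block is $\bzero,$ so it imposes nothing. These are precisely the hypotheses~\eqref{7.27.4}, and all other blocks act on components that vanish identically. By Lemma~\ref{lem111} the boundary data $[\sigma^{pq}]_b$ and $[\dbnc\sigma^{pq}]_b$ are well defined as distributions, so the order-zero Heisenberg projectors $\cS_p'$ and $\Id-\cS_p'$ may be applied and the conditions are meaningful.

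With membership in the domain verified, Theorem 3 of~\cite{Epstein3} applies to $\sigma^{pq}$ and yields
\[
\|\sigma^{pq}\|_{H^{s+\ha}}\leq C_s\bigl[\,\|\eth_+\sigma^{pq}\|_{H^s}+\|\sigma^{pq}\|_{L^2}\,\bigr].
\]
It then remains to trade $\|\eth_+\sigma^{pq}\|_{H^s}$ for $\|\dbar\sigma^{pq}\|_{H^s}+\|\dbar^*\sigma^{pq}\|_{H^s}.$ By~\eqref{7.28.1} we have $\eth_+\sigma^{pq}=\dbar\sigma^{pq}+\dbar^*\sigma^{pq}+\cM_0\sigma^{pq},$ where $\dbar\sigma^{pq}$ and $\dbar^*\sigma^{pq}$ are sections of the mutually orthogonal summands $\Lambda^{p,q+1}$ and $\Lambda^{p,q-1},$ and $\cM_0$ is a bundle homomorphism (order zero, vanishing where $g$ is K\"ahler). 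The triangle inequality in $H^s$ gives
\[
\|\eth_+\sigma^{pq}\|_{H^s}\leq \|\dbar\sigma^{pq}\|_{H^s}+\|\dbar^*\sigma^{pq}\|_{H^s}+C\|\sigma^{pq}\|_{H^s}.
\]

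The only delicate point is the zeroth-order contribution $C\|\sigma^{pq}\|_{H^s},$ which cannot simply be discarded when $s>0.$ I would remove it by interpolation: for every $\ep>0$ there is a $C_\ep$ with $\|\sigma^{pq}\|_{H^s}\leq \ep\|\sigma^{pq}\|_{H^{s+\ha}}+C_\ep\|\sigma^{pq}\|_{L^2}.$ Inserting the two displayed inequalities into the first one and choosing $\ep$ small enough to absorb $\ep\|\sigma^{pq}\|_{H^{s+\ha}}$ into the left-hand side produces~\eqref{7.27.3}; when $g$ is K\"ahler, $\cM_0=0$ and this absorption is unnecessary. The genuinely substantive input is Theorem 3 of~\cite{Epstein3}: the main obstacle in the present deduction is conceptual bookkeeping, namely recognizing that the block-diagonal projector~\eqref{7.27.1} decouples across form degrees so that a single-degree form satisfies exactly~\eqref{7.27.4}, with the harmless interpolation above clearing the non-K\"ahler correction term.
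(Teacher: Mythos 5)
Your proposal is correct and follows essentially the same route as the paper: the paper's own proof simply observes that the hypotheses imply $\eth_{\Lambda^{p,0}+}\sigma^{pq}\in H^s(X_+)$ and $\cR^{\prime}_{\Lambda^{p,0}+}[\sigma^{pq}]_{bX_+}=0,$ and then invokes Theorem 3 of~\cite{Epstein3}. Your explicit bookkeeping with the block form~\eqref{7.27.1} and the interpolation step absorbing the zeroth-order term $\cM_0\sigma^{pq}$ are precisely the details the paper leaves implicit.
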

\begin{remark} As noted in~\cite{Epstein3}, the hypotheses of the proposition
  imply that $\sigma^{pq}$ has a well defined restriction to $bX_+$ as an
  $L^2$-section of $\Lambda^{pq}\restrictedto_{bX_+}.$ The boundary conditions
  in~\eqref{7.27.4} can therefore be interpreted in the sense of distributions. If
  $s=0$ then the norm on the left hand side of~\eqref{7.27.3} can be replaced
  by the slightly stronger $H_{\hn}$-norm.
\end{remark}
\begin{proof} These estimates follow immediately from Theorem 3
  in~\cite{Epstein3} by observing that the hypotheses imply that
\begin{equation}
\begin{split}
&\eth_{\Lambda^{p,0}+}\sigma^{pq}\in H^s(X_+)\text{ and }\\
&\cR^{\prime}_{\Lambda^{p,0}+}[\sigma^{pq}]_{bX_+}=0.
\end{split}
\end{equation}
\end{proof}

These estimates show that, for all $0\leq p,q\leq n,$ the form domain for
$\bcQ^{p,q}_{\cR_{+}},$ the closure of $\cQ^{p,q}_{\cR_{+}},$ lies in
$H_{\hn}(X_+;\Lambda^{p,q}).$ This implies that the self adjoint operator,
$\square^{p,q}_{\cR_+},$ defined by the Friedrichs extension process, has a
compact resolvent and therefore a finite dimensional null space
$\cH^{p,q}_{\cR_+}(X_+).$ We define closed, unbounded operators on
$L^2(X_+;\Lambda^{p,q})$ denoted $\dbar^{p,q}_{\cR_+}$ and $[
\dbar^{p,q-1}_{\cR_+}]^*$ as the graph closures of $\dbar$ and $\dbar^*$ acting
on smooth sections with domains given by the appropriate condition
in~\eqref{eqn04.1},~\eqref{eqn04.2}.  The domains of these operators are
denoted $\Dom_{L^2}(\dbar^{p,q}_{\cR_+}),
\Dom_{L^2}([\dbar^{p,q-1}_{\cR_+}]^*),$ respectively. It is clear that
$$\Dom(\bcQ^{p,q}_{\cR_{+}})=\Dom_{L^2}(\dbar^{p,q}_{\cR_+})\cap
\Dom_{L^2}([\dbar^{p,q-1}_{\cR_+}]^*).$$

\section{Subelliptic boundary conditions for pseudoconcave manifolds}\label{s.pscncv}

We now repeat the considerations of the previous section for $X_-,$ a strictly
pseudoconcave manifold. In this case the $\dbar$-Neumann condition fails to
define a subelliptic boundary value problem on sections of $\Lambda^{p,n-1}.$
We let $\bcS_p$ denote an orthogonal projection onto the nullspace of
$[\dbarb^{p(n-1)}]^*.$ The projector acts on sections of
$\Lambda_b^{p(n-1)}.$ From this observation, and equation~\eqref{eqn04.17}, it
follows immediately that
\begin{equation}
\bcS_p=\starb\cS_{n-p}\starb.
\label{04.5}
\end{equation}
If instead we let $\cS_{n-p}'$ denote a generalized Szeg\H o projector acting
on $(n-p,0)$-forms, then~\eqref{04.5}, with $\cS_{n-p}$ replaced by
$\cS_{n-p}',$ defines a generalized conjugate Szeg\H o projector acting on
$(p,n-1)$-forms, $\bcS'_p.$

Recall that the defining function, $\rho,$ is positive on the interior of
$X_-.$ We now define a modified $\dbar$-Neumann condition for $X_-,$ which we
denote by $\cR_-^{\prime}.$ The $\Dom(\dbar^{p,q}_{\cR_-^{\prime}})$ requires
no boundary condition for $q\neq n-1$ and is specified for $q=n-1$ by
\begin{equation}
\bcS_p'\bsigma^{p(n-1)}=0.
\label{04.6}
\end{equation}
The $\Dom([\dbar^{p,q}_{\cR_-^{\prime}}]^*)$ is given by
\begin{eqnarray}
\dbar\rho\rfloor\sigma^{pq}&=&0\quad\text{ if } q\neq n\label{04.71}\\
(\Id-\bcS_p')(\dbar\rho\rfloor\sigma^{pn})_b&=&0
\label{04.7}
\end{eqnarray}

As before we assemble the individual boundary conditions into a boundary
condition for $\eth_-.$ The boundary condition is expressed as a projection
operator acting on $\sigma\restrictedto_{bX_-}.$ We write
\begin{equation}
\begin{split}
\sigma\restrictedto_{bX_-}=\bsigma&+\dbar\rho\wedge\nsigma,\text{ with }\\
\bsigma=(\tilde\bsigma^{p},\bsigma^{p(n-1)})&\text{ and }
\nsigma=(\tilde\nsigma^{p},\nsigma^{pn}).
\end{split}
\end{equation}
Recall that $\bsigma^{pn}$ and $\nsigma^{p0}$ always vanish. With this notation
we have, in block form that
\begin{equation}
\cR_-^{\prime}\sigma\restrictedto_{bX_-}=
\left(\begin{matrix}\begin{matrix}\bzero & 0 \\
0 &  \bcS_p' \\
\end{matrix} &
\begin{matrix} 0 & 0\\
0&\bzero
\end{matrix}\\
\begin{matrix} 0 & 0\\
0&\bzero
\end{matrix}&  
\begin{matrix}\Id & 0 \\
0 & \Id-\bcS_p'\\
\end{matrix} \end{matrix}\right)
\left(\begin{matrix} \tilde\bsigma^{p}\\
\bsigma^{p(n-1)}\\
\tilde\nsigma^{p}\\
\nsigma^{pn}\end{matrix}\right)
\label{7.27.6}
\end{equation}
Here $\bzero$ denotes an $(n-1)\times (n-1)$ matrix of zeros. The boundary
condition for $\eth_-$ is $\cR_-'\sigma\restrictedto_{bX_-}=0.$ These can of
course be split into boundary conditions for $\eth_-^{\eo},$ which we denote by
$\cR_-^{\prime\eo}.$ The formal adjoint of $(\eth_-^{\even},\cR_-^{\prime\even})$ is
$(\eth_-^{\odd},\cR_-^{\prime\odd}).$ In Section~\ref{s.agrdyn} we show that the
$L^2$-adjoint of $(\eth_-^{\eo},\cR_-^{\prime\eo})$ is the graph closure of
$(\eth_-^{\ooee},\cR_-^{\prime\ooee}).$ When the distinction is important, we
explicitly indicate the dependence on $p$ by using $\cR_{p-}'$ to denote this
projector acting on sections of $\oplus_q\Lambda^{p,q}\restrictedto_{bX_-}$ and
$\eth_{p-}$ to denote the operator acting on sections of
$\oplus_q\Lambda^{p,q}.$ If we are using the classical conjugate Szeg\H o
projector, then we omit the prime, i.e., the notation $\cR_{-}$  refers to the
boundary condition defined by the matrix in~\eqref{7.27.6} with $\bcS_p'=\bcS_p,$ the
classical conjugate Szeg\H o projector.

Theorem 3 in~\cite{Epstein3} also provides subelliptic estimates in this case.
\begin{proposition} Suppose that $X$ is a strictly pseudoconcave manifold,
  $\bcS_p'$ is a generalized Szeg\H o projector acting on sections of
  $\Lambda^{p,n-1}_b,$ and let $s\in [0,\infty).$ There is a constant $C_s$ such
  that if $\sigma^{pq}$ is an
  $L^2$-section of $\Lambda^{p,q}$ with $\dbar\sigma^{pq},
  \dbar^*\sigma^{pq}\in H^s$ and 
\begin{equation}
\begin{split}
&\bcS_p'[\sigma^{pq}]_b=0\quad\text{ if } q=n-1\\
&(\Id-\bcS_p')[\dbnc\sigma^{pq}]_b=0\quad\text{ if } q=n\\
&[\dbnc\sigma^{pq}]_{bX_-}=0\quad\text{ if } q\neq n-1, n,
\end{split}
\label{7.27.7}
\end{equation}
then
\begin{equation}
\|\sigma^{pq}\|_{H^{s+\ha}}\leq
C_s[\|\dbar\sigma^{pq}\|_{H^s}+\|\dbar^*\sigma^{pq}\|_{H^s}+\|\sigma^{pq}\|_{L^2}]
\label{7.27.8}
\end{equation}
\end{proposition}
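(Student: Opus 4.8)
The plan is to reduce this per-degree estimate to the estimate for the full chiral operator $\eth_-$ carrying the assembled boundary condition $\cR_-'$, exactly as in the preceding pseudoconvex Proposition: all of the genuine analysis is supplied by Theorem~3 of~\cite{Epstein3}, and what remains here is the bookkeeping needed to identify the hypotheses~\eqref{7.27.7} with the projection condition~\eqref{7.27.6}. The structural input is that, because $X_-$ is a complex manifold, the twisted Spin${}_{\bbC}$-Dirac operator on $\Lambda^{p,*}$ is $\eth_{\Lambda^{p,0}-}=\dbar+\dbar^*+\cM_0$ by~\eqref{7.28.1}, where $\cM_0$ is a zeroth-order homomorphism that vanishes in the K\"ahler case.

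First I would show the hypotheses force $\eth_{\Lambda^{p,0}-}\sigma^{pq}\in H^s(X_-)$. Since $\dbar\sigma^{pq}$ and $\dbar^*\sigma^{pq}$ lie in $H^s$ by assumption, the only term to control is $\cM_0\sigma^{pq}$. When $g$ is K\"ahler this is absent; in general $\cM_0$ is bounded on every $H^{s'}$, so one starts from the $s'=0$ case—where $\|\cM_0\sigma^{pq}\|_{L^2}\le C\|\sigma^{pq}\|_{L^2}$ is harmless—and bootstraps upward in steps of $\tfrac12$, each application of the estimate below promoting $\sigma^{pq}$ from $H^{s'}$ to $H^{s'+\ha}$ until the target exponent $s$ is reached.

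The central step is to verify that, for a section concentrated in a single degree $q$, the three alternatives in~\eqref{7.27.7} are precisely the statement $\cR_-'[\sigma^{pq}]_{bX_-}=0$. I would unwind the block form~\eqref{7.27.6} against the decomposition $\sigma^{pq}\restrictedto_{bX_-}=\bsigma^{pq}+\dbar\rho\wedge\nsigma^{pq}$ with $\nsigma^{pq}=\dbar\rho\rfloor\sigma^{pq}$. The tangential piece $\bsigma^{pq}$ meets the nontrivial block $\bcS_p'$ only at $q=n-1$ and lands in the $\bzero$ block otherwise; the normal piece $\nsigma^{pq}$ meets the identity block for $1\le q\le n-1$ and the block $\Id-\bcS_p'$ at $q=n$, while $\nsigma^{p0}$ vanishes automatically. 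I expect this matching to be the one place demanding care: at the borderline degree $q=n-1$ the projector $\cR_-'$ in fact enforces \emph{both} the tangential condition $\bcS_p'\bsigma^{p(n-1)}=0$ coming from~\eqref{04.6} and the $\dbar$-Neumann condition $\dbar\rho\rfloor\sigma^{p(n-1)}=0$ coming from~\eqref{04.71}, so~\eqref{7.27.7} must be read as imposing the latter at that degree as well (unlike the pseudoconvex case, where the special tangential degree $q=0$ has automatically vanishing normal part). With the matching in hand one has $\cR_{\Lambda^{p,0}-}'[\sigma^{pq}]_{bX_-}=0$.

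Finally, having arranged $\eth_{\Lambda^{p,0}-}\sigma^{pq}\in H^s$ together with $\cR_{\Lambda^{p,0}-}'[\sigma^{pq}]_{bX_-}=0$, I would apply Theorem~3 of~\cite{Epstein3} to the pair $(\eth_{\Lambda^{p,0}-},\cR_-')$ to obtain
\[
\|\sigma^{pq}\|_{H^{s+\ha}}\le C_s\bigl(\|\eth_{\Lambda^{p,0}-}\sigma^{pq}\|_{H^s}+\|\sigma^{pq}\|_{L^2}\bigr),
\]
and then estimate $\|\eth_{\Lambda^{p,0}-}\sigma^{pq}\|_{H^s}\le\|\dbar\sigma^{pq}\|_{H^s}+\|\dbar^*\sigma^{pq}\|_{H^s}+\|\cM_0\sigma^{pq}\|_{H^s}$, absorbing the last term through the bootstrap above to arrive at~\eqref{7.27.8}. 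The real subelliptic difficulty—the estimate for the extended Heisenberg boundary problem defined by $\cR_-'$—is entirely internal to the cited Theorem~3; the content here is only to repackage that estimate one form degree at a time.
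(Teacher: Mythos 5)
Your proposal is correct and follows the paper's own proof exactly: the paper, too, disposes of this proposition by observing that the hypotheses imply $\eth_{\Lambda^{p,0}-}\sigma^{pq}\in H^s(X_-)$ and $\cR^{\prime}_{\Lambda^{p,0}-}[\sigma^{pq}]_{bX_-}=0,$ and then invoking Theorem~3 of~\cite{Epstein3}. Your two refinements go beyond what the paper writes down, but in the right direction: the block form~\eqref{7.27.6} does force $[\dbnc\sigma^{pq}]_b=0$ at the borderline degree $q=n-1$ (so the literal hypothesis list~\eqref{7.27.7} omits a condition the reduction actually needs, unlike the pseudoconvex case where the special degrees do not overlap), and the bootstrap absorbing the zeroth-order term $\cM_0$ is needed precisely when the metric is not K\"ahler, a point the paper's one-line proof leaves implicit.
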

\begin{proof} The hypotheses imply that
\begin{equation}
\begin{split}
&\eth_{\Lambda^{p,0}-}\sigma^{pq}\in H^s(X_-)\text{ and }\\
&\cR^{\prime}_{\Lambda^{p,0}-}[\sigma^{pq}]_{bX_-}=0.
\end{split}
\end{equation}
Thus $\sigma^{pq}$ satisfies the hypotheses of Theorem 3 in~\cite{Epstein3}.
\end{proof}

\section{The dual boundary conditions}\label{s.dualbc}
In the two previous sections we have established the basic estimates for $L^2$
forms on $X_+$ (resp. $X_-$) that satisfy $\cR_+'$ (resp. $\cR_-'$). The
Hodge star operator defines isomorphisms
\begin{equation}
\star: L^2(X_{\pm};\oplus_q\Lambda^{p,q})\longrightarrow L^2(X_{\pm};\oplus_q
\Lambda^{n-p,n-q}).
\end{equation}
Under this isomorphism, a form satisfying
$\cR_{\pm}'\sigma\restrictedto_{bX_{\pm}}=0$ is carried to a form,
${}^{\star}\sigma,$ satisfying
$(\Id-\cR_{\mp}')\star\sigma\restrictedto_{bX_{\pm}}=0,$ and vice versa. Here
of course the generalized Szeg\H o and conjugate Szeg\H o projectors must be
related as in~\eqref{04.5}. In form degrees where $\cR_{\pm}'$ coincides with
the usual $\dbar$-Neumann conditions, this statement is proved
in~\cite{FollandKohn1}. In the degrees where the boundary condition has been
modified, it follows from the identities in~\eqref{strrel}
and~\eqref{04.5}. Applying Hodge star, we immediately deduce the basic
estimates for the dual boundary conditions, $\Id-\cR_{\mp}'.$
\begin{lemma} Suppose that $X_{+}$ is strictly pseudoconvex and
  $\sigma^{pq}\in L^2(X_+;\Lambda^{p,q}).$ For $s\in [0,\infty),$ there is a
  constant $C_s$ so that, if $\dbar\sigma^{pq},\dbar^*\sigma^{pq}\in
  H^s,$ and 
\begin{equation}
\begin{split}
&\sigma^{pq}_b=0\quad\text{ if }q<n-1\\
&(\Id-\bcS_{p}')\sigma^{pq}_b=0\quad\text{ if }q=n-1\\
&\bcS_p'(\dbar\rho\rfloor\sigma^{pq})_b=0\quad\text{ if }q=n,
\end{split}
\end{equation}
then
\begin{equation}
\|\sigma^{pq}\|_{H^{s+\ha}}\leq
C_s\left[\|\dbar\sigma^{pq}\|_{H^s}+\|\dbar^*\sigma^{pq}\|_{H^s}+
\|\sigma^{pq}\|_{L^2}^2\right].
\end{equation}
\end{lemma}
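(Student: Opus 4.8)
The plan is to obtain this lemma by transporting the pseudoconcave estimate of the preceding proposition through the Hodge star isomorphism, exactly as the surrounding text advertises. The key observation is that the boundary conditions appearing here on $X_+$ are precisely the duals, under $\star$, of the conditions $\cR_-'$ on a pseudoconcave manifold: the condition $\sigma^{pq}_b=0$ for $q<n-1$ is the dual $\dbar$-Neumann condition~\eqref{dldbrnc2}, while the modifications in degrees $q=n-1$ and $q=n$ involve the conjugate Szeg\H o projector $\bcS_p'$ in exactly the pattern that~\eqref{04.5} and the intertwining relations~\eqref{strrel} convert into the $\cR_-'$ pattern~\eqref{7.27.7}. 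So the first step is to set $\tau^{n-p,n-q}=\star\sigma^{pq}$ and check that $\tau$ satisfies the hypotheses~\eqref{7.27.7} of the pseudoconcave proposition on the (now pseudoconcave) manifold obtained by viewing $X_+$ with reversed defining function.

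First I would translate each of the three boundary conditions. Using $(\dbar\rho\rfloor\sigma)^{\starb}=(\sigma^{\star})_b$ and $\dbar\rho\rfloor(\sigma^\star)=\bsigma^{\starb}$ from~\eqref{strrel}, the condition $\sigma^{pq}_b=0$ becomes $\dbar\rho\rfloor\tau=0$ on the dual form degree, which is the no-modification $\dbar$-Neumann constraint in~\eqref{7.27.7} for the degrees $q\neq n-1,n$; the two modified conditions, after applying $\star$ together with the relation $\bcS_p'=\starb\cS_{n-p}'\starb$ from~\eqref{04.5}, turn into the $\bcS'$-conditions in~\eqref{7.27.7} in the appropriate complementary degrees. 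Second, I would invoke the fact that $\star$ intertwines $\dbar$ and $\dbar^*$ up to sign (from~\eqref{04.20}, $\dbar^*=-\star\dbar\star$), so that $\|\dbar\sigma\|_{H^s}$ and $\|\dbar^*\sigma\|_{H^s}$ are, up to the bounded map $\star$, interchanged and hence controlled by $\|\dbar\tau\|_{H^s}+\|\dbar^*\tau\|_{H^s}$. Third, since $\star$ is an isometry on every Sobolev scale $H^{s}$ (it is an algebraic bundle isomorphism with smooth coefficients, commuting with the relevant norms up to the fixed geometry constant $C$), the estimate~\eqref{7.27.8} for $\tau$ pushes back to the claimed estimate~\eqref{7.27.3}-type bound for $\sigma^{pq}$.

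The one genuinely delicate point is the bookkeeping of \emph{which} form degree lands where and whether the pseudoconcave proposition actually applies: $X_+$ is pseudoconvex, but reversing the sign of $\rho$ makes $\partial\dbar\rho$ negative definite, which is exactly the pseudoconcave hypothesis needed to quote the second proposition. I would make this reversal explicit and confirm that the degree $q=n-1$ modified condition on $X_+$ corresponds under $p\mapsto n-p,\ q\mapsto n-q$ to the $q=1$-type excluded degree on the pseudoconcave side — this is the degree where the plain $\dbar$-Neumann estimate fails and the Szeg\H o modification is required, matching the lemma's hypotheses. The hard part will therefore not be any analytic estimate (those are imported wholesale) but rather verifying that the three-way case split in the lemma's hypotheses is carried by $\star$ onto the three-way split~\eqref{7.27.7} with no degree mismatch; once the relations~\eqref{strrel} and~\eqref{04.5} are applied term-by-term this is a mechanical but sign- and index-sensitive check, and it is the only place where an error could creep in.
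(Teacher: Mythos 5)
Your overall strategy --- transporting the estimate through the Hodge star using \eqref{strrel} and \eqref{04.5} --- is the right one (it is the paper's), but you invoke the wrong proposition, and the justification you give for doing so is false. The Hodge star maps forms on $X_+$ to forms on $X_+$; it does not change the underlying manifold, and no change in the sign convention for $\rho$ can turn the strictly pseudoconvex manifold $X_+$ into a strictly pseudoconcave one. Pseudoconvexity versus pseudoconcavity is determined by which side of the Levi-positive hypersurface the manifold lies on: the paper's pseudoconcave hypothesis is that $\pa\dbar\rho$ is \emph{positive} definite with $\rho>0$ on the interior, not that $\pa\dbar\rho$ is negative definite. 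Reversing the sign of $\rho$ on $X_+$ produces a defining function satisfying neither convention, and it certainly does not place you in the hypotheses of the pseudoconcave proposition (Theorem 3 of~\cite{Epstein3} on a pseudoconcave manifold). If pseudoconvexity could be toggled by a sign convention, the two propositions --- which have genuinely different excluded and modified degrees --- would be equivalent, and they are not.

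The degree bookkeeping, which you correctly single out as the delicate point, is exactly where your plan breaks. Setting $\tau^{n-p,n-q}=\star\sigma^{pq}$ and using \eqref{strrel} and \eqref{04.5}: the condition $\sigma^{pq}_b=0$ for $q<n-1$ becomes $[\dbnc\tau]_b=0$ in degrees $n-q>1$; the condition $(\Id-\bcS_p')\sigma^{p,n-1}_b=0$ becomes $(\Id-\cS_{n-p}')[\dbnc\tau^{n-p,1}]_b=0$; and $\bcS_p'(\dbnc\sigma^{pn})_b=0$ becomes $\cS_{n-p}'[\tau^{n-p,0}]_b=0$. These are precisely the \emph{pseudoconvex} conditions \eqref{7.27.4} (Szeg\H o modifications in degrees $0$ and $1$), not the pseudoconcave conditions \eqref{7.27.7} (modifications in degrees $n-1$ and $n$); your claim that the modified degrees land on the $\bcS'$-conditions of \eqref{7.27.7}, and that $q=n-1$ corresponds to the excluded degree $q'=1$ on the pseudoconcave side, is incorrect. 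The correct argument is therefore the paper's: apply $\star$, observe that $\tau$ satisfies \eqref{7.27.4} on the pseudoconvex manifold $X_+$ itself, quote the \emph{first} proposition together with $\dbar^*=-\star\dbar\star$ and the boundedness of $\star$ on the Sobolev scale (Folland--Kohn covering the degrees where the conditions are the classical ones), and push the estimate back to $\sigma^{pq}$. As written, your proof attempts to verify hypotheses that $\tau$ does not satisfy, on a manifold that is not pseudoconcave, so the appeal to the second proposition fails.
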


\begin{lemma} Suppose that $X_{-}$ is strictly pseudoconcave and
  $\sigma^{pq}\in L^2(X_-;\Lambda^{p,q}).$  For $s\in [0,\infty),$ there is a
  constant $C_s$ so that, if $\dbar\sigma^{pq},\dbar^*\sigma^{pq}\in
  H^s,$ and
\begin{equation}
\begin{split}
&\sigma^{pq}_b=0\quad\text{ if }q>1\\
&\cS_{p}'(\dbnc\sigma^{pq})_b=0\text{ and }\sigma^{pq}_b=0\quad\text{ if }q=1\\
&(\Id-\cS_p')\sigma^{pq}_b=0\quad\text{ if }q=0,
\end{split}
\end{equation}
then
\begin{equation}
\|\sigma^{pq}\|_{H^{s+\ha}}\leq
C_s\left[\|\dbar\sigma^{pq}\|_{H^s}+\|\dbar^*\sigma^{pq}\|_{H^s}+
\|\sigma^{pq}\|_{L^2}^2\right].
\end{equation}
\end{lemma}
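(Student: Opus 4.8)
The plan is to derive this estimate from the subelliptic estimate \eqref{7.27.8} for the condition $\cR_-'$ on $X_-$ by conjugating with the Hodge star operator. Given a form $\sigma^{pq}$ on the pseudoconcave manifold $X_-$ satisfying the hypotheses, set $\tau=\star\sigma^{pq}$, a section of $\Lambda^{n-p,n-q}$ over the \emph{same} manifold $X_-$. Since $\star$ is a pointwise bundle isometry and a smooth bundle isomorphism of order zero, it preserves each Sobolev norm up to a geometric constant, so $\|\tau\|_{H^{t}}$ and $\|\sigma^{pq}\|_{H^{t}}$ are equivalent for every $t$. Moreover \eqref{04.20} gives $\dbar^*=-\star\dbar\star$ and $\star\star=(-1)^{p+q}$, whence $\dbar\tau=\pm\star\dbar^*\sigma^{pq}$ and $\dbar^*\tau=\pm\star\dbar\sigma^{pq}$; thus $\|\dbar\tau\|_{H^s}$ and $\|\dbar^*\sigma^{pq}\|_{H^s}$ are equivalent, as are $\|\dbar^*\tau\|_{H^s}$ and $\|\dbar\sigma^{pq}\|_{H^s}$. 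In particular the right-hand side of \eqref{7.27.8} applied to $\tau$ agrees, up to a constant, with the right-hand side of the asserted estimate for $\sigma^{pq}$.

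It remains to verify that $\tau$ satisfies the hypotheses \eqref{7.27.7} in degree $n-q$. The boundary data transform according to \eqref{strrel}, namely $\tau_b=(\dbnc\sigma^{pq})^{\starb}$ and $\dbnc\tau=(\sigma^{pq}_b)^{\starb}$, together with the identity $\bcS'_{n-p}=\starb\cS'_p\starb$ coming from \eqref{04.5} (with $\cS_{n-p}$ replaced by the generalized projector $\cS'_p$). Because $\starb$ is invertible and $\starb\starb=\pm\Id$, each hypothesis on $\sigma^{pq}$ passes to the corresponding hypothesis on $\tau$: the dual $\dbar$-Neumann condition $\sigma^{pq}_b=0$, imposed for all $q\geq 1$, becomes $\dbnc\tau=0$, which is the condition of \eqref{7.27.7} in the degrees $n-q\neq n-1,n$ and, at $q=1$, supplies the $\dbar^*$-domain part in degree $n-1$; the modified condition $\cS'_p(\dbnc\sigma^{pq})_b=0$ at $q=1$ becomes $\bcS'_{n-p}\tau_b=0$ in degree $n-1$; and the modified condition $(\Id-\cS'_p)\sigma^{pq}_b=0$ at $q=0$ becomes $(\Id-\bcS'_{n-p})(\dbnc\tau)_b=0$ in degree $n$. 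Hence $\tau$ satisfies \eqref{7.27.7}. In the unmodified degrees this is the classical correspondence of \cite{FollandKohn1}; the two modified degrees are precisely the content of the computation just indicated.

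With $\tau$ shown to satisfy \eqref{7.27.7}, the pseudoconcave estimate \eqref{7.27.8} gives $\|\tau\|_{H^{s+\ha}}\leq C_s[\|\dbar\tau\|_{H^s}+\|\dbar^*\tau\|_{H^s}+\|\tau\|_{L^2}]$. Substituting the norm equivalences of the first paragraph and rewriting $\tau=\star\sigma^{pq}$ yields the asserted estimate for $\sigma^{pq}$. The companion (pseudoconvex) Lemma is obtained in the same manner, replacing \eqref{7.27.8} by \eqref{7.27.3} and interchanging the roles of $\cS'_p$ and $\bcS'_p$.

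The only delicate point is the degree bookkeeping of the second paragraph: one must check that the extra tangential vanishing $\sigma^{p1}_b=0$ forced at $q=1$ is consistent with—indeed yields—the $\dbar^*$-domain condition $\dbnc\tau=0$ in degree $n-1$, and that the signs produced by $\starb\starb=\pm\Id$ do not disturb the identifications $\bcS'_{n-p}\tau_b=0$ and $(\Id-\bcS'_{n-p})(\dbnc\tau)_b=0$. Both are immediate from \eqref{strrel} and \eqref{04.5}, so there is no substantive analytic obstacle beyond this bookkeeping; the analytic content of the Lemma is carried entirely by the already-established estimate \eqref{7.27.8} and the self-duality of $\star$.
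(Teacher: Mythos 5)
Your proposal is correct and follows essentially the same route as the paper: the paper's proof of this lemma consists precisely of applying the Hodge star, noting that a form satisfying $(\Id-\cR_+')$-type conditions on $X_-$ is carried to one satisfying the $\cR_-'$-type conditions~\eqref{7.27.7} (via the classical correspondence of~\cite{FollandKohn1} in the unmodified degrees and the identities~\eqref{strrel},~\eqref{04.5} in the modified ones), and then invoking the already-established estimate~\eqref{7.27.8}. Your extra bookkeeping---the harmless additional condition $\dbnc\tau=0$ in degree $n-1$ arising from $\sigma^{p1}_b=0$, and the replacement $p\mapsto n-p$ in the projector identity---is exactly the verification the paper leaves implicit.
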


\section{Hodge decompositions}\label{s.highnorm}
The basic analytic ingredient that is needed to proceed is the higher norm
estimates for the $\square$-operator. Because the boundary conditions
$\cR_{\pm}'$ are nonlocal, the standard elliptic regularization and
approximation arguments employed, e.g., by Folland and Kohn do not directly
apply. Instead of trying to adapt these results and treat each degree $(p,q)$
separately, we instead consider the operators $\eth_{\pm}^{\eo}$ with boundary
conditions defined by $\cR^{\prime\eo}_{\pm}.$ In~\cite{Epstein3} we use a boundary layer
technique to obtain estimates for the inverses of the operators
$[\eth_{\pm}^{\eo}]^*\eth_{\pm}^{\eo}+\mu^2.$ On a K\"ahler manifold the
operators $[\eth_{\pm}^{\eo}]^*\eth_{\pm}^{\eo}$ preserve form degree, which
leads to estimates for the inverses of $\square^{p,q}_{\cR_{\pm}}+\mu^2.$ For
our purposes the following consequence of Corollary 3 in~\cite{Epstein3}
suffices.
\begin{theorem}\label{thm1} Suppose that $X_{\pm}$ is a strictly pseudoconvex
  (pseudoconcave) compact, complex K\"ahler manifold with boundary. Fix
  $\mu>0,$ and $s\geq 0.$ There is a positive constant $C_s$ such that for
  $\beta\in H^s(X_{\pm};\Lambda^{p,q}),$ there exists a unique section
  $\alpha\in H^{s+1}(X_{\pm};\Lambda^{p,q})$ satisfying
  $[\square^{p,q}+\mu^2]\alpha=\beta$ with
\begin{equation}
\alpha\in\Dom(\dbar^{p,q}_{\cR_{\pm}'})\cap\Dom([\dbar^{p,q-1}_{\cR_{\pm}'}]^*)\text{
  and }
\dbar\alpha\in\Dom([\dbar^{p,q}_{\cR_{\pm}'}]^*),\,
\dbar^*\alpha\Dom(\dbar^{p,q-1}_{\cR_{\pm}'})
\label{eqn04.21}
\end{equation}
such that
\begin{equation}
\|\alpha\|_{H^{s+1}}\leq C_s\|\beta\|_{H^s}
\end{equation}
\end{theorem}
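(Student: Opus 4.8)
The plan is to derive this from Corollary~3 of~\cite{Epstein3}, which supplies exactly the existence, uniqueness, and $H^{s+1}$-estimate for the chiral first-order operators $[\eth_{\pm}^{\eo}]^*\eth_{\pm}^{\eo}+\mu^2$ subject to the boundary conditions $\cR_{\pm}^{\prime\eo}$, and then to split the resulting statement into individual bidegrees. The K\"ahler hypothesis is what makes this possible: on a K\"ahler manifold the zeroth-order term $\cM_0$ in~\eqref{7.28.1} vanishes, so $\eth_{\pm}^{\eo}=\dbar+\dbar^*$, and since $\dbar^2=0=(\dbar^*)^2$ the composition $[\eth_{\pm}^{\eo}]^*\eth_{\pm}^{\eo}$ preserves bidegree, acting on the summand $\Lambda^{p,q}$ precisely as $\square^{p,q}=\dbar\dbar^*+\dbar^*\dbar$. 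Thus $[\eth_{\pm}^{\eo}]^*\eth_{\pm}^{\eo}+\mu^2$ is the orthogonal direct sum over $q$ of the operators $\square^{p,q}+\mu^2$, and I expect this splitting to persist at the level of domains.

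First I would set up the dictionary between the two domains. Membership of a smooth $\alpha$ in the domain of $[\eth_{\pm}^{\eo}]^*\eth_{\pm}^{\eo}$ underlying Corollary~3 means that $\alpha$ satisfies $\cR_{\pm}^{\prime\eo}$ and that $\eth_{\pm}^{\eo}\alpha$ satisfies the complementary condition $\cR_{\pm}^{\prime\ooee}$. Reading $\cR_{\pm}'$ off its block form in~\eqref{7.27.1} (resp.~\eqref{7.27.6}), writing $\eth_{\pm}^{\eo}\alpha=\dbar\alpha+\dbar^*\alpha$, and projecting everything onto a single bidegree $(p,q)$, these two conditions unpack termwise into exactly the four conditions of~\eqref{eqn04.21}: the condition on $\alpha$ itself gives $\alpha\in\Dom(\dbar^{p,q}_{\cR_{\pm}'})\cap\Dom([\dbar^{p,q-1}_{\cR_{\pm}'}]^*)$, while the condition on $\eth_{\pm}^{\eo}\alpha$ gives $\dbar\alpha\in\Dom([\dbar^{p,q}_{\cR_{\pm}'}]^*)$ and $\dbar^*\alpha\in\Dom(\dbar^{p,q-1}_{\cR_{\pm}'})$.

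With the dictionary in hand the deduction is short. Given $\beta\in H^s(X_{\pm};\Lambda^{p,q})$, view it as a section of $\Lambda^{p,\even}$ or $\Lambda^{p,\odd}$ according to the parity of $q$ and invoke Corollary~3 to obtain a unique $\alpha\in H^{s+1}$ in the domain described above with $([\eth_{\pm}^{\eo}]^*\eth_{\pm}^{\eo}+\mu^2)\alpha=\beta$ and $\|\alpha\|_{H^{s+1}}\leq C_s\|\beta\|_{H^s}$. Since the operator preserves bidegree and $\beta$ is of pure bidegree $(p,q)$, uniqueness forces $\alpha$ to be of pure bidegree $(p,q)$; there the equation becomes $(\square^{p,q}+\mu^2)\alpha=\beta$ and the domain conditions reduce to~\eqref{eqn04.21}. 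Uniqueness within the stated class follows either from the uniqueness clause of Corollary~3 or, independently, from positivity of $\square^{p,q}+\mu^2$ on the form domain of $\bcQ^{p,q}_{\cR_{\pm}}$, which is contained in $H_{\hn}$ by the subelliptic estimates of Sections~\ref{s.pscnvx} and~\ref{s.pscncv}.

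The main obstacle is the termwise matching of the second paragraph. One must check that the \emph{single} projection $\cR_{\pm}^{\prime\eo}$, together with the complementary condition it forces on $\eth_{\pm}^{\eo}\alpha$, really does reproduce~\eqref{eqn04.21} in \emph{every} bidegree---in particular in the transitional degrees $q=0,1$ for $X_+$ (resp. $q=n-1,n$ for $X_-$), where the $\dbar$-Neumann condition is altered by the nonlocal blocks $\cS_p'$ (resp. $\bcS_p'$) and their complements $\Id-\cS_p'$ (resp. $\Id-\bcS_p'$), the two being tied together through~\eqref{04.5}. Once this bookkeeping is verified, everything else is a direct transcription of Corollary~3 of~\cite{Epstein3}, the K\"ahler hypothesis being precisely what licenses the passage from the chiral first-order problem to the bidegree-by-bidegree second-order problem.
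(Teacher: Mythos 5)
Your proposal is correct and follows exactly the route the paper takes: the paper itself presents Theorem~\ref{thm1} as a direct consequence of Corollary~3 of~\cite{Epstein3}, noting that on a K\"ahler manifold $[\eth_{\pm}^{\eo}]^*\eth_{\pm}^{\eo}$ preserves form degree, so the invertibility and $H^{s+1}$-estimates for $[\eth_{\pm}^{\eo}]^*\eth_{\pm}^{\eo}+\mu^2$ with boundary conditions $\cR_{\pm}^{\prime\eo}$ split into the statements for $\square^{p,q}_{\cR_{\pm}'}+\mu^2$ bidegree by bidegree. Your write-up in fact supplies more detail (the domain dictionary and the uniqueness-forces-pure-bidegree argument) than the paper records, but the underlying argument is the same.
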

The boundary conditions in~\eqref{eqn04.21} are in the sense of
distributions. If $s$ is sufficiently large, then we see that this boundary
value problem has a classical solution.

As in the classical case, these estimates imply that each operator
$\square^{p,q}_{\cR_{\pm}'}$ has a complete basis of eigenvectors composed of
smooth forms. Moreover the orthocomplement of the nullspace is the range.  This
implies that each operator has an associated Hodge decomposition. If
$G^{p,q}_{\cR_{\pm}'},\, H^{p,q}_{\cR_{\pm}'}$ are the partial inverse and
projector onto the nullspace, then we have that
\begin{equation}
\square^{p,q}_{\cR_{\pm}'}G^{p,q}_{\cR_{\pm}'}=
G^{p,q}_{\cR_{\pm}'}\square^{p,q}_{\cR_{\pm}'}=\Id-H^{p,q}_{\cR_{\pm}'}
\end{equation}

To get the usual and more useful Hodge decomposition, we use boundary conditions
defined by the classical Szeg\H o projectors. The basic property needed to
obtain these results is contained in the following two lemmas.
\begin{lemma}\label{lem10} If $\alpha\in\Dom_{L^2}(\dbar^{p,q}_{\cR_{\pm}}),$ then
  $\dbar\alpha\in\Dom_{L^2}(\dbar^{p,q+1}_{\cR_{\pm}}).$
\end{lemma}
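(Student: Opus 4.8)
The plan is to realize $\dbar\alpha$ as a graph limit of smooth forms lying in the pre-closure set that defines $\dbar^{p,q+1}_{\cR_\pm}$. By definition of the graph closure there are smooth $(p,q)$-forms $\alpha_n$ satisfying the degree-$q$ boundary condition with $\alpha_n\to\alpha$ and $\dbar\alpha_n\to\dbar\alpha$ in $L^2$. Each $\dbar\alpha_n$ is smooth and satisfies $\dbar(\dbar\alpha_n)=0$; hence, once I check that $\dbar\alpha_n$ satisfies the degree-$(q+1)$ boundary condition defining $\cR_\pm$, the sequence $\{\dbar\alpha_n\}$ witnesses $\dbar\alpha\in\Dom_{L^2}(\dbar^{p,q+1}_{\cR_\pm})$. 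The whole lemma thus reduces to verifying that $\dbar$ carries the relevant boundary condition at degree $q$ to the boundary condition at degree $q+1$.

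For $\cR_+$ the $\dbar$-domain is constrained only in degree $0$, by $\cS_p\sigma^{p0}_b=0$ in~\eqref{eqn04.1}; since $q\geq 0$ forces $q+1\geq 1$, the degree-$(q+1)$ condition is always vacuous and there is nothing to check. For $\cR_-$ the $\dbar$-domain is constrained only in degree $n-1$, by $\bcS_p\bsigma^{p(n-1)}=0$ in~\eqref{04.6}, so the single case requiring an argument is $q=n-2\to q+1=n-1$. There the source degree $n-2\neq n-1$ carries no condition, so $\alpha_n$ ranges over arbitrary smooth $(p,n-2)$-forms, and I must verify $\bcS_p(\dbar\alpha_n)_b=0$. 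By the last identity in~\eqref{strrel} the tangential part of $\dbar\alpha_n$ is $(\dbar\alpha_n)_b=\dbarb(\alpha_n)_b$, which lies in the range of $\dbarb:\Lambda^{p,n-2}_b\to\Lambda^{p,n-1}_b$. Since $\bcS_p$ is the orthogonal projector onto $\Ker[\dbarb^{p(n-1)}]^*$, and the range of $\dbarb$ is orthogonal to the kernel of its formal adjoint, I conclude $\bcS_p\dbarb(\alpha_n)_b=0$, which is exactly~\eqref{04.6} for $\dbar\alpha_n$.

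The one substantive ingredient is therefore the orthogonality $\range\dbarb\perp\Ker[\dbarb^{p(n-1)}]^*$, and this is also the reason the lemma is stated for the classical projectors $\cR_\pm$ and not the generalized $\cR_\pm'$: for a generalized conjugate Szeg\H o projector $\bcS_p'$ the relevant range is the kernel of a $\dbarb$-like operator $D_E$ rather than of $\dbarb^*$ itself, so the exact orthogonality with $\dbarb$ need not hold. I expect this orthogonality step to be the only real content; no regularity beyond the defining approximating sequences is needed, which is the point of routing the argument through graph limits rather than through the distributional boundary values of $\alpha$ itself.
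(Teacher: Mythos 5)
Your proposal is correct and follows essentially the same route as the paper: approximate $\alpha$ in the graph norm by smooth forms satisfying the boundary condition, note that the only nonvacuous case is $q=n-2$ for $\cR_-$, and use $(\dbar\alpha_n)_b=\dbarb(\alpha_n)_b$ together with $\bcS_p\dbarb=0$ (your orthogonality of $\range\dbarb$ with $\Ker[\dbarb^{p(n-1)}]^*$) to conclude via $\dbar^2=0$. Your closing observation is also consistent with the paper, whose remark extends the lemma only to $\cR'_+$, precisely because the generalized conjugate Szeg\H o projector on the pseudoconcave side need not annihilate $\range\dbarb$.
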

\begin{proof} The $L^2$-domain of $\dbar^{p,q}_{\cR_{\pm}}$ is  defined as the
  graph closure of smooth forms satisfying the appropriate boundary
  conditions, defined by~\eqref{eqn04.1} and~\eqref{04.6}. Hence, if
  $\alpha\in\Dom_{L^2}(\dbar^{p,q}_{\cR_{\pm}}),$ then 
  there is a sequence of smooth $(p,q)$-forms $<\alpha_n>$ such that
\begin{equation}
\lim_{n\to\infty}\|\dbar\alpha_n-\dbar\alpha\|_{L^2}+\|\alpha_n-\alpha\|_{L^2}=0,
\end{equation}
and each $\alpha_n$ satisfies the appropriate boundary condition. 
First we consider $\cR_+.$ If $q=0,$ then
$\cS_p(\alpha_n)_b=0.$ The operator $\dbar^{p,1}_{\cR_{+}}$ has no boundary
condition, so $\dbar\alpha_n$ belongs to $\Dom(\dbar^{p,1}_{\cR_{+}}).$ Since
$\dbar^2\alpha_n=0.$ we see that
$\dbar\alpha\in\Dom_{L^2}(\dbar^{p,1}_{\cR_{+}}).$ In all other cases
$\dbar^{p,q}_{\cR_{+}}$ has no boundary condition. 

We now turn to $\cR_-.$ In this case there is only a boundary condition if
$q=n-1,$ so we only need to consider
$\alpha\in\Dom_{L^2}(\dbar^{p,n-2}_{\cR_{-}}).$ Let $<\alpha_n>$ be smooth
forms converging to $\alpha$ in the graph norm. Because $\bcS_p\dbarb=0,$
it follows that
$$\bcS_p(\dbar\alpha_n)_b=\bcS_p(\dbarb(\alpha_n)_b)=0.$$
Hence $\dbar\alpha_n\in\Dom(\dbar^{p,n-1}_{\cR_-}).$ Again $\dbar^2\alpha_n=0$
implies that $\dbar\alpha\in\Dom_{L^2}(\dbar^{p,n-1}_{\cR_-}).$
\end{proof}
\begin{remark} The same argument applies to show that the lemma holds for the
  boundary condition defined by   $\cR^{\prime}_+.$
\end{remark}

We have a similar result for the adjoint. The domains of
$[\dbar^{p,q}_{\cR_{\pm}}]^*$ are defined as the graph closures of
$[\dbar^{p,q}]^*$ with boundary conditions defined
by~\eqref{eqn04.2},~\eqref{04.71} and~\eqref{04.7}.
\begin{lemma}\label{lem11} 
If $\alpha\in\Dom_{L^2}([\dbar^{p,q}_{\cR_{\pm}}]^*)$ then
  $\dbar^*\alpha\in\Dom_{L^2}([\dbar^{p,q-1}_{\cR_{\pm}}]^*).$
\end{lemma}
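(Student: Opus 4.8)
The plan is to mirror the proof of Lemma~\ref{lem10}, exploiting the duality between $\cR_\pm$ and its Hodge-star conjugate established in Section~\ref{s.dualbc}, but now working with $\dbar^*$ in place of $\dbar$. The key structural fact I would use is that the $L^2$-domain $\Dom_{L^2}([\dbar^{p,q}_{\cR_\pm}]^*)$ is by definition the graph closure of $\dbar^*$ acting on smooth forms satisfying the adjoint boundary conditions in~\eqref{eqn04.2},~\eqref{04.71},~\eqref{04.7}. So I would start from $\alpha\in\Dom_{L^2}([\dbar^{p,q}_{\cR_\pm}]^*)$, pick a sequence $\langle\alpha_n\rangle$ of smooth forms with $\|\alpha_n-\alpha\|_{L^2}+\|\dbar^*\alpha_n-\dbar^*\alpha\|_{L^2}\to 0$, each satisfying the appropriate adjoint boundary condition, and try to show that $\dbar^*\alpha_n$ satisfies the adjoint boundary condition one degree lower. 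Then $(\dbar^*)^2\alpha_n=0$ together with the graph closure definition would force $\dbar^*\alpha\in\Dom_{L^2}([\dbar^{p,q-1}_{\cR_\pm}]^*)$, exactly as in the previous lemma.

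\textbf{Case analysis for $\cR_+$.} Here the adjoint condition~\eqref{eqn04.2} is nontrivial only in degrees $q=1$ (the condition $(\Id-\cS_p)[\dbnc\sigma^{p1}]_b=0$) and $q>1$ (the condition $[\dbnc\sigma^{pq}]_b=0$). For a generic degree $q>2$, $\dbar^*\alpha_n$ lands in degree $q-1>1$, which still carries the plain condition $[\dbnc\sigma^{p,q-1}]_b=0$; I would verify this survives under $\dbar^*$ using the third relation in~\eqref{strrel}, namely $(\dbar\sigma)_b=\dbarb\sigma_b$, applied after passing through Hodge star via~\eqref{04.20} so that $\dbar^*=-\star\dbar\star$. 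The delicate degree is the transition into $q-1=1$, where I must produce the $(\Id-\cS_p)$-projected condition from the unprojected one in degree $2$; this is the dual analogue of the $\bcS_p\dbarb=0$ step in Lemma~\ref{lem10}, and should follow from $\dbarb^*\cS_p^{\,\perp}$-type relations once transported by $\starb$ using~\eqref{eqn04.17} and~\eqref{04.5}.

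\textbf{Case analysis for $\cR_-$} is symmetric: the adjoint conditions~\eqref{04.71},~\eqref{04.7} are nontrivial only near the top degree $q=n$, so I would only need to track how $\dbar^*$ moves a form from degree $q$ into degree $q-1$ across $q=n$ down to $q=n-1$, using $\bcS_p'=\starb\cS_{n-p}'\starb$ from~\eqref{04.5} and the second identity in~\eqref{strrel}. \textbf{The main obstacle} I anticipate is precisely the boundary-degree transitions where the projector $\cS_p$ (or $\bcS_p$) appears: showing that the half-projected condition is compatible with applying $\dbar^*$ requires the intertwining $\dbarb^*\bcS_p=0$ (equivalently $\cS_p\dbarb=0$ after conjugation), and I would need to confirm that this commutation holds at the level of boundary symbols in the extended Heisenberg calculus rather than just formally. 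The cleanest route is probably to avoid a direct computation entirely and instead invoke the Hodge-star duality of Section~\ref{s.dualbc}: apply $\star$ to reduce this statement for $[\dbar_{\cR_\pm}]^*$ to the already-proved Lemma~\ref{lem10} for $\dbar_{\cR_\mp}$ on the dual bundle, since $\star$ exchanges $\dbar$ with $\dbar^*$ (up to sign) and $\cR_\pm$ with $\Id-\cR_\mp$, carrying the domain statement for the adjoint operator directly onto the domain statement already established.
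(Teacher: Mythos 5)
Your primary route --- approximating sequences in the graph norm, degree-by-degree case analysis, and $[\dbar^*]^2=0$ --- is exactly the paper's proof, and you have located the delicate degrees correctly: for $\cR_+$ only the passage of a $(p,2)$-form into degree $1$ is non-classical, and for $\cR_-$ only the passage of a $(p,n)$-form into degree $n-1$. Two corrections, though. First, your worry that the projector intertwining must be verified ``at the level of boundary symbols in the extended Heisenberg calculus'' is unfounded: the lemma concerns $\cR_\pm$, built from the \emph{classical} projectors, and $\bcS_p$ is by definition the orthogonal $L^2$-projection onto $\Ker\dbarb^*$, so $\dbarb^*\bcS_p=0$ holds exactly, not just symbolically. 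That identity is precisely what the paper uses on the pseudoconcave side: the condition $(\Id-\bcS_p)(\dbnc\alpha_n)_b=0$ puts $(\dbnc\alpha_n)_b$ in $\Im\bcS_p$, whence $\dbnc\dbar^*\alpha_n=\dbarb^*(\dbnc\alpha_n)_b=0$ by the identities in \eqref{strrel}. On the pseudoconvex side no projector relation is needed at all: $\dbnc\alpha_n=0$ forces $({}^{\star}\alpha_n)_b=0$ by \eqref{strrel}, hence $(\dbar{}^{\star}\alpha_n)_b=\dbarb({}^{\star}\alpha_n)_b=0$, hence $[\dbnc\dbar^*\alpha_n]_b=0$ outright --- stronger than the projected condition you set out to produce, and not obtainable from intertwining relations alone. (This asymmetry explains why the paper's remark extends the lemma to $\cR'_+$ but not to $\cR'_-$: for a generalized projector the relation $\dbarb^*\bcS_p'=0$ genuinely fails, while the $\cR_+$ computation never invokes it.)

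Second, the route you call cleanest --- reducing to Lemma~\ref{lem10} by Hodge star --- does not close as stated, and this is a genuine gap. You correctly record that $\star$ exchanges $\cR_\pm$ with $\Id-\cR_\mp$ (on the \emph{same} manifold $X_\pm$), but you then invoke ``the already-proved Lemma~\ref{lem10} for $\dbar_{\cR_\mp}$.'' What the reduction actually requires is the analogue of Lemma~\ref{lem10} for the \emph{dual} boundary condition $\Id-\cR_\mp$ on $X_\pm$, i.e., that $\Dom_{L^2}(\dbar_{\Id-\cR_\mp})$ is preserved by $\dbar$. That statement appears nowhere in the paper (the dual decompositions are explicitly left to the reader), and it does not follow from Lemma~\ref{lem10} itself, since $\Id-\cR_\mp$ and $\cR_\pm$ impose quite different conditions: $\sigma_b=0$ in most degrees for the former versus $\dbnc\sigma=0$ in most degrees for the latter. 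The dual analogue is true, and provable by the same direct method, so your shortcut can be repaired; but as written it trades the lemma for an equally unproved statement rather than for something already established. The direct computation is the argument to keep.
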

\begin{proof} Let $\alpha\in\Dom_{L^2}([\dbar^{p,q}_{\cR_{\pm}}]^*).$ As before
  there is a sequence $<\alpha_n>$ of smooth forms in
  $\Dom([\dbar^{p,q}_{\cR_{\pm}}]^*),$ converging to $\alpha$ in the graph norm.
We need to consider the individual cases. We begin with $\cR_+.$  The only case
  that is not classical is that of $q=1.$ We suppose that $<\alpha_n>$ is a
  sequence of forms in $\CI(\bX_+;\Lambda^{p,2})$ with $\dbnc\alpha_n=0.$ Using
  the identities in~\eqref{strrel} we see that
\begin{equation}
[\dbnc\dbar^*\alpha_n]_b=[(\dbar^{\star}\alpha_n)_b]^{\starb}.
\end{equation}
On the other hand, as $(\dbnc\alpha_n)_b=0$ it follows that $({}^{\star}\alpha_n)_b=0$
and therefore 
$$(\dbar^{\star}\alpha_n)_b=\dbarb({}^{\star}\alpha_n)_b=0.$$
This shows that $(\Id-\cS_p)\dbnc\dbar^*\alpha_n=0$ and therefore
$\dbar^*\alpha_n$ is in the domain of $[\dbar^{p,0}_{\cR_+}]^*.$ As
$[\dbar^*]^2=0$ this shows that
$\dbar^*\alpha\in\Dom_{L^2}([\dbar^{p,0}_{\cR_{+}}]^*).$
 
On the pseudoconcave side we only need to consider $q=n-1.$ The boundary
condition implies that $\dbarb^*(\dbnc\alpha_n)_b=0.$ Using the identities
in~\eqref{strrel} we see that
\begin{equation}
\dbnc\dbar^*\alpha_n={}^{\starb}(\dbar^\star\alpha_n)_b=
\dbarb^*(\dbnc\alpha_n)_b=0.
\end{equation}
Thus $\dbar^*\alpha_n\in\Dom([\dbar^{p,n-2}_{\cR_-}]^*).$
\end{proof}
\begin{remark} Again, the same argument applies to show that the lemma holds for the
  boundary condition defined by   $\cR^{\prime}_+.$
\end{remark}

These lemmas show that, in the sense of closed operators, $\dbar_{\cR_{\pm}}^2$
and $[\dbar^*_{\cR_{\pm}}]^2$  vanish. This, along with the higher norm
estimates, give the strong form of the Hodge decomposition, as well as the
important commutativity results,~\eqref{7.28.3} and~\eqref{7.28.4}.
\begin{theorem}\label{thm2} Suppose that $X_{\pm}$ is a strictly pseudoconvex
  (pseudoconcave) compact, K\"ahler complex manifold with boundary.  For $0\leq
  p,q\leq n,$ we have the strong orthogonal decompositions
\begin{equation}
\alpha=\dbar\dbar^*G^{p,q}_{\cR_\pm}\alpha+\dbar^*\dbar
G^{p,q}_{\cR_\pm}\alpha+
H^{p,q}_{\cR_\pm}\alpha.
\end{equation}
If $\alpha\in\Dom_{L^2}(\dbar^{p,q}_{\cR_{\pm}})$ then
\begin{equation}
\dbar G^{p,q}_{\cR_{\pm}}\alpha=G^{p,q+1}_{\cR_{\pm}}\dbar\alpha.
\label{7.28.3}
\end{equation}
If $\alpha\in\Dom_{L^2}([\dbar^{p,q}_{\cR_{\pm}}]^*)$ then
\begin{equation}
\dbar^* G^{p,q}_{\cR_{\pm}}\alpha= G^{p,q-1}_{\cR_{\pm}}\dbar^*\alpha.
\label{7.28.4}
\end{equation}
\end{theorem}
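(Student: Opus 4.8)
The plan is to follow the classical Hodge-theoretic argument of Kohn and Folland--Kohn, but with the key structural inputs supplied by Theorem~\ref{thm1}, Lemma~\ref{lem10}, and Lemma~\ref{lem11} so that the nonlocality of the boundary conditions $\cR_{\pm}$ causes no trouble. The starting point is the self-adjoint operator $\square^{p,q}_{\cR_{\pm}}$ defined by the Friedrichs extension: Theorem~\ref{thm1} (with $s=0$) gives a bounded inverse of $\square^{p,q}_{\cR_{\pm}}+\mu^2$ mapping into $H^1$, hence a compact resolvent, a discrete spectrum, and a finite dimensional nullspace $\cH^{p,q}_{\cR_{\pm}}(X_{\pm})$. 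Consequently the partial inverse $G^{p,q}_{\cR_{\pm}}$ (the Green's operator) and the orthogonal projector $H^{p,q}_{\cR_{\pm}}$ onto $\cH^{p,q}_{\cR_{\pm}}$ are well defined, bounded, commute with $\square^{p,q}_{\cR_{\pm}}$, and satisfy $\square^{p,q}_{\cR_{\pm}}G^{p,q}_{\cR_{\pm}}=\Id-H^{p,q}_{\cR_{\pm}}$. First I would record that, by Theorem~\ref{thm1}, $G^{p,q}_{\cR_{\pm}}$ maps $H^s$ to $H^{s+1}$, so harmonic forms are smooth and $G^{p,q}_{\cR_{\pm}}$ preserves the relevant domains.

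The orthogonal decomposition then follows by applying $\square^{p,q}_{\cR_{\pm}}=\dbar\dbar^*+\dbar^*\dbar$ to $G^{p,q}_{\cR_{\pm}}\alpha$ and using that $G$ is the partial inverse:
\begin{equation}
\alpha=\square^{p,q}_{\cR_{\pm}}G^{p,q}_{\cR_{\pm}}\alpha+H^{p,q}_{\cR_{\pm}}\alpha
=\dbar\dbar^* G^{p,q}_{\cR_{\pm}}\alpha+\dbar^*\dbar G^{p,q}_{\cR_{\pm}}\alpha+H^{p,q}_{\cR_{\pm}}\alpha.
\end{equation}
The three summands are pairwise orthogonal: the harmonic term is orthogonal to the range of $\square$, and the two middle terms are orthogonal because $\langle\dbar\dbar^*\beta,\dbar^*\dbar\beta\rangle$ involves $\dbar^2$ and $[\dbar^*]^2$, both of which vanish in the sense of closed operators by Lemma~\ref{lem10} and Lemma~\ref{lem11}. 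This is exactly where those two lemmas are essential: they guarantee that $\dbar G^{p,q}_{\cR_{\pm}}\alpha\in\Dom(\dbar^{p,q+1}_{\cR_{\pm}})$ and $\dbar^*G^{p,q}_{\cR_{\pm}}\alpha\in\Dom([\dbar^{p,q-1}_{\cR_{\pm}}]^*)$, so that the manipulations stay inside the correct domains and the compositions $\dbar\dbar$ and $\dbar^*\dbar^*$ really are zero rather than merely formally zero.

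For the commutation identities \eqref{7.28.3} and \eqref{7.28.4}, I would argue as in the classical case. Suppose $\alpha\in\Dom_{L^2}(\dbar^{p,q}_{\cR_{\pm}})$. Applying $\dbar$ to the Hodge decomposition of $\alpha$, using $\dbar\dbar=0$ (Lemma~\ref{lem10}) and $\dbar H^{p,q}_{\cR_{\pm}}\alpha=0$ (harmonic forms are $\dbar$-closed, since $\square$-harmonic implies $\dbar$- and $\dbar^*$-closed by the basic estimate together with the self-adjointness), one obtains $\dbar\alpha=\dbar\dbar^*\dbar G^{p,q}_{\cR_{\pm}}\alpha$. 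Comparing this with the Hodge decomposition of $\dbar\alpha$ in degree $q+1$, and using that $\square_{\cR_{\pm}}$ commutes with $\dbar$ on the appropriate domains (again Lemma~\ref{lem10}, so that $\dbar G=G\dbar$ on the range of $\square$), I would conclude $\dbar G^{p,q}_{\cR_{\pm}}\alpha=G^{p,q+1}_{\cR_{\pm}}\dbar\alpha$ after checking that both sides lie in the orthocomplement of the harmonic space and have the same image under $\square^{p,q+1}_{\cR_{\pm}}$. The identity \eqref{7.28.4} for $\dbar^*$ is proved symmetrically using Lemma~\ref{lem11}, or alternatively deduced from \eqref{7.28.3} by applying the Hodge star duality of Section~\ref{s.dualbc} together with the relation $\bcS_p=\starb\cS_{n-p}\starb$ in \eqref{04.5}.

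I expect the main obstacle to be the careful domain bookkeeping rather than any genuinely new estimate: one must verify at each step that $G^{p,q}_{\cR_{\pm}}$ carries a form into the intersection of domains appearing in \eqref{eqn04.21}, so that $\dbar$ and $\dbar^*$ may be applied and commuted with $G$ without leaving the form domain. The subtlety is that $\square_{\cR_{\pm}}$ was constructed by the Friedrichs process for each fixed $(p,q)$, so the statement ``$\square$ commutes with $\dbar$'' must be justified on domains, not just formally; this is precisely what Lemmas~\ref{lem10} and~\ref{lem11} are designed to supply, and the regularity statement in Theorem~\ref{thm1} is what lets me promote the distributional identities to genuine $L^2$ (indeed smooth, for large $s$) identities.
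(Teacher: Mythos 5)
Your proposal is correct and follows exactly the route the paper takes: the paper's proof consists of the remark that, given Theorem~\ref{thm1} and Lemmas~\ref{lem10}--\ref{lem11} (i.e., the higher norm estimates and the vanishing of $\dbar_{\cR_{\pm}}^2$ and $[\dbar^*_{\cR_{\pm}}]^2$ as closed operators), the argument is identical to that of Theorem 3.1.14 in Folland--Kohn, which is precisely the classical argument you spell out. Your domain bookkeeping and the role you assign to each cited ingredient match the paper's intent, so there is nothing to correct.
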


Given Theorem~\ref{thm1} and Lemmas~\ref{lem10}--\ref{lem11} the proof of this
theorem is exactly the same as the proof of Theorem 3.1.14
in~\cite{FollandKohn1}. Similar decompositions also hold for the dual boundary
value problems defined by $\Id-\cR_+$ on $X_-$ and $\Id-\cR_-$ on $X_+.$ We
leave the explicit statements to the reader.

As in the case of the standard $\dbar$-Neumann problems these estimates show
that the domains of the self adjoint operators defined by the quadratic forms
$\cQ^{p,q}$ with form domains specified as the intersection of
$\Dom(\dbar^{p,q}_{\cR_\pm})\cap \Dom([\dbar^{p,q-1}_{\cR_\pm}]^*)$ are exactly
as one would expect.  As in~\cite{FollandKohn1} one easily deduces the
following descriptions of the unbounded self adjoint operators
$\square^{p,q}_{\cR_{\pm}}.$
\begin{proposition} Suppose that $X_+$ is strictly pseudoconvex, then the
  operator $\square^{p,q}_{\cR_+}$ with domain specified by
\begin{equation}
\begin{split}
&\sigma^{pq}\in\Dom_{L^2}(\dbar^{p,q}_{\cR_+})\cap
\Dom_{L^2}([\dbar^{p,q-1}_{\cR_+}]^*)\text{ such that}\\
&\dbar^*\sigma^{pq}\in\Dom_{L^2}(\dbar^{p,q-1}_{\cR_+})\text{ and }
\dbar\sigma^{pq}\in Dom_{L^2}([\dbar^{p,q}_{\cR_+}]^*)
\end{split}
\label{04.17}
\end{equation}
is a self adjoint operator. It coincides with the Friedrichs extension defined
by $\cQ^{pq}$ with form domain given by the first condition in~\eqref{04.17}.
\end{proposition}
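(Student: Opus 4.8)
The plan is to establish self-adjointness of $\square^{p,q}_{\cR_+}$ by identifying it with the Friedrichs extension of the closed quadratic form $\bcQ^{p,q}_{\cR_+}$, and then to verify that the operator-theoretic domain of that Friedrichs extension is precisely the set described in~\eqref{04.17}. The subelliptic estimate~\eqref{7.27.3} guarantees that the form domain $\Dom(\bcQ^{p,q}_{\cR_+}) = \Dom_{L^2}(\dbar^{p,q}_{\cR_+})\cap\Dom_{L^2}([\dbar^{p,q-1}_{\cR_+}]^*)$ is compactly embedded in $L^2$, so the form is densely defined, closed, and nonnegative; standard form theory then produces a canonical self-adjoint operator $T$ whose domain consists of those $\sigma$ in the form domain for which the linear functional $\tau \mapsto \bcQ^{p,q}_{\cR_+}(\sigma,\tau)$ is $L^2$-bounded. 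The substance of the proposition is to show $T = \square^{p,q}_{\cR_+}$ with the stated domain.

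First I would recall that for $\sigma,\tau$ in the form domain one has the polarized identity
\begin{equation}
\bcQ^{p,q}_{\cR_+}(\sigma,\tau)=\langle\dbar\sigma,\dbar\tau\rangle_{L^2}+\langle\dbar^*\sigma,\dbar^*\tau\rangle_{L^2},
\end{equation}
and attempt to integrate by parts to move the derivatives off $\tau$. The point is that if $\sigma$ lies in the domain~\eqref{04.17}, meaning additionally $\dbar^*\sigma\in\Dom_{L^2}(\dbar^{p,q-1}_{\cR_+})$ and $\dbar\sigma\in\Dom_{L^2}([\dbar^{p,q}_{\cR_+}]^*)$, then by the definition of the $L^2$-adjoints as graph closures the integration by parts is legitimate for all $\tau$ in the form domain, yielding
\begin{equation}
\bcQ^{p,q}_{\cR_+}(\sigma,\tau)=\langle(\dbar^*\dbar+\dbar\dbar^*)\sigma,\tau\rangle_{L^2}=\langle\square^{p,q}_{\cR_+}\sigma,\tau\rangle_{L^2}.
\end{equation}
This shows that $\sigma$ belongs to $\Dom(T)$ with $T\sigma=\square^{p,q}_{\cR_+}\sigma$, giving the inclusion of~\eqref{04.17} into the Friedrichs domain. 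For the reverse inclusion I would take $\sigma\in\Dom(T)$, so that $\bcQ^{p,q}_{\cR_+}(\sigma,\tau)=\langle f,\tau\rangle_{L^2}$ for some $f\in L^2$ and all form-domain $\tau$; choosing $\tau$ ranging over $\Dom_{L^2}(\dbar^{p,q}_{\cR_+})$ and separately over $\Dom_{L^2}([\dbar^{p,q-1}_{\cR_+}]^*)$ and using the defining boundary conditions isolates the two pieces, forcing $\dbar\sigma$ into the adjoint domain and $\dbar^*\sigma$ into the corresponding $\dbar$-domain, which are exactly the second-line conditions of~\eqref{04.17}.

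The main obstacle I expect is the boundary-term bookkeeping in the integration by parts, since the conditions $\cR_+$ are nonlocal and the boundary values live only in $H^{-\ha}$ a priori (Lemma~\ref{lem111}); one must confirm that the pairing of the Szeg\H o-type boundary projectors against their complements makes the boundary contribution vanish. This is precisely where Lemmas~\ref{lem10}--\ref{lem11} are indispensable: they show that $\dbar$ maps $\Dom_{L^2}(\dbar^{p,q}_{\cR_+})$ into $\Dom_{L^2}(\dbar^{p,q+1}_{\cR_+})$ and $\dbar^*$ maps $\Dom_{L^2}([\dbar^{p,q}_{\cR_+}]^*)$ into $\Dom_{L^2}([\dbar^{p,q-1}_{\cR_+}]^*)$, so that the complementary projectors annihilate precisely the boundary traces that would otherwise obstruct the pairing. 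Granting those compatibility lemmas and the higher-norm estimate of Theorem~\ref{thm1}, which guarantees that elements of $\Dom(T)$ actually gain a derivative and hence have genuine (not merely distributional) boundary values, the argument reduces to the same functional-analytic manipulation used by Folland and Kohn~\cite{FollandKohn1}, and I would simply invoke that parallel rather than reproduce the routine estimates.
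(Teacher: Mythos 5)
Your setup and your forward inclusion are essentially sound, and they match what the paper (which simply defers to Folland--Kohn at this point) intends: the form $\bcQ^{p,q}_{\cR_+}$ is closed and nonnegative on the intersection domain, the representation theorem gives a self-adjoint operator $T,$ and if $\sigma$ satisfies~\eqref{04.17} then integration by parts --- justified by graph-closure approximation together with the complementarity of the boundary projectors ($\cS_p$ against $\Id-\cS_p$), not by Lemmas~\ref{lem10}--\ref{lem11}, which assert domain-mapping properties needed for Theorem~\ref{thm2} --- yields $\bcQ^{p,q}_{\cR_+}(\sigma,\tau)=\langle\square\sigma,\tau\rangle_{L^2}$ for all $\tau$ in the form domain, hence $\sigma\in\Dom(T).$

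The reverse inclusion is where your argument has a genuine gap. You propose to let $\tau$ range over $\Dom_{L^2}(\dbar^{p,q}_{\cR_+})$ and separately over $\Dom_{L^2}([\dbar^{p,q-1}_{\cR_+}]^*)$ so as to isolate the two terms of the form; but the identity $\bcQ^{p,q}_{\cR_+}(\sigma,\tau)=\langle f,\tau\rangle_{L^2}$ is available only for $\tau$ in the form domain, which is the \emph{intersection} of those two spaces --- for $\tau$ outside the intersection one of the pairings $\langle\dbar\sigma,\dbar\tau\rangle,$ $\langle\dbar^*\sigma,\dbar^*\tau\rangle$ is not even defined --- so no choice of test form isolates the pieces. (Moreover, even granting such an isolation, you would conclude membership in the domain of the Hilbert-space adjoint of $\dbar^{p,q}_{\cR_+},$ whereas~\eqref{04.17} refers to the graph closure of the formally adjoint boundary problem; identifying these is itself nontrivial.) Your appeal to Theorem~\ref{thm1} is also circular as stated: it provides regularity only for the solutions it constructs, and transferring that regularity to an arbitrary element of $\Dom(T)$ is precisely the missing step. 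The correct mechanism, which is what ``as in Folland--Kohn'' amounts to here, is: given $\sigma\in\Dom(T),$ set $\beta=(T+\mu^2)\sigma\in L^2$ and use the existence-and-uniqueness statement of Theorem~\ref{thm1} to produce $\alpha$ satisfying~\eqref{04.17} with $(\square^{p,q}+\mu^2)\alpha=\beta;$ by your forward inclusion $\alpha\in\Dom(T)$ and $(T+\mu^2)\alpha=\beta=(T+\mu^2)\sigma,$ and since $T\geq 0$ the operator $T+\mu^2$ is injective, whence $\sigma=\alpha$ satisfies~\eqref{04.17}. With this replacement (and the boundary-pairing point above) your proof closes, and self-adjointness of $\square^{p,q}_{\cR_+}$ with domain~\eqref{04.17} follows since it then equals $T.$
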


\begin{proposition} Suppose that $X_-$ is strictly pseudoconcave, then the
  operator $\square^{p,q}_{\cR_-}$ with domain specified by
\begin{equation}
\begin{split}
&\sigma^{pq}\in\Dom_{L^2}(\dbar^{p,q}_{\cR_-})\cap
\Dom_{L^2}([\dbar^{p,q-1}_{\cR_-}]^*)\text{ such that}\\
&\dbar^*\sigma^{pq}\in\Dom_{L^2}(\dbar^{p,q-1}_{\cR_-})\text{ and }
\dbar\sigma^{pq}\in Dom_{L^2}([\dbar^{p,q}_{\cR_-}]^*)
\end{split}
\label{04.18}
\end{equation}
is a self adjoint operator. It coincides with the Friedrichs extension defined
by $\cQ^{pq}$ with form domain given by the first condition in~\eqref{04.18}.
\end{proposition}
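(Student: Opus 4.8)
The plan is to reproduce, on the pseudoconcave side, the argument that proves the preceding proposition for $X_+$, which in turn follows the proof of Theorem 3.1.14 in~\cite{FollandKohn1}; the only inputs that change are the sign of the manifold and the form degrees in which the boundary condition is modified, now $q=n-1,n$ via $\bcS_p$ rather than $q=0,1.$ Write $T$ for the operator $\square^{p,q}_{\cR_-}$ acting on the domain~\eqref{04.18}, and $T_F$ for the Friedrichs extension of the closed form $\bcQ^{p,q}_{\cR_-},$ whose form domain is $\Dom_{L^2}(\dbar^{p,q}_{\cR_-})\cap\Dom_{L^2}([\dbar^{p,q-1}_{\cR_-}]^*),$ i.e. the first line of~\eqref{04.18}. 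The goal is to show that $T$ is self adjoint and that $T=T_F.$

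First I would check that $T$ is symmetric and nonnegative. For $\sigma,\tau$ in the domain~\eqref{04.18}, the supplementary conditions $\dbar^*\sigma\in\Dom(\dbar^{p,q-1}_{\cR_-})$ and $\dbar\sigma\in\Dom([\dbar^{p,q}_{\cR_-}]^*)$ are exactly what is needed to integrate by parts in $\langle\square^{p,q}\sigma,\tau\rangle$; Lemmas~\ref{lem10} and~\ref{lem11} guarantee that the relevant boundary terms vanish and that $\dbar\dbar^*\sigma$ and $\dbar^*\dbar\sigma$ lie in $L^2,$ yielding $\langle T\sigma,\tau\rangle=\langle\dbar\sigma,\dbar\tau\rangle+\langle\dbar^*\sigma,\dbar^*\tau\rangle=\bcQ^{p,q}_{\cR_-}(\sigma,\tau).$ In particular $\langle T\sigma,\sigma\rangle=\bcQ^{p,q}_{\cR_-}(\sigma)\geq 0,$ so $T\geq 0$ and $T+\mu^2$ is injective for every $\mu>0.$

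Next I would invoke Theorem~\ref{thm1} in its pseudoconcave form with $s=0$: it produces, for each $\beta\in L^2(X_-;\Lambda^{p,q}),$ a solution $\alpha\in H^1$ of $(\square^{p,q}+\mu^2)\alpha=\beta$ satisfying precisely the four conditions of~\eqref{eqn04.21}, which are identical to the membership conditions defining~\eqref{04.18}. Hence $\alpha\in\Dom(T)$ with $(T+\mu^2)\alpha=\beta,$ so $T+\mu^2$ is surjective. A nonnegative symmetric operator whose shift $T+\mu^2$ is a bijection has a bounded, everywhere defined, symmetric inverse $(T+\mu^2)^{-1},$ which is therefore self adjoint; consequently $T+\mu^2,$ and thus $T,$ is self adjoint.

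Finally I would identify $T$ with $T_F.$ The first line of~\eqref{04.18} places $\Dom(T)$ inside the form domain, and the integration-by-parts identity from the first step shows $\bcQ^{p,q}_{\cR_-}(\sigma,v)=\langle T\sigma,v\rangle$ for every $\sigma\in\Dom(T)$ and every $v$ in the form domain, since only the form-domain conditions on $v$ are used. By the variational characterization of the Friedrichs extension this forces $\Dom(T)\subseteq\Dom(T_F)$ with $T_F\sigma=T\sigma$ there; as both operators are self adjoint, hence maximally symmetric, $T=T_F.$ The one genuinely analytic ingredient is the surjectivity in the third step, and it is supplied wholesale by Theorem~\ref{thm1} (via Corollary 3 of~\cite{Epstein3}). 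The remaining obstacle is purely bookkeeping, namely verifying that the boundary conditions~\eqref{04.6}--\eqref{04.7} defining the several graph closures are matched correctly under the integrations by parts, which is exactly where Lemmas~\ref{lem10}--\ref{lem11} enter.
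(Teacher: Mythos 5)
Your proposal is correct and takes essentially the same route the paper intends: the paper gives no detailed proof, remarking only that the estimates (Theorem~\ref{thm1}) let one argue ``as in''\ Folland--Kohn, and your argument---symmetry and nonnegativity via integration by parts, surjectivity of $\square^{p,q}_{\cR_-}+\mu^2$ from Theorem~\ref{thm1} at $s=0$, hence self adjointness, then maximal symmetry to identify the operator with the Friedrichs extension---is exactly that standard argument. One cosmetic correction: the vanishing of the boundary terms in your first step comes from the orthogonality of $\Im\bcS_p$ and $\Im(\Id-\bcS_p)$ in the pairing between $\Dom_{L^2}(\dbar^{p,q}_{\cR_-})$ and $\Dom_{L^2}([\dbar^{p,q}_{\cR_-}]^*)$, not from Lemmas~\ref{lem10}--\ref{lem11}, which instead assert that $\dbar$ and $\dbar^*$ preserve the respective $\cR_-$-domains (i.e., $\dbar^2=0$ and $[\dbar^*]^2=0$ as closed operators).
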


\section{The nullspaces of the modified $\dbar$-Neumann problems}\label{s.nullsp}

As noted above $\square^{p,q}_{\cR_{\pm}}$ has a compact resolvent in all form
degrees and therefore the harmonic spaces $\cH^{p,q}_{\cR_{\pm}}(X_{\pm})$ are
finite dimensional. The boundary conditions easily imply that
\begin{eqnarray}
\cH^{p,0}_{\cR_+}(X_+)&=&0\text{ for all }p\text{ and }
\cH^{p,q}_{\cR_+}(X_+)=\cH^{p,q}_{\dbar}(X_+)\text{ for }q>1.
\label{04.19}\\
\cH^{p,q}_{\cR_-}(X_-)&=&\cH^{p,q}_{\dbar}(X_-)\text{ for }q<n-1.
\end{eqnarray}
We now identify $\cH^{p,1}_{\cR_+}(X_+),$ and $\cH^{p,n}_{\cR_-}(X_-),$ but
 leave $\cH^{p,n-1}_{\cR_-}(X_-)$ to the next section.
 
We begin with the pseudoconvex case.  To identify the null space of
 $\square^{p,1}_{\cR_+}$ we need to define the following vector space:
\begin{equation}
E^{p,1}_0(\bX_+)=\frac{\{\dbar\alpha:\: \alpha\in\CI(\bX_+;\Lambda^{p,0})
\text{ and }\dbar_b\alpha_b=0\}}
{\{\dbar\alpha:\: \alpha\in\CI(\bX_+;\Lambda^{p,0})
\text{ and }\alpha_b=0\}}.
\end{equation}
It is clear that $E^{p,1}_0(\bX_+)$ is a subspace of the ``zero''-cohomology group
$H^{p,1}_0(\bX_+)\simeq
\cH^{p,1}_{\dbar^*}(X_+)\simeq[\cH^{n-p,n-1}_{\dbar}]^*(X_+)$
and is therefore finite dimensional. If $X_+$ is a Stein manifold, then this
vector space is trivial.  It is also not difficult to show that
\begin{equation}
E^{p,1}_0(\bX_+)\simeq\frac{H^{p,0}(Y)}{[H^{p,0}(\bX_+)]_b}.
\label{eq06.1}
\end{equation}
Thus $E^{p,1}_0$ measures the extent of the failure of closed $(p,0)$ forms on
$bX_+$ to have  holomorphic extensions to $X_+.$ 

\begin{lemma}\label{lem17}
If $X_+$ is strictly pseudoconvex, then
$$\cH^{p,1}_{\cR_+}(X_+)\simeq\cH^{p,1}_{\dbar}(X_+)\oplus E^{p,1}_0.$$
\end{lemma}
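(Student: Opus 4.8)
The plan is to understand what the modified boundary condition $\cR_+$ changes in degree $1$ relative to the ordinary $\dbar$-Neumann condition, and to track how this enlarges the harmonic space. Recall that $\cH^{p,1}_{\cR_+}(X_+)$ consists of $\sigma^{p1}$ with $\dbar\sigma^{p1}=0$, $\dbar^*\sigma^{p1}=0$, satisfying $(\Id-\cS_p)[\dbnc\sigma^{p1}]_b=0$ together with the natural complementary condition coming from the domain of $\dbar^{p,1}_{\cR_+}$ (which carries no boundary constraint in degree $1$). By contrast, $\cH^{p,1}_{\dbar}(X_+)$ requires the \emph{full} $\dbar$-Neumann condition $[\dbnc\sigma^{p1}]_b=0$. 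So the $\cR_+$-harmonic forms are exactly those that are $\dbar$- and $\dbar^*$-closed and whose $\dbnc$-restriction to the boundary need only vanish after applying $\Id-\cS_p$; the $\cS_p$-part is unconstrained. This is the source of the extra summand, and the first step is to write down these two sets of conditions side by side and isolate the one-parameter's worth of freedom that $\cS_p$ permits.

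Next I would construct the map realizing the isomorphism. Given a $\cR_+$-harmonic form $\sigma^{p1}$, the obstruction to its being genuinely $\dbar$-Neumann harmonic is measured by the class $\cS_p[\dbnc\sigma^{p1}]_b$, which is a CR-holomorphic $(p,0)$-datum on $Y$. The plan is to define a map $\cH^{p,1}_{\cR_+}(X_+)\to E^{p,1}_0$ by sending $\sigma^{p1}$ to the class it determines; the key is that a closed $(p,1)$-form of this type can be written, using the Hodge decomposition of Theorem~\ref{thm2} and $\dbar$-closedness, as $\dbar\alpha$ for a suitable $(p,0)$-form $\alpha$ whose boundary trace $\alpha_b$ satisfies $\dbar_b\alpha_b=0$ but need not vanish, and that the two defining quotient spaces in $E^{p,1}_0(\bX_+)$ match precisely the two possible behaviors of $\alpha_b$. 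I would check that the kernel of this map consists exactly of forms satisfying the full $\dbar$-Neumann condition, i.e.\ $\cH^{p,1}_{\dbar}(X_+)$, giving a short exact sequence
\begin{equation}
0\to\cH^{p,1}_{\dbar}(X_+)\to\cH^{p,1}_{\cR_+}(X_+)\to E^{p,1}_0\to 0,
\end{equation}
and then use the harmonic (orthogonal) structure to split it, yielding the claimed direct sum.

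The main obstacle I expect is surjectivity of the map onto $E^{p,1}_0$: given a class in $E^{p,1}_0$ represented by some $\dbar\alpha$ with $\dbar_b\alpha_b=0$, I must produce a genuinely \emph{harmonic} representative in $\cH^{p,1}_{\cR_+}(X_+)$, i.e.\ solve the auxiliary problem of making $\dbar\alpha$ both $\dbar^*$-closed and $\cR_+$-admissible without changing its class. This is where Theorem~\ref{thm1} and the Hodge decomposition do the real work: I would apply $\square^{p,1}_{\cR_+}$-theory to correct $\dbar\alpha$ by a $\dbar$-exact term lying in the smaller quotient (those $\dbar\beta$ with $\beta_b=0$, which are $\dbar$-Neumann exact and hence harmless), producing the harmonic projection $H^{p,1}_{\cR_+}(\dbar\alpha)$, and verify it has the prescribed boundary behavior. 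Establishing well-definedness on $E^{p,1}_0$ (independence of the choice of representative $\alpha$, using that the denominator corresponds to $\alpha_b=0$) and checking the identification~\eqref{eq06.1} are routine once the exact sequence is in place, so the crux is genuinely the existence of the harmonic lift.
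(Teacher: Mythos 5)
Your overall architecture---a short exact sequence $0\to\cH^{p,1}_{\dbar}(X_+)\to\cH^{p,1}_{\cR_+}(X_+)\to E^{p,1}_0\to 0,$ split by finite dimensionality---matches what the paper in effect proves, and your surjectivity step (producing a harmonic representative by a Hodge-theoretic correction) is close in spirit to the paper's construction. But the mechanism you propose for \emph{defining} the map to $E^{p,1}_0$ has a genuine gap: you claim that every $\sigma^{p1}\in\cH^{p,1}_{\cR_+}(X_+)$ can be written as $\dbar\alpha$ with $\dbarb\alpha_b=0.$ This is false in general. By~\eqref{eq1}, $\cH^{p,1}_{\dbar}(X_+)\simeq H^{p,1}(\bX_+),$ so a nonzero $\dbar$-Neumann harmonic form is never $\dbar$-exact with potential smooth up to the boundary; and the Hodge decomposition of Theorem~\ref{thm2} cannot supply the potential, since for $\sigma\in\cH^{p,1}_{\cR_+}(X_+)$ it degenerates to $\sigma=H^{p,1}_{\cR_+}\sigma.$ Worse, the claim is inconsistent with your own kernel computation: if $\sigma=\dbar\beta$ with $\beta_b=0$ and $\dbar^*\sigma=0,$ then integration by parts (the boundary term vanishes because a $(p,0)$-form with $\beta_b=0$ vanishes on $bX_+$) gives $\|\sigma\|_{L^2}^2=\langle\dbar\beta,\sigma\rangle_{X_+}=0.$ So under your definition the kernel of the map would be $\{0\},$ not $\cH^{p,1}_{\dbar}(X_+),$ and your exact sequence would force $\cH^{p,1}_{\cR_+}(X_+)\simeq E^{p,1}_0,$ contradicting the lemma whenever $\cH^{p,1}_{\dbar}(X_+)\neq 0.$

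The fix is the invariant you mention in passing but then abandon: attach to $\sigma^{p1}$ its boundary datum $a=(\dbnc\sigma^{p1})_b=\cS_p(\dbnc\sigma^{p1})_b,$ not an exact-form representative. The paper shows by integration by parts against $\beta\in\cH^{p,0}_{\dbar}(X_+)$ that $a$ is orthogonal to $\cH^{p,0}_{\dbar}(X_+)\restrictedto_{bX_+};$ conversely, for any $a\in\Im\cS_p\ominus\cH^{p,0}_{\dbar}(X_+)\restrictedto_{bX_+}$ it constructs $\alpha=\dbar(\rho\ta-b)\in\cH^{p,1}_{\cR_+}(X_+)$ with $\dbnc\alpha=a,$ where $\ta$ extends $a$ and $b=G^{p,0}_{\dbar}\dbar^*\dbar(\rho\ta),$ and notes that two harmonic forms with the same datum differ by an element of $\cH^{p,1}_{\dbar}(X_+).$ This identifies the quotient $\cH^{p,1}_{\cR_+}(X_+)/\cH^{p,1}_{\dbar}(X_+)$ with $\Im\cS_p\ominus\cH^{p,0}_{\dbar}(X_+)\restrictedto_{bX_+},$ and only then does~\eqref{eq06.1} convert that space of boundary data into $E^{p,1}_0.$ In short, the boundary datum together with~\eqref{eq06.1} must carry the weight that you tried to place on an exactness statement which fails precisely on the summand $\cH^{p,1}_{\dbar}(X_+)$ that the lemma is designed to exhibit.
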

\begin{proof} Clearly $\cH^{p,1}_{\cR_+}(X_+)\supset \cH^{p,1}_{\dbar}(X_+).$
If $\sigma^{p1}\in\cH^{p,1}_{\cR_+}(X_+),$ then
$$(\Id-\cS_p)(\dbnc\sigma^{p1})_b=0.$$ 
Let $\beta\in \cH^{p,0}_{\dbar}(X_+),$ then
\begin{equation}
0=\langle\dbar\beta,\sigma^{p1}\rangle_{X_+}=
\langle\beta,\dbnc\sigma^{p1}\rangle_{bX_+}
\end{equation}
Thus, we see that $\dbnc\sigma^{p1}$ is orthogonal to
$\cH^{p,0}_{\dbar}(X_+)\restrictedto_{bX_+}.$ 

Let $a\in \Im\cS_p\ominus \cH^{p,0}_{\dbar}(X_+)\restrictedto_{bX_+}.$ We now
show that there is an element $\alpha\in\cH^{p,1}_{\cR_+}(X_+)$ with
$\dbnc\alpha=a.$ Let $\ta$ denote a smooth extension of $a$ to $X_+.$ If
$\xi\in\cH^{p,0}_{\dbar}(X_+),$ then
\begin{equation}
\langle\dbar^*\dbar(\rho\ta),\xi\rangle_{X_+}=\langle a,\xi\rangle_{bX_+}.
\end{equation}
By assumption, $a$ is orthogonal to
$\cH^{p,0}_{\dbar}(X_+)\restrictedto_{bX_+},$ thus
$H^{p,0}_{\dbar}(\dbar^*\dbar(\rho\ta))=0.$ With
$b=G^{p,0}_{\dbar}\dbar^*\dbar(\rho\ta),$ we see that
\begin{equation}
\begin{split}
\dbar^*\dbar b= (\Id-H^{p,0}_{\dbar})&\dbar^*\dbar a=\dbar^*\dbar a\\
\dbnc\dbar b&=0.
\end{split}
\end{equation}
Hence if $\alpha=\dbar(\rho\ta-b),$ then $\dbar\alpha=\dbar^*\alpha=0,$ and
$\dbnc\alpha=a.$ If $\alpha_1, \alpha_2\in\cH^{p,1}_{\cR_+}(X_+)$ both satisfy
$\dbnc\alpha_1=\dbnc\alpha_2=a,$ then
$\alpha_1-\alpha_2\in\cH^{p,1}_{\dbar}(X_+).$ Together with the existence
result, this shows that
\begin{equation}
\frac{\cH^{p,1}_{\cR_+}(X_+)}{\cH^{p,1}_{\dbar}(X_+)}\simeq E^{p,1}_0,
\end{equation}
which completes the proof of the lemma.
\end{proof}

For the pseudoconcave side we have
\begin{lemma} If $X_-$ is strictly pseudoconcave then
$\cH^{pn}_{\cR_-}(X_-)\simeq[H^{n-p,0}(X_-)]^\star\simeq\cH^{p,n}_{\Id-\cR_+}(X_-).$
 \end{lemma}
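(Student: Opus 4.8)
The plan is to identify the harmonic space $\cH^{pn}_{\cR_-}(X_-)$ in top degree $q=n$ by unwinding what the boundary condition $\cR_-$ forces and then using Hodge star duality to relate it to a group on the pseudoconvex side. First I would observe that a form $\sigma\in\cH^{pn}_{\cR_-}(X_-)$ satisfies $\dbar\sigma=0$ automatically (there are no $(p,n+1)$-forms), so the harmonic condition reduces to $\dbar^*\sigma=0$ together with the boundary conditions coming from $\Dom_{L^2}([\dbar^{p,n-1}_{\cR_-}]^*)$, namely~\eqref{04.7}: $(\Id-\bcS_p')(\dbar\rho\rfloor\sigma^{pn})_b=0$, and from $\Dom_{L^2}(\dbar^{p,n}_{\cR_-})$ there is no condition since $q=n\neq n-1$. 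Thus $\cH^{pn}_{\cR_-}(X_-)$ consists of the $\dbar^*$-closed $(p,n)$-forms satisfying this single modified boundary condition in degree $n$.

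The key step is to apply the Hodge star isomorphism~\eqref{hdgstr1}, $\star:\Lambda^{p,n}(X_-)\to\Lambda^{n-p,0}(X_-)$, and track how the boundary conditions transform. Using the identities~\eqref{04.20}, a $\dbar^*$-closed $(p,n)$-form corresponds to a $\dbar$-closed $(n-p,0)$-form, i.e.\ a holomorphic $(n-p,0)$-form. Using the relations~\eqref{strrel} together with~\eqref{04.5}, the boundary condition $(\Id-\bcS_p')(\dbar\rho\rfloor\sigma)_b=0$ should transform into the condition $\cS_{n-p}'(\star\sigma)_b=0$ on the image side — but for a holomorphic $(n-p,0)$-form $\tau$ one automatically has $\tau_b=\cS_{n-p}\tau_b$, so on the pseudoconvex side the only harmonic $(n-p,0)$-forms with $\cS_{n-p}(\cdot)_b=0$ are those whose boundary values lie outside the Szeg\H o range. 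This is exactly the mechanism behind~\eqref{04.19}, $\cH^{p,0}_{\cR_+}(X_+)=0$. I would therefore set up the correspondence so that $\cH^{pn}_{\cR_-}(X_-)\simeq[H^{n-p,0}(X_-)]^\star$ directly, identifying the top-degree harmonic space with the dual of the space of holomorphic $(n-p,0)$-forms on $X_-$ itself (note $X_-$ is a compact manifold with boundary, so $H^{n-p,0}(X_-)$ denotes holomorphic $(n-p,0)$-forms, which is finite dimensional on a pseudoconcave manifold). The second isomorphism, $[H^{n-p,0}(X_-)]^\star\simeq\cH^{p,n}_{\Id-\cR_+}(X_-)$, then follows from the general duality between $\cR_\pm'$ and $\Id-\cR_\mp'$ established at the start of Section~\ref{s.dualbc}, specialized to $X_-$ with the roles reversed; the dual condition $\Id-\cR_+$ on $X_-$ in degree $n$ is precisely the one matching the transformed boundary condition.

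Concretely I would argue the first isomorphism by constructing the map explicitly: send $\sigma\in\cH^{pn}_{\cR_-}(X_-)$ to the holomorphic form $\star\sigma$, show this lands in the right space, and verify injectivity and surjectivity using the basic estimate for the dual boundary conditions proved in the second Lemma of Section~\ref{s.dualbc} (the one governing $\sigma^{pq}_b$ conditions on $X_-$). The pairing giving the dual rather than the space itself comes from complex conjugation built into the Hodge star, as noted after~\eqref{hdgstr1}. The main obstacle I expect is bookkeeping: carefully matching the generalized-versus-classical projector conventions and the sign/degree shifts in~\eqref{strrel},~\eqref{04.5},~\eqref{eqn04.17} so that the transformed boundary condition on $\star\sigma$ is exactly the defining condition of the target space, and confirming that no additional harmonic contributions (analogous to the $E^{p,1}_0$ term in Lemma~\ref{lem17}) appear in this top degree — which they should not, precisely because the holomorphic $(n-p,0)$-forms already satisfy the Szeg\H o constraint and there is no further cohomological defect to measure.
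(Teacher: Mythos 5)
Your strategy coincides with the paper's: reduce the harmonic condition in top degree to $\dbar^*\sigma^{pn}=0$ plus the single boundary condition~\eqref{04.7}, apply the Hodge star, and transport that condition through~\eqref{strrel} and~\eqref{04.5}. However, the transport step is stated with the wrong complementation, and as written it proves the opposite of the lemma. From $\bcS_p'=\starb\cS_{n-p}'\starb$ and $[(\dbnc\sigma)_b]^{\starb}=({}^{\star}\sigma)_b,$ conjugating the projector $\Id-\bcS_p'$ by $\starb$ gives $\Id-\cS_{n-p}',$ so the condition $(\Id-\bcS_p')(\dbnc\sigma^{pn})_b=0$ transforms into
$$(\Id-\cS_{n-p}')({}^{\star}\sigma^{pn})_b=0,$$
\ie the boundary values of ${}^{\star}\sigma^{pn}$ lie \emph{in} the range of the Szeg\H o projector --- not into $\cS_{n-p}'({}^{\star}\sigma^{pn})_b=0$ as you assert. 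This distinction is the entire content of the lemma. With your version, holomorphy of ${}^{\star}\sigma^{pn}$ gives $({}^{\star}\sigma^{pn})_b=\cS_{n-p}({}^{\star}\sigma^{pn})_b,$ so the two conditions together force $({}^{\star}\sigma^{pn})_b=0$ and hence ${}^{\star}\sigma^{pn}=0$; the mechanism behind~\eqref{04.19} that you invoke would then yield $\cH^{pn}_{\cR_-}(X_-)=0,$ flatly contradicting the isomorphism with $[H^{n-p,0}(X_-)]^\star$ you set out to prove. With the correct complementation, the transformed condition is satisfied \emph{automatically} by every holomorphic $(n-p,0)$-form, so no constraint beyond holomorphy survives, and ${}^{\star}$ is a bijection between $\cH^{pn}_{\cR_-}(X_-)$ and $H^{n-p,0}(X_-)$; checking both directions of exactly this correspondence is the paper's proof.

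Note also that your proposal is internally inconsistent on this point: your closing remark that holomorphic $(n-p,0)$-forms ``already satisfy the Szeg\H o constraint'' is true only for the correctly transformed condition, so the correct conclusion is reached by assertion rather than by the derivation you describe. The remaining ingredients are sound: the reduction in degree $n$ is right, and your route to the second isomorphism via the duality of Section~\ref{s.dualbc} works, though it is equivalent to (and less direct than) the paper's observation that for $\Id-\cR_+$ the degree-$n$ boundary condition $\eta_b=0$ is vacuous, since $\Lambda^{p,n}_b=0$ on $Y,$ so $\cH^{p,n}_{\Id-\cR_+}(X_-)$ consists precisely of the coclosed $(p,n)$-forms, whose ${}^{\star}$-images are the holomorphic $(n-p,0)$-forms. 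Finally, no appeal to the subelliptic estimates of Section~\ref{s.dualbc} is needed to verify bijectivity of the correspondence: the Hodge star is a pointwise isomorphism.
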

\begin{proof} A $(p,n)$-form $\sigma^{pn}$ belongs to $\cH^{pn}_{\cR_-}(X_-)$ provided
   that
$$\dbar^*\sigma^{pn}=0,\text{ and }(\Id-\bcS_p)(\dbar\rho\rfloor\sigma^{pn})_b=0.$$
  The identities in~\eqref{04.20}  imply that ${}^\star\sigma^{pn}\in
  H^{n-p,0}(X_-).$ 

On the other hand, if $\eta\in H^{n-p,0}(X_-),$ then $\dbar^* {}^\star\eta=0,$
and $(\Id-\cS_{n-p})\eta_b=0.$ The identities in~\eqref{strrel}
and~\eqref{04.5} imply that $(\Id-\bcS_{p})(\dbnc{}^{\star}\eta)_b=0.$ This shows
that ${}^\star\eta\in\cH^{pn}_{\cR_-}(X_-),$ completing the proof of the first
isomorphism. A form $\eta\in\cH^{p,n}_{\Id-\cR_+}(X_-)$ provided that
$\dbar^*\eta=0.$ The boundary condition $\eta_b=0$ is vacuous for a
$(p,n)$-form. This shows that ${}^{\star}\eta\in H^{n-p,0}(X_-),$ the converse
is immediate. 
\end{proof}

All that remains is $\cH^{p,n-1}_{\cR_-}(X_-).$ This space does not have as
simple a description as the others. We return to this question in the next
section. We finish this section with the observation that the results
in Section~\eqref{s.dualbc} imply the following duality
statements, for $0\leq q,p\leq n:$ 
\begin{equation}
[\cH^{p,q}_{\cR_+}(X_+)]^* \simeq \cH^{n-p,n-q}_{\Id-\cR_-}(X_+),\quad
[\cH^{p,q}_{\cR_-}(X_-)]^* \simeq \cH^{n-p,n-q}_{\Id-\cR_+}(X_-).
\end{equation}
The isomorphisms are realized by applying the Hodge star operator.

\section{Connection to $\eth_{\pm}$ and the Agranovich-Dynin formula}\label{s.agrdyn}
Thus far we have largely considered one $(p,q)$-type at a time. As noted in the
introduction, by grouping together the even,  or odd forms we obtain bundles of
complex spinors on which the Spin${}_{\bbC}$-Dirac operator acts. We let
\begin{equation}
\Lambda^{p,\even}=\bigoplus\limits_{q=0}^{\lfloor \frac{n}{2}
  \rfloor}\Lambda^{p,2q},\quad
\Lambda^{p,\odd}=\bigoplus\limits_{q=0}^{\lfloor \frac{n-1}{2} \rfloor}\Lambda^{p,2q+1}.
\end{equation}
The bundles $\Lambda^{p,\even},\Lambda^{p,\odd}$ are the basic complex spinor
bundles, $\Lambda^{\even},\Lambda^{\odd},$ twisted with the holomorphic vector
bundles $\Lambda^{p,0}.$ Unless it is needed for clarity, we do not include the
value of $p$ in the notation.

Assuming that the underlying manifold is a K\"ahler manifold, the
Spin${}_{\bbC}$-Dirac operator is $\eth=\dbar+\dbar^*.$ It maps even forms to
odd forms and we denote by
\begin{equation}
\eth^{\even}_{\pm}:\CI(X_{\pm};\Lambda^{p,\even})\longrightarrow\CI(X_{\pm};\Lambda^{p,\odd}),\,
\eth^{\odd}_{\pm}:\CI(X_{\pm};\Lambda^{p,\odd})\longrightarrow\CI(X_{\pm};\Lambda^{p,\even}).
\end{equation}
As noted above, the boundary projection operators $\cR_{\pm}$ (or $\cR_{\pm}'$)
can be divided into operators acting separately on even and odd forms,
$\cR_{\pm}^{\eo},$ ( $\cR_{\pm}^{\prime\eo}$). These boundary conditions
define subelliptic boundary value problems for $\eth_{\pm}^{\eo}$ that are
closely connected to the individual $(p,q)$-types.  The connection is via the
basic integration-by-parts formul\ae\ for $\eth_{\pm}^{\eo}.$ There are several
cases, which we present in a series of lemmas.
\begin{lemma}\label{lembv1} If $\sigma\in\CI(\bX_{\pm};\Lambda^{p,\eo})$  satisfies
    $\cR^{\prime\eo}_{+}\sigma\restrictedto_{bX_{\pm}}=0$ or
$(\Id-\cR^{\prime\eo}_{-})\sigma\restrictedto_{bX_{\pm}}=0,$  then
\begin{equation}
\langle\eth_{\pm}\sigma,\eth_{\pm}\sigma\rangle_{X_{\pm}}=
\langle\dbar\sigma,\dbar\sigma\rangle_{X_{\pm}}+
\langle\dbar^*\sigma,\dbar^*\sigma\rangle_{X_{\pm}}
\label{1.16.03.1}
\end{equation}
\end{lemma}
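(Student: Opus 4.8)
The plan is to prove the identity~\eqref{1.16.03.1} by expanding the left-hand side, integrating by parts, and showing that the boundary terms vanish under the stated boundary conditions. Since $\eth_{\pm}=\dbar+\dbar^*$ on a K\"ahler manifold, we have $\eth_{\pm}\sigma=\dbar\sigma+\dbar^*\sigma.$ Because $\dbar$ raises and $\dbar^*$ lowers the form degree by one, for a homogeneous $(p,q)$-form the components $\dbar\sigma$ and $\dbar^*\sigma$ land in different bundles ($\Lambda^{p,q+1}$ versus $\Lambda^{p,q-1}$) and are therefore pointwise orthogonal. Summing over the even (or odd) degrees appearing in $\Lambda^{p,\eo},$ the cross terms $\langle\dbar\sigma,\dbar^*\sigma\rangle$ pair a $(p,q+1)$-form against a $(p,q-1)$-form and so vanish identically in $L^2.$ Thus
\begin{equation}
\langle\eth_{\pm}\sigma,\eth_{\pm}\sigma\rangle_{X_{\pm}}=
\langle\dbar\sigma,\dbar\sigma\rangle_{X_{\pm}}+
\langle\dbar^*\sigma,\dbar^*\sigma\rangle_{X_{\pm}}+
2\Re\langle\dbar\sigma,\dbar^*\sigma\rangle_{X_{\pm}},
\end{equation}
and the entire content of the lemma is that the last term vanishes.

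First I would reduce the cross term to a boundary integral. Grouping $\sigma=\bigoplus_q\sigma^{pq},$ the relevant pairings are $\langle\dbar\sigma^{p,q-1},\dbar^*\sigma^{p,q+1}\rangle_{X_{\pm}}$ for consecutive degrees of the same parity. Applying the integration-by-parts formula for $\dbar$ and $\dbar^*$ (which produces a boundary contribution governed by the contraction $\dbar\rho\rfloor$ against $\dbar\rho\wedge,$ exactly the data appearing in~\eqref{neqn2}), each such term becomes a boundary integral over $bX_{\pm}$ of a pairing between $(\dbar\rho\rfloor\sigma)_b$ and $\sigma_b$ components in adjacent degrees. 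The key mechanism is the standard Green's formula: $\langle\dbar u,v\rangle_X-\langle u,\dbar^*v\rangle_X$ equals a boundary term built from $\dbar\rho\rfloor v$ paired against $u_b.$

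The main step is then to verify that these boundary terms vanish under each of the two hypotheses. For the condition $\cR^{\prime\eo}_+\sigma\restrictedto_{bX_{\pm}}=0,$ the block structure in~\eqref{7.27.1} shows that in the degrees where no modification is made, $(\dbar\rho\rfloor\sigma^{pq})_b=0$ (the classical $\dbar$-Neumann condition), which kills the boundary pairing outright; in the modified degrees $q=0,1$ the conditions $\cS_p'\sigma^{p0}_b=0$ and $(\Id-\cS_p')(\dbar\rho\rfloor\sigma^{p1})_b=0$ are complementary projections, and the surviving boundary pairing is of the form $\langle\cS_p'\sigma^{p0}_b,\,\ast\rangle+\langle(\Id-\cS_p')(\dbar\rho\rfloor\sigma^{p1})_b,\,\ast\rangle,$ each factor of which vanishes. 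The crucial algebraic point, which I expect to be the main obstacle, is checking that the boundary term coupling degree $0$ to degree $1$ splits precisely along the $\cS_p'$ versus $\Id-\cS_p'$ decomposition, so that the self-adjointness $\cS_p'^*=\cS_p'$ makes $\langle\cS_p'u,(\Id-\cS_p')w\rangle=0.$ The dual condition $(\Id-\cR^{\prime\eo}_-)\sigma\restrictedto_{bX_{\pm}}=0$ is handled by the identical mechanism, now using the block structure~\eqref{7.27.6} and the relations~\eqref{strrel},~\eqref{04.5} relating $\bcS_p'$ to $\cS_{n-p}',$ with the roles of the degree-$(n-1)$ and degree-$n$ boundary components playing the part formerly played by degrees $0$ and $1.$ In both cases the complementarity of the projectors, together with their self-adjointness, forces every boundary contribution to vanish, completing the proof.
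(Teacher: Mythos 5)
Your overall skeleton --- expand $\|\eth_{\pm}\sigma\|^2$, reduce the cross terms to boundary integrals, and kill them using the boundary conditions --- is the same as the paper's, and your handling of the unmodified degrees is correct. But the step you flag as ``the crucial algebraic point'' is a phantom, and the mechanism you propose for it is wrong. Since $\sigma$ is a section of $\Lambda^{p,\eo}$, of pure chirality, all of its components have degrees of the same parity; hence the only cross terms in $\|\eth_{\pm}\sigma\|^2$ are
\[
\langle\dbar\sigma^{pq},\dbar^*\sigma^{p(q+2)}\rangle_{X_{\pm}},
\]
coupling degrees that differ by two. There is no boundary term coupling the degree-$0$ component to the degree-$1$ component: a chiral spinor never contains both, so no such pairing can arise. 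Consequently there is nothing to ``split along the $\cS_p'$ versus $\Id-\cS_p'$ decomposition,'' and the self-adjointness of $\cS_p'$ plays no role anywhere in the argument.

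What actually happens is simpler, and it is the real content of the lemma. For $\cR^{\prime\eo}_+$: the form $\sigma^{p(q+2)}$ has degree at least $2$, so the hypothesis gives the \emph{classical} condition $\dbnc\sigma^{p(q+2)}\restrictedto_{bX_{\pm}}=0$; integration by parts (the paper's fact that $\langle\beta,\dbar^*\eta\rangle_{X_\pm}=\langle\dbar\beta,\eta\rangle_{X_\pm}$ when $\dbnc\eta\restrictedto_{bX_\pm}=0$) then yields
\[
\langle\dbar\sigma^{pq},\dbar^*\sigma^{p(q+2)}\rangle_{X_{\pm}}
=\langle\dbar^2\sigma^{pq},\sigma^{p(q+2)}\rangle_{X_{\pm}}=0 .
\]
Dually, for $(\Id-\cR^{\prime\eo}_-)\sigma\restrictedto_{bX_\pm}=0$ one uses $[\dbar^*]^2=0$ together with $\dbar\rho\wedge\sigma^{pq}\restrictedto_{bX_{\pm}}=0$, which holds in every degree $q\le n-2$, i.e., in every degree the cross terms reach. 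In both cases the Szeg\H o-modified conditions (degrees $0,1$ for $\cR_+'$; degrees $n-1,n$ for $\Id-\cR_-'$) are never touched. This is not cosmetic: the fact that the proof never invokes the projectors is exactly why the lemma holds for \emph{arbitrary} generalized Szeg\H o projectors, as the remark following the lemma emphasizes, whereas in Lemma~\ref{lembv3} the cross terms genuinely do reach the modified degrees and the classical projector is indispensable (there one needs $\dbarb\sigma^{p0}_b=0$, which follows from $\sigma^{p0}_b=\cS_p\sigma^{p0}_b$ only for the classical $\cS_p$). A proof that leaned on complementarity and self-adjointness of $\cS_p'$, as yours proposes, would obscure precisely this distinction.
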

\begin{remark} Note that when using the boundary conditions defined by $\cR_+$
  and $\Id-\cR_-,$ we are able to use a generalized Szeg\H o projector,
  unconnected to the complex structure on $X_{\pm}.$ This is not always true
  for $\cR_-$ and $\Id-\cR_+.$ See Lemmas~\ref{lembv2} and~\ref{lembv3}.
\end{remark}
\begin{proof}
The proof for $\cR_{\pm}^{\prime\eo}$ is a consequence of the facts that
\begin{enumerate}
\item[(a)] $\dbar^2=0$
\item[(b)] If $\eta$ is a $(p,j)$-form satisfying
  $\dbar\rho\rfloor\eta\restrictedto_{bX_{\pm}}=0,$ then, for $\beta$ any\newline
 smooth  $(p,j-1)$-form we have
\begin{equation}
\langle\beta,\dbar^*\eta\rangle_{X_{\pm}}=
\langle\dbar\beta,\eta\rangle_{X_{\pm}}.
\label{1.16.03.2}
\end{equation}
\end{enumerate}
We need to show that
\begin{equation}
\langle\dbar\sigma^{pq},\dbar^*\sigma^{p(q+2)}\rangle_{X_{\pm}}=0.
\label{eqn6.24.1}
\end{equation}
This follows immediately from (a), (b), and the fact that $\sigma^{p(q+2)}$
satisfies 
$$\dbnc\sigma^{p(q+2)}=0,\text{ for all }q\geq 0.$$

In the proof for $\Id-\cR^{\prime\eo}_{-},$ we replace (a) and (b) above with
\begin{enumerate}
\item[(a${}^{\prime}$)] $[\dbar^*]^2=0$
\item[(b${}^{\prime}$)] If $\eta$ is a $(p,j)$-form satisfying
  $\dbar\rho\wedge\eta\restrictedto_{bX_{\pm}}=0,$ then, for $\beta$ any\newline
  smooth  $(p,j+1)$-form we have
\begin{equation}
\langle\beta,\dbar\eta\rangle_{X_{\pm}}=
\langle\dbar^*\beta,\eta\rangle_{X_{\pm}}.
\label{1.16.03.22}
\end{equation}
\end{enumerate}
Since $(\Id-\cR^{\prime\eo}_{-})\sigma\restrictedto_{bX_{\pm}}=0$ implies that 
$\dbar\rho\wedge\sigma^{pq}\restrictedto_{bX_{\pm}}=0,$
holds for $q<n-1,$ the relation in~\eqref{eqn6.24.1} holds for all $q$ of
interest. This case could also be treated by observing that it is dual to $\cR_+'.$
\end{proof}

Now we consider $\cR_-$ and $\Id-\cR_+.$ Let $b_n$ denote the parity (even or odd) of $n,$
  and $\tb_n$ the opposite parity.
\begin{lemma}\label{lembv2} 
  If a section $\sigma\in\CI(\bX_{\pm};\Lambda^{p,\odd})$ satisfies
  $(\Id-\cR^{\prime\odd}_{+})\sigma\restrictedto_{bX_{\pm}}=0,$ or 
$\sigma\in\CI(\bX_{\pm};\Lambda^{p,\tb_n})$ satisfies
  $\cR^{\prime \tb_n}_{-}\sigma\restrictedto_{bX_{\pm}}=0,$
  then~\eqref{1.16.03.1} holds. 
\end{lemma}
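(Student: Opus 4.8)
The plan is to follow the same route as in Lemma~\ref{lembv1}: expand $\langle\eth_{\pm}\sigma,\eth_{\pm}\sigma\rangle_{X_\pm}$ using $\eth_\pm=\dbar+\dbar^*,$ and observe that, since $\dbar$ raises and $\dbar^*$ lowers the form degree by one, the only off-diagonal contributions to $\langle(\dbar+\dbar^*)\sigma,(\dbar+\dbar^*)\sigma\rangle_{X_\pm}$ couple $\sigma^{pq}$ to $\sigma^{p(q+2)}.$ Thus \eqref{1.16.03.1} is again equivalent to the vanishing of the cross terms \eqref{eqn6.24.1}, $\langle\dbar\sigma^{pq},\dbar^*\sigma^{p(q+2)}\rangle_{X_\pm}=0,$ for every $q$ of the parity under consideration. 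The whole content of the lemma is therefore to check that, for the two parities singled out here, each such cross term is killed by a single integration by parts together with $\dbar^2=0$ or $[\dbar^*]^2=0.$

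For the condition $(\Id-\cR_+^{\prime\odd})$ I would use facts (a${}'$) and (b${}'$) from the proof of Lemma~\ref{lembv1}. Reading off \eqref{7.27.1}, the requirement $(\Id-\cR_+')\sigma\restrictedto_{bX_\pm}=0$ forces the tangential part $\sigma^{pq}_b$ to vanish in every degree $q\geq 1,$ which by \eqref{dldbrnc2} is exactly the dual $\dbar$-Neumann condition $\dbar\rho\wedge\sigma^{pq}\restrictedto_{bX_\pm}=0.$ Since $\sigma$ is odd, only degrees $q\geq 1$ occur, so for each cross term I may apply (b${}'$) with $\eta=\sigma^{pq}$ and $\beta=\dbar^*\sigma^{p(q+2)},$ and then (a${}'$), to get $\langle\dbar^*\sigma^{p(q+2)},\dbar\sigma^{pq}\rangle_{X_\pm}=\langle[\dbar^*]^2\sigma^{p(q+2)},\sigma^{pq}\rangle_{X_\pm}=0;$ conjugating gives \eqref{eqn6.24.1}.

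For the condition $\cR_-^{\prime\tb_n}$ I would instead use (a) and (b). From \eqref{7.27.6}, $\cR_-'\sigma\restrictedto_{bX_-}=0$ imposes the plain Neumann condition $\dbnc\sigma^{pq}\restrictedto_{bX_-}=0$ for every $q$ with $1\leq q\leq n-1$ (the modified conjugate Szeg\H o condition there acts only on the tangential part $\bsigma^{p(n-1)}$). As $\sigma$ has parity $\tb_n,$ the highest degree present is $n-1,$ so in every cross term the upper index satisfies $q+2\leq n-1$ and hence $\dbnc\sigma^{p(q+2)}=0.$ Applying (b) with $\eta=\sigma^{p(q+2)}$ and $\beta=\dbar\sigma^{pq},$ and then (a), yields $\langle\dbar\sigma^{pq},\dbar^*\sigma^{p(q+2)}\rangle_{X_\pm}=\langle\dbar^2\sigma^{pq},\sigma^{p(q+2)}\rangle_{X_\pm}=0,$ which is \eqref{eqn6.24.1}.

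The point to emphasize, and the reason the parity restriction appears, is that the Szeg\H o-modified boundary conditions sit precisely at the degrees excluded by these parities: the modified degree $0$ condition $(\Id-\cS_p')\sigma^{p0}_b=0$ never enters the odd case of $\Id-\cR_+',$ and the modified top-degree condition $(\Id-\bcS_p')(\dbnc\sigma^{pn})_b=0$ never enters the $\tb_n$ case of $\cR_-'.$ Consequently only genuine Neumann or dual Neumann conditions are used, the argument is insensitive to which generalized Szeg\H o projector was chosen, and the computation reduces to that of Lemma~\ref{lembv1}. The only real obstacle is the bookkeeping, namely confirming that for these two parities the problematic degree ($0,$ resp.\ $n$) is genuinely absent, so that the weaker integration-by-parts hypotheses suffice; the complementary parities, where that degree \emph{is} present, are exactly what will force the use of a complex-structure-compatible projector in Lemma~\ref{lembv3}.
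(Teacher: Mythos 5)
Your proposal is correct and follows essentially the same route as the paper: the paper's proof likewise reduces \eqref{1.16.03.1} to the vanishing of the cross terms \eqref{eqn6.24.1}, writes each as a boundary pairing, and kills it using the dual $\dbar$-Neumann conditions (present in all odd degrees $q\geq 1$ for $\Id-\cR_+^{\prime\odd}$) in the first case and the plain $\dbar$-Neumann conditions (present in all degrees $q+2\leq n-1$ for $\cR_-^{\prime\tb_n}$) in the second. Your closing observation -- that the parity restrictions exclude exactly the degrees whose modified conditions would require the classical Szeg\H o projector, which is why generalized projectors suffice here but not in Lemma~\ref{lembv3} -- matches the remark following the lemma in the paper.
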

\begin{remark} In these cases we can again use generalized Szeg\H o projectors.
\end{remark}
\begin{proof} The proofs here are very much as before. For $\Id-\cR_+^{\prime\odd}$ we use
  the fact that
\begin{equation}
\langle\dbar\sigma^{pq},\dbar^*\sigma^{p(q+2)}\rangle_{X_{\pm}}=
\langle\dbar\rho\wedge\sigma^{pq},\dbar^*\sigma^{p(q+2)}\rangle_{bX_{\pm}},
\end{equation}
and this vanishes if $q\geq 1.$ For $\cR_-^{\prime\tb_n}$ we use
  the fact that
\begin{equation}
\langle\dbar\sigma^{pq},\dbar^*\sigma^{p(q+2)}\rangle_{X_{\pm}}=
\langle\dbar\rho\wedge\sigma^{pq},\dbnc\sigma^{p(q+2)}\rangle_{bX_{\pm}},
\end{equation}
and this vanishes if $q<n-2.$
\end{proof}

In the final cases we are restricted to the boundary conditions which employ
the classical Szeg\H o projector defined by the complex structure on $X_{\pm}.$
\begin{lemma}\label{lembv3} 
  If a section $\sigma\in\CI(\bX_{\pm};\Lambda^{p,\even})$ satisfies
  $(\Id-\cR^{\even}_{+})\sigma\restrictedto_{bX_{\pm}}=0,$ or 
$\sigma\in\CI(\bX_{\pm};\Lambda^{p,b_n})$ satisfies
  $\cR^{b_n}_{-}\sigma\restrictedto_{bX_{\pm}}=0,$ then~\eqref{1.16.03.1}
  holds. 
\end{lemma}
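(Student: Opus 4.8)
The plan is to follow the same integration-by-parts strategy used in Lemmas~\ref{lembv1} and~\ref{lembv2}, reducing the identity~\eqref{1.16.03.1} to showing that the cross terms $\langle\dbar\sigma^{pq},\dbar^*\sigma^{p(q+2)}\rangle_{X_{\pm}}$ all vanish. Expanding $\langle\eth\sigma,\eth\sigma\rangle$ for $\eth=\dbar+\dbar^*$ produces, besides the desired terms $\|\dbar\sigma\|^2+\|\dbar^*\sigma\|^2$, exactly these cross terms (the diagonal interactions cancel by bidegree considerations, since $\dbar\sigma^{pq}$ lands in $\Lambda^{p,q+1}$ and $\dbar^*\sigma^{pq}$ in $\Lambda^{p,q-1}$). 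So the whole content is the vanishing~\eqref{eqn6.24.1} for the relevant range of $q$.

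First I would treat the case $(\Id-\cR^{\even}_{+})\sigma\restrictedto_{bX_{\pm}}=0$. As in Lemma~\ref{lembv2}, I integrate by parts to write $\langle\dbar\sigma^{pq},\dbar^*\sigma^{p(q+2)}\rangle_{X_{\pm}}=\langle\dbar\rho\wedge\sigma^{pq},\dbar^*\sigma^{p(q+2)}\rangle_{bX_{\pm}}$, using $[\dbar^*]^2=0$ and the boundary pairing. For $\sigma$ even, the degrees $q$ that arise are $q=0,2,4,\dots$; the term with $q\geq 2$ vanishes because the dual condition forces $\dbar\rho\wedge\sigma^{pq}\restrictedto_{bX_{\pm}}=0$ there, exactly as before. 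The genuinely new difficulty, and the reason we are now forced to use the classical Szeg\H o projector, is the boundary term at $q=0$, namely $\langle\dbar\rho\wedge\sigma^{p0},\dbar^*\sigma^{p2}\rangle_{bX_{\pm}}$. Here the dual boundary condition $(\Id-\bcS_p')$ has been modified precisely in the top/relevant degree, so I cannot simply invoke $\dbar\rho\wedge\sigma^{p0}\restrictedto=0$.

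The main obstacle, then, is handling this single leftover boundary term. I expect to rewrite the pairing using the star relations~\eqref{strrel} and~\eqref{04.5} to transport it to the boundary $\dbarb$-complex, converting $\dbar\rho\wedge\sigma^{p0}$ and $\dbar^*\sigma^{p2}$ into tangential objects so that the pairing becomes $\langle (\sigma^{p0})_b,\dbarb^*(\cdots)\rangle$ or similar. The point is that the modified condition involves the \emph{exact} Szeg\H o projector $\cS_p$ (equivalently $\bcS_p$), which is characterized by $\bcS_p\dbarb=0$ and $\dbarb^*\cS_p=0$; this intertwining with the genuine tangential Cauchy-Riemann operator is what makes the leftover boundary pairing collapse. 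A generalized Szeg\H o projector $\cS_p'$ does not commute with $\dbarb$ in this way, which is exactly why Lemma~\ref{lembv3} is restricted to the classical projector (cf.\ the remark preceding it). Once this term is shown to vanish, the pseudoconvex half is complete.

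For the pseudoconcave statement $\cR^{b_n}_{-}\sigma\restrictedto_{bX_{\pm}}=0$ with $\sigma\in\CI(\bX_{\pm};\Lambda^{p,b_n})$, I would proceed by the dual/symmetric argument. Using $\dbar^2=0$ and the pairing~\eqref{1.16.03.2}, I reduce to $\langle\dbar\rho\rfloor\sigma^{pq},\dbar^*\sigma^{p(q+2)}\rangle$-type boundary terms, which vanish for all but the critical degree near $q=n-1$; the surviving term is again controlled by the classical conjugate Szeg\H o projector via $\bcS_p\dbarb=0$ and the relations~\eqref{strrel},~\eqref{04.17}. Alternatively, since $\cR_-$ and $\Id-\cR_+$ are Hodge-star duals of one another (as recorded in Section~\ref{s.dualbc}), I would note that applying $\star$ carries the pseudoconcave identity to the pseudoconvex one already established, so this case follows formally from the first without a separate computation.
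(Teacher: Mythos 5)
Your proposal is correct and follows essentially the same route as the paper's proof: reduce \eqref{1.16.03.1} to the vanishing of the cross terms \eqref{eqn6.24.1}, dispose of the non-critical degrees exactly as in Lemma~\ref{lembv2}, and kill the single remaining boundary term ($q=0$ for $\Id-\cR^{\even}_+$, $q=n$ for $\cR^{b_n}_-$) by using that the range of the classical projector $\cS_p$ (resp.\ $\bcS_p$) is exactly $\ker\dbarb$ (resp.\ $\ker\dbarb^*$), which is precisely the paper's mechanism. Your Hodge-star shortcut for the pseudoconcave case is also legitimate (the paper invokes the same duality trick at the end of Lemma~\ref{lembv1}); the only blemishes are notational: the $q=0$ condition is $(\Id-\cS_p)\sigma^{p0}_b=0$, not $(\Id-\bcS_p')$, and the intertwining relations should read $\dbarb\cS_p=0$ and $\dbarb^*\bcS_p=0$.
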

\begin{proof}
First we consider  $\Id-\cR^{\even}_{+}.$ For even $q\geq 2,$ the proof
given above shows that~\eqref{eqn6.24.1} holds; so we are
left to consider $q=0.$ The boundary condition satisfied by $\sigma^{p0}$ is
$(\Id-\cS_p)\sigma^{p0}_b=0.$ Hence, we have
\begin{equation}
\begin{split}
\langle\dbar\sigma^{p0},\dbar^*\sigma^{p2}\rangle_{X_{\pm}}&=
\langle \dbar\sigma^{p0}_b,\dbnc\sigma^{p2}\rangle_{bX_{\pm}}\\
&=\langle \dbar\rho\wedge\dbar\sigma^{p0}_b,\sigma^{p2}\rangle_{bX_{\pm}}=0.
\end{split}
\end{equation}
The last equality follows because $\dbar\rho\wedge\dbar\sigma^{p0}=0$ if
$\dbarb\sigma^{p0}_b=0.$ 

Finally we consider $\cR_-.$  The proof given above suffices for $q<n.$ We need
to consider $q=n;$ in this case $(\Id-\bcS_p)(\dbnc\sigma^{pn})_b=0.$ We begin
by observing that
\begin{equation}
\begin{split}
\langle\dbar\sigma^{p(n-2)},\dbar^*\sigma^{pn}\rangle_{X_{\pm}}&=
\langle \dbarb\sigma^{p(n-2)}_b,(\dbnc\sigma^{pn})_b\rangle_{bX_{\pm}}\\
&=\langle \sigma^{p(n-2)}_b,\dbarb^*(\dbnc\sigma^{pn})_b\rangle_{bX_{\pm}}=0.
\end{split}
\end{equation}
The last equality follows from fact that$(\dbnc\sigma^{pn})_b=\bcS_p(\dbnc\sigma^{pn})_b.$
\end{proof}

In all cases where~\eqref{1.16.03.1} holds we can identify the null spaces of
the operators $\eth^{\eo}_{\pm}.$ Here we stick to the pseudoconvex side and
boundary conditions defined by the classical Szeg\H o projectors.  It
follows from~\eqref{1.16.03.1} that
\begin{equation}
\begin{split}
\Ker(\eth^{\even}_{p+},\cR^{\even}_+)&
=\bigoplus_{j=1}^{\lfloor \frac n2\rfloor}\cH_{\dbar}^{p,2j}(X_+),\\
\Ker(\eth^{\odd}_{p+},\cR^{\odd}_+)&=
E^{p,1}_0\oplus \bigoplus_{j=1}^{\lfloor \frac
  {n-1}2\rfloor}\cH_{\dbar}^{p,2j+1}(X_+)
\end{split}
\label{1.16.03.10}
\end{equation}
In~\cite{Epstein3} we identify the $L^2$-adjoints of the operators
$(\eth^{\eo}_{\pm},\cR^{\prime\eo}_{\pm}) $with the graph closures of the
formal adjoints, e.g,
\begin{equation}
\begin{split}
(\eth_{+}^{\eo},\cR_{+}^{\prime\eo})^*&=
\overline{(\eth_{+}^{\ooee},\cR_{+}^{\prime\ooee})}\\
(\eth_{-}^{\eo},\cR_{-}^{\prime\eo})^*&=
\overline{(\eth_{-}^{\ooee},\cR_{-}^{\prime\ooee})}.
\end{split}
\end{equation}
Using these identities, the Dolbeault isomorphism and standard facts about the
$\dbar$-Neumann problem on a strictly pseudoconvex domain, we obtain that
\begin{equation}
\Ind(\eth^{\even}_{p+},\cR^{\even}_+)=-\dim E^{p,1}_0+\sum_{q=1}^n(-1)^q\dim
H^{p,q}(X_+).
\end{equation}

Recall that if $\cS_p'$ and $\cS_p''$ are generalized Szeg\H o projectors, then
their relative index $\Rind(\cS_p',\cS_p'')$ is defined to be the Fredholm
index of the restriction
\begin{equation}
\cS_p'':\Im\cS_p'\longrightarrow\Im\cS_p''.
\end{equation}
For the pseudoconvex side we now prove an Agranovich-Dynin type formula.
\begin{theorem}\label{thm3.3}
Let $X_+$ be a compact strictly pseudoconvex K\"ahler manifold, with $\cS_p$
the classical Szeg\H o projector, defined as the projector onto the null space
of $\dbarb$ acting on $\CI(bX_+;\Lambda^{p,0}_b).$ If $\cS_p'$ is a generalized
Szeg\H o projector,  then
\begin{equation}
\Ind(\eth^{\even}_+,\cR^{\prime\even}_+)-
\Ind(\eth^{\even}_+,\cR^{\even}_+)=\Rind(\cS_p,\cS_p').
\end{equation}
\end{theorem}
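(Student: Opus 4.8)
The theorem compares the indices of the Spin$_{\bbC}$-Dirac operator $\eth^{\even}_+$ subject to two boundary conditions $\cR^{\even}_+$ and $\cR^{\prime\even}_+$ that differ \emph{only} in degree $0$ (and correspondingly in degree $1$), where the classical Szeg\H o projector $\cS_p$ is replaced by a generalized one $\cS_p'$. Since the two boundary problems agree in all other form degrees, the difference in the indices should be entirely localized to the degree-$0$/degree-$1$ interaction, and the natural candidate for the answer is the relative index $\Rind(\cS_p,\cS_p')$ of the two projectors on the boundary. The strategy is therefore to compute each index as an alternating sum of dimensions of null spaces of the adjoint pairs, using the identification of the $L^2$-adjoint of $(\eth^{\even}_+,\cR^{\prime\even}_+)$ with the graph closure of $(\eth^{\odd}_+,\cR^{\prime\odd}_+)$ (quoted from~\cite{Epstein3}), and then to show that all terms cancel except for a contribution measuring the ``overlap'' of $\Im\cS_p$ with $\Im\cS_p'$.

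First I would write, using Lemma~\ref{lembv1} and the Hodge-type decompositions of Section~\ref{s.highnorm}, the kernels and cokernels of both operators as direct sums over even (resp.\ odd) degrees of the harmonic spaces $\cH^{p,q}_{\cR_+}$ and $\cH^{p,q}_{\cR'_+}$. By~\eqref{04.19} these harmonic spaces coincide with the $\dbar$-Neumann harmonic spaces $\cH^{p,q}_{\dbar}(X_+)$ for $q>1$, and vanish for $q=0$; so they are \emph{identical} for the two boundary conditions in every degree $q\neq 1$. Hence $\Ind(\eth^{\even}_+,\cR^{\prime\even}_+)-\Ind(\eth^{\even}_+,\cR^{\even}_+)$ reduces to the difference $\dim\cH^{p,1}_{\cR'_+}(X_+)-\dim\cH^{p,1}_{\cR_+}(X_+)$ (the degree-$1$ space lies in the \emph{odd} bundle, so it enters the index with a single sign). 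The remaining task is to compute this difference.

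Next I would adapt the argument of Lemma~\ref{lem17}, which shows $\cH^{p,1}_{\cR_+}(X_+)\simeq\cH^{p,1}_{\dbar}(X_+)\oplus E^{p,1}_0$ with the second summand parametrized by $\Im\cS_p\ominus\cH^{p,0}_{\dbar}(X_+)\restrictedto_{bX_+}$. The same construction goes through with $\cS_p$ replaced by $\cS_p'$, giving $\cH^{p,1}_{\cR'_+}(X_+)/\cH^{p,1}_{\dbar}(X_+)\simeq \Im\cS_p'\ominus\cH^{p,0}_{\dbar}(X_+)\restrictedto_{bX_+}$ (here one uses the $\dbar$-like operator $D_E$ of~\eqref{eqn6.17.2} to make sense of the extension step for a generalized projector, which is why the hypotheses only require $\cS_p'$ to be a generalized Szeg\H o projector). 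Since $\cH^{p,1}_{\dbar}(X_+)$ is common to both, the index difference equals the difference of the ``extra'' dimensions, i.e.
\begin{equation*}
\dim\bigl(\Im\cS_p'\ominus W\bigr)-\dim\bigl(\Im\cS_p\ominus W\bigr),
\qquad W:=\cH^{p,0}_{\dbar}(X_+)\restrictedto_{bX_+}.
\end{equation*}
Finally I would recognize this quantity as the Fredholm index of the map $\cS_p':\Im\cS_p\to\Im\cS_p'$, which is exactly $\Rind(\cS_p,\cS_p')$: both $\Im\cS_p$ and $\Im\cS_p'$ contain the finite-codimensional common subspace $W$ (the boundary values of holomorphic $(p,0)$-forms lie in the range of \emph{every} generalized Szeg\H o projector), so the relative index is computed by comparing their quotients by $W$, matching the displayed difference.

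\textbf{The main obstacle.}
The delicate point is the passage from the \emph{classical} construction in Lemma~\ref{lem17} to the \emph{generalized} projector $\cS_p'$. In the classical case the existence step uses $\dbar_b\alpha_b=0\iff\cS_p\alpha_b=\alpha_b$ and the fact that $\dbar\rho\wedge\dbar\sigma^{p0}=0$ when $\dbarb\sigma^{p0}_b=0$; for a generalized projector one must replace $\dbarb$ by the operator $D_E$ of~\eqref{eqn6.17.2} and verify that the Neumann-type solvability argument (producing $b=G^{p,0}_{\dbar}\dbar^*\dbar(\rho\ta)$ with $\dbnc\dbar b=0$) still yields an element of $\cH^{p,1}_{\cR'_+}$. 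I expect the bookkeeping showing that the harmonic space is finite-dimensional and that the extension map is Fredholm with the stated index to require care, but no genuinely new analytic input beyond the subelliptic estimates of the earlier sections; the conceptual heart is simply that replacing $\cS_p$ by $\cS_p'$ shifts the degree-$1$ null space by precisely the relative index of the two projectors.
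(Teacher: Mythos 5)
There is a genuine gap, and it sits exactly at the point your plan treats as routine. Your reduction of the index difference to degree $1$ alone rests on the claim that the degree-$0$ harmonic spaces vanish for \emph{both} boundary conditions. But~\eqref{04.19} asserts $\cH^{p,0}_{\cR_+}(X_+)=0$ only for the \emph{classical} condition $\cR_+$: there the argument is that $\dbar\sigma^{p0}=0$ forces $\cS_p\sigma^{p0}_b=\sigma^{p0}_b$, so the boundary condition $\cS_p\sigma^{p0}_b=0$ kills $\sigma^{p0}$. For a generalized projector this fails, because the boundary value of a holomorphic $(p,0)$-form need \emph{not} lie in $\Im\cS_p'$. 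Your parenthetical justification --- that ``the boundary values of holomorphic $(p,0)$-forms lie in the range of every generalized Szeg\H o projector'' --- is false: $\Im\cS_p'$ is the null space of a $\dbarb$-like operator $D_E$ as in~\eqref{eqn6.17.2}, which differs from $\dbarb$ both in its lower-order terms and possibly in its underlying almost complex structure, so there is no containment of $W=\cH^{p,0}_{\dbar}(X_+)\restrictedto_{bX_+}$ in $\Im\cS_p'$. Consequently $\cH^{p,0}_{\cR'_+}(X_+)$ is in general nonzero; the paper identifies it as
\begin{equation*}
\cH^{p,0}_{\cR^{\prime}_+}(X_+)\simeq \Ker\bigl[\cS_p':\Im\hcS_p\to\Im\cS_p'\bigr],
\end{equation*}
where $\hcS_p$ is the orthogonal projection onto $W.$ Dropping this term is not a cosmetic omission: it is precisely the \emph{kernel} half of the relative index $\Rind(\hcS_p,\cS_p'),$ and without it the degree-$1$ count cannot produce $\Rind(\cS_p,\cS_p').$ (Your quantity $\dim(\Im\cS_p'\ominus W)-\dim(\Im\cS_p\ominus W)$, correctly interpreted as $\dim(\Im\cS_p'\cap W^{\bot})-\dim(\Im\cS_p\cap W^{\bot}),$ is not the relative index once $W\not\subset\Im\cS_p'$; it also enters the index of $\eth^{\even}_+$ with the opposite sign, since degree $1$ is odd --- in your sketch this sign slip is cancelled by a second sign slip in the final identification, but both steps as stated are incorrect.)

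The paper's actual bookkeeping is worth internalizing because it shows how the two defective degrees cooperate. One introduces the subprojector $\hcS_p$ of $\cS_p$ and proves three facts: (i) $\cH^{p,0}_{\cR'_+}(X_+)\simeq\Ker[\cS_p':\Im\hcS_p\to\Im\cS_p']$; (ii) $\cH^{p,1}_{\cR'_+}(X_+)/\cH^{p,1}_{\dbar}(X_+)\simeq\Ker[\hcS_p:\Im\cS_p'\to\Im\hcS_p]$ (your adaptation of Lemma~\ref{lem17} is essentially this statement, and that part of your plan is sound --- the extension argument does go through for generalized projectors); and (iii) $\Rind(\cS_p,\hcS_p)=\dim E^{p,1}_0$, which accounts for the classical side via Lemma~\ref{lem17}. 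The degree-$0$ space contributes $+\dim\Ker[\cS_p'|_{\Im\hcS_p}]$ and the degree-$1$ spaces contribute $-\bigl(\dim\Ker[\hcS_p|_{\Im\cS_p'}]-\dim E^{p,1}_0\bigr)$ to the index difference; the first two terms assemble into $\Rind(\hcS_p,\cS_p')$ (kernel minus cokernel of one Fredholm map), and then the cocycle formula gives $\Rind(\hcS_p,\cS_p')+\Rind(\cS_p,\hcS_p)=\Rind(\cS_p,\cS_p').$ To repair your proposal you would need to restore the degree-$0$ space, replace the false containment $W\subset\Im\cS_p'$ by the subprojector mechanism, and fix the signs; at that point you have reproduced the paper's proof.
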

\begin{proof} It follows from Lemma~\ref{lembv1} that all other groups are the
  same, so we only need to compare
  $\cH^{p,0}_{\cR^{\prime}_+}(X_+)$ to $\cH^{p,0}_{\cR^+}(X_+)$ and
  $\cH^{p,1}_{\cR^{\prime}_+}(X_+)$ to $\cH^{p,1}_{\cR^+}(X_+).$ For this
  purpose we introduce the subprojector $\hcS_p$ of $\cS_p,$ defined to be the
  orthogonal projection onto $\cH^{p,0}_{\dbar}(X_+)\restrictedto_{bX_+}.$ Note
  that
\begin{equation}
\Rind(\cS_p,\hcS_p)=\dim E^{p,1}_0.
\label{6.25.5}
\end{equation}

The $q=0$ case is quite easy. The group
  $\cH^{p,0}_{\cR^+}(X_+)=0.$ A section
  $\sigma^{p0}\in\cH^{p,0}_{\cR^{\prime}_+}(X_+),$ if and only if
  $\dbar\sigma^{p0}=0$ and $\cS_p'\sigma^{p0}_b=0.$ The first condition
  implies that $\sigma^{p0}_b\in\Im\hcS_p.$ Conversely, if
  $\eta\in\Ker[\cS_p':\Im\hcS_p\to\Im\cS_p'],$ then there is a unique
  holomorphic $(p,0)$-form $\sigma^{p0}$ with $\sigma^{p0}_b=\eta.$ This shows
  that
\begin{equation}
\cH^{p,0}_{\cR^{\prime}_+}(X_+)\simeq \Ker[\cS_p':\Im\hcS_p\to\Im\cS_p'].
\label{6.25.4}
\end{equation}

Now we turn to the $q=1$ case. No matter which boundary projection is used
\begin{equation}
\cH^{p,1}_{\dbar}(X_+)\subset\cH^{p,1}_{\cR^{\prime}_+}(X_+).
\label{6.25.1}
\end{equation}
As shown in Lemma~\ref{lem17}
\begin{equation}
\frac{\cH^{p,1}_{\cR_+}(X_+)}{\cH^{p,1}_{\dbar}(X_+)}\simeq E^{p,1}_0.
\label{6.25.3}
\end{equation} 
Now suppose that $\sigma^{p1}\in\cH^{p,1}_{\cR^{\prime}_+}(X_+)$ and
$\eta\in\cH^{p,0}_{\dbar}(X_+),$ then
\begin{equation}
0=\langle\dbar\eta,\sigma^{p1}\rangle_{X_+}=\langle\eta,(\dbnc\sigma^{p1})_b\rangle_{bX_+}.
\end{equation}
Hence $(\dbnc\sigma^{p1})_b\in\Ker[\hcS_p:\Im\cS_p'\to\Im\hcS_p].$

To complete the proof we need to show that for
$\eta_b\in\Ker[\hcS_p:\Im\cS_p'\to\Im\hcS_p]$ there is a harmonic $(p,1)$-form,
$\sigma^{p1}$ with $(\dbnc\sigma^{p1})_b=\eta_b.$ Let $\eta$ denote a smooth
extension of $\eta_b$ to $X_+.$ We need to show that there is a $(p,0)$ form
$\beta$ such that
\begin{equation}
\dbar^*\dbar(\rho\eta)=\dbar^*\dbar\beta\text{ and }(\dbnc\dbar\beta)_b=0.
\end{equation}
This follows from the fact that $\hcS_p\eta_b=0,$ exactly as in the proof of
Lemma~\ref{lem17}. Hence $\sigma^{p1}=\dbar(\rho\eta-\beta)$ is an element of
$\cH^{p,1}_{\cR^{\prime}_+}(X_+)$ such that $(\dbnc\sigma^{p1})_b=\eta_b.$ This
shows that
\begin{equation}
\frac{\cH^{p,1}_{\cR^{\prime}_+}(X_+)}{\cH^{p,1}_{\dbar}(X_+)}\simeq 
\Ker[\hcS_p:\Im\cS_p'\to\Im\hcS_p].
\label{6.25.2}
\end{equation}
Combining~\eqref{6.25.3} with~\eqref{6.25.2} we obtain that
\begin{equation}
\dim\cH^{p,1}_{\cR^{\prime}_+}(X_+)-\dim\cH^{p,1}_{\cR_+}(X_+)=
\dim \Ker[\hcS_p:\Im\cS_p'\to\Im\hcS_p]-\dim E^{p,1}_0.
\end{equation}
Combining this with~\eqref{6.25.4} and~\eqref{6.25.5} gives
\begin{equation}
\Ind(\eth_+^{\even},\cR^{\prime}_+)-\Ind(\eth_+^{\even},\cR_+)=
\Rind(\hcS_p,\cS_p')+\Rind(\cS_p,\hcS_p)=\Rind(\cS_p,\cS_p').
\end{equation}
The last equality follows from the cocycle formula for the relative index.
\end{proof}

\section{Long exact sequences and gluing formul\ae}\label{s.lexseq}

Suppose that $X$ is a compact complex manifold with a separating strictly
pseudoconvex hypersurface $Y.$ Let $X\setminus Y=X_+\coprod X_-,$ with $X_+$
strictly pseudoconvex and $X_-$ strictly pseudoconcave. A principal goal of
this paper is to express 
$$\chi^p_{\cO}(X)=\sum_{q=0}^n (-1)^q\dim H^{p,q}(X),$$ 
in terms of indices of operators on $X_{\pm}.$ Such results are classical for
topological Euler characteristic and Dirac operators with elliptic boundary
conditions, see for example Chapter 24 of~\cite{BBW}. In this section we modify
long exact sequences given by Andreotti and Hill in order to prove such results
for subelliptic boundary conditions.

The Andreotti-Hill sequences relate the smooth cohomology groups
 $$H^{p,q}(\bX_{\pm},\cI),\quad H^{p,q}(\bX_{\pm}),\quad \text{and
 }H^{p,q}_b(Y).$$ 
The notation $\bX_{\pm}$ is intended to remind the reader that these are
 cohomology groups defined by the $\dbar$-operator acting on forms that are
 smooth on the closed manifolds with boundary, $\bX_{\pm}.$ The differential
 ideal $\cI$ is composed of forms, $\sigma,$ so that near $Y,$ we have
\begin{equation}
\sigma=\dbar\rho\wedge\alpha+\rho\beta.
\end{equation}
These are precisely the forms that satisfy the dual $\dbar$-Neumann
condition~\eqref{dldbrnc}. If $\xi$ is a form defined on all of $X,$ then we use
the shorthand notation
$$\xi_{\pm}\overset{d}{=}\xi\restrictedto_{X_{\pm}}.$$

For a strictly pseudoconvex manifold, it follows from the Hodge decomposition
and the results in Section~\ref{s.nullsp} that
\begin{equation}
\begin{split}
H^{p,q}(\bX_+)\simeq\cH^{p,q}_{\dbar}(X_+)&\text{ for }q\neq 0,\text{ and }\\
H^{p,q}(\bX_+)\simeq\cH^{p,q}_{\cR_+}(X_+)&\text{ for }q\neq 0,1,
\end{split}
\label{eq1}
\end{equation}
and for a strictly pseudoconcave manifold
\begin{equation}
\begin{split}
H^{p,q}(\bX_-)\simeq\cH^{p,q}_{\dbar}(X_-)=&\cH^{p,q}_{\cR_-}(X_-)\text{ for }q\neq
n-1,n
\text{ and }\\
[H^{n-p,0}(X_-)]^{\star}&=\cH_{\cR_-}^{p,n}(X_-).
\end{split}
\label{eq2}
\end{equation}
By duality we also have the isomorphisms
\begin{equation}
\begin{split}
H^{p,q}(\bX_+,\cI)\simeq\cH^{p,q}_{\dbar^*}(X_+)\text{ for
}q\neq n,\text{ and }\\
H^{p,q}(\bX_+,\cI)\simeq\cH^{p,q}_{\Id-\cR_-}(X_+)\text{ for
}q\neq n,n-1,
\end{split}
\label{eq3}
\end{equation}
and for a strictly pseudoconcave manifold
\begin{equation}
\begin{split}
H^{p,q}(\bX_-,\cI)\simeq\cH^{p,q}_{\dbar^*}(X_-)=&\cH^{p,q}_{\Id-\cR_+}(X_-)\text{ for
}q\neq 0,1\text{ and }
\\H^{p,0}(X_-)&=\cH_{\Id-\cR_+}^{p,0}(X_-).
\end{split} 
\label{eq4}
\end{equation}

We  recall the definitions of  various maps introduced in~\cite{AnHi}:
\begin{equation}
\begin{split}
&\alpha_q:H^{p,q}(X)\longrightarrow H^{p,q}(\bX_+)\oplus H^{p,q}(\bX_-)\\
&\beta_q:H^{p,q}(\bX_+)\oplus H^{p,q}(\bX_-) \longrightarrow H^{p,q}_b(Y)\\
&\gamma_q:H^{p,q}_b(Y)\longrightarrow H^{p,q+1}(X).
\end{split}
\end{equation}
The first two are  simple
\begin{equation}
\alpha_q(\sigma^{pq})\overset{d}{=}\sigma^{pq}\restrictedto_{\bX_+}\oplus
\sigma^{pq}\restrictedto_{\bX_-}\quad 
\beta_q(\sigma^{pq}_+,\sigma^{pq}_-)\overset{d}{=}[\sigma^{pq}_+-\sigma^{pq}_-]_b.
\end{equation}
To define $\gamma_q$ we recall the notion of \emph{distinguished representative}
defined in~\cite{AnHi}: If $\eta\in H^{p,q}_b(Y)$ then there is a $(p,q)$-form
$\xi$ defined on $X$ so that
\begin{enumerate}
\item $\xi_b$ represents $\eta$ in $H^{p,q}_b(Y).$
\item $\dbar\xi$ vanishes to infinite order along $Y.$
\end{enumerate}
The map $\gamma_q$ is defined in terms of a distinguished representative $\xi$
for $\eta$ by
\begin{equation}
\gamma_q(\eta)\overset{d}{=}\begin{cases}\dbar\xi &\text{ on }\bX_+\\
-\dbar\xi &\text{ on }\bX_-.
\end{cases}
\end{equation}
As $\dbar\xi$ vanishes to infinite order along $Y,$ this defines a smooth
form.

The map $\talpha_0:H^{p,0}(X)\to H^{p,0}(\bX_-)$ is defined by restriction. To
define $\tbeta_0: H^{p,0}(\bX_-)\to E^{p,1}_0(\bX_+),$ we extend $\xi\in
H^{p,0}(\bX_-)$ to a smooth form, $\txi$ on all of $X$ and set
\begin{equation}
\tbeta_0(\xi)=\dbar\txi\restrictedto_{\bX_+}.
\end{equation}
It is easy to see that $\tbeta_0(\xi)$ is a well defined element of the
quotient, $E^{p,1}_0(\bX_+).$ To define $\tgamma_0:E^{p,1}_0(\bX_+)\to
H^{p,1}(X)$ we observe that an element $[\xi]\in E^{p,1}_0(\bX_+)$ has a
representative, $\xi$ which vanishes on $bX_+.$ The class $\tgamma_0([\xi])$ is
defined by extending such a representative by zero to $X_-.$ As noted
in~\cite{AnHi}, one can in fact choose a representative so that $\xi$ vanishes
to infinite order along $bX_+.$

We can now state our modification to the Mayer-Vietoris sequence in Theorem 1
in~\cite{AnHi}.
\begin{theorem}\label{thm3}  
Let $X,X_+, X_-, Y$ be as above. Then the following sequence
 is exact
\begin{equation}
\begin{CD}
&&0 @>>>\\
 H^{p,0}(X) @>\talpha_0>> H^{p,0}(\bX_-) @>\tbeta_0>>  E^{p,1}_0(\bX_+) \\
  @>\tgamma_0>> H^{p,1}(X)@>\alpha_1>> H^{p,1}(\bX_+)\oplus H^{p,1}(\bX_-) \\
@>\beta_1>>   H^{p,1}_b(Y) @>\gamma_1>>\cdots\\
 @>\beta_{n-2}>>  H^{p,n-2}_b(Y) @>\gamma_{n-2}>>
H^{p,n-1}(X)\\@>r_+\oplus H^{p,n-1}_{\cR_-}>>
H^{p,n-1}(\bX_+)\oplus \cH^{p,n-1}_{\cR_-}(X_-)
@>>>\frac{H^{p,n-1}(X_+)}{K^{p,n-1}_+}\\
@>>> 0.
\end{CD}
\label{les1}
\end{equation} 
Here  $r_+$ denotes restriction to $X_+$ and 
\begin{equation}
K^{p,n-1}_+=\{\alpha\in H^{p,n-1}(\bX_+):\:
\int\limits_{Y}\xi\wedge\alpha_b=0\text{ for all }
\xi\in H^{n-p,0}(\bX_-)\}.
\end{equation}
The last nontrivial map in~\eqref{les1} is the canonical quotient by the
subspace $K^{p,n-1}_+\oplus \cH^{p,n-1}_{\cR_-}(X_-).$
\end{theorem}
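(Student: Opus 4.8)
The plan is to establish exactness of the sequence \eqref{les1} by modifying the proof of Theorem 1 in~\cite{AnHi}. The original Andreotti--Hill sequence is the Mayer--Vietoris-type sequence built from the maps $\alpha_q,\beta_q,\gamma_q$ together with the smooth cohomology groups $H^{p,q}(\bX_{\pm})$, $H^{p,q}(X)$, and $H^{p,q}_b(Y)$. Two modifications are needed: at the low-degree end we must splice in the ``regularized'' terms $E^{p,1}_0(\bX_+)$ with the maps $\talpha_0,\tbeta_0,\tgamma_0$, and at the high-degree end (degrees $n-1,n$) we must replace the naive smooth cohomology by the harmonic spaces $\cH^{p,n-1}_{\cR_-}(X_-)$ and the quotient $H^{p,n-1}(X_+)/K^{p,n-1}_+$. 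First I would verify exactness at each of the ``ordinary'' intermediate nodes (degrees $1$ through $n-2$) by quoting the Andreotti--Hill argument verbatim, since for $1\le q\le n-2$ the groups and maps coincide with theirs; here exactness of $\alpha_q,\beta_q,\gamma_q$ is unchanged and nothing new is required.

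The first genuine point of work is exactness at the three new terms $H^{p,0}(\bX_-)$, $E^{p,1}_0(\bX_+)$, and $H^{p,1}(X)$. For injectivity of $\talpha_0$ I would use that a holomorphic $(p,0)$-form on $X$ vanishing on $\bX_-$ must vanish identically by the identity principle, since $X$ is connected and $X_-$ is open. For exactness at $H^{p,0}(\bX_-)$, I would show $\Ker\tbeta_0=\Im\talpha_0$: given $\xi\in H^{p,0}(\bX_-)$ with $\tbeta_0(\xi)=0$ in $E^{p,1}_0(\bX_+)$, the definition of $E^{p,1}_0$ as the quotient in~\eqref{eq06.1} means $\dbar\txi$ on $\bX_+$ equals $\dbar\alpha$ for some $\alpha$ with $\alpha_b=0$; patching $\xi$ on $X_-$ with the holomorphic correction on $X_+$ produces a global holomorphic form restricting to $\xi$. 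Exactness at $E^{p,1}_0(\bX_+)$ and at $H^{p,1}(X)$ requires tracking the distinguished-representative construction for $\tgamma_0$: an element of $E^{p,1}_0$ has a representative vanishing to infinite order on $bX_+$ (as noted after the definition of $\tgamma_0$), so extension by zero is smooth, and one checks that $\alpha_1\circ\tgamma_0=0$ while $\Ker\alpha_1\subseteq\Im\tgamma_0$ using the isomorphisms in~\eqref{eq1}.

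The second genuine point of work, and the main obstacle, is exactness at the high-degree terminal segment, where the groups $\cH^{p,n-1}_{\cR_-}(X_-)$ and $H^{p,n-1}(X_+)/K^{p,n-1}_+$ appear and the sequence terminates. The subtlety is that $\cH^{p,n-1}_{\cR_-}(X_-)$ was \emph{not} identified in Section~\ref{s.nullsp} (it was explicitly deferred), so I would first need to pin down what this space is in terms of the boundary data. I would use the Hodge decomposition from Theorem~\ref{thm2}, the duality statements at the end of Section~\ref{s.nullsp}, and the pairing $\int_Y\xi\wedge\alpha_b$ defining $K^{p,n-1}_+$: concretely, the cokernel of $r_+\oplus H^{p,n-1}_{\cR_-}$ should be computed by Serre-type duality on $X_-$ pairing $H^{p,n-1}$ against $H^{n-p,0}(\bX_-)$, which is exactly what the condition defining $K^{p,n-1}_+$ encodes. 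Surjectivity onto the quotient $H^{p,n-1}(X_+)/K^{p,n-1}_+$ and exactness at $H^{p,n-2}_b(Y)$ (via $\gamma_{n-2}$) then follow by showing the image of $\beta_{n-2}$ is precisely the kernel of $\gamma_{n-2}$ and that the terminal quotient map annihilates exactly $K^{p,n-1}_+\oplus\cH^{p,n-1}_{\cR_-}(X_-)$.

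The hard part will be the bookkeeping at this terminal segment, because three phenomena collide: the failure of $\cH^{p,n-1}_{\cR_-}(X_-)$ to have a clean cohomological description, the appearance of the pairing against $H^{n-p,0}(\bX_-)$ in the definition of $K^{p,n-1}_+$, and the fact that the sequence must terminate (so the last map is genuinely surjective rather than feeding into a degree-$n$ term). I expect that the cleanest route is to dualize the whole problem via the Hodge star isomorphisms from Section~\ref{s.dualbc}, converting the pseudoconcave degree-$(n-1)$ statement into a pseudoconvex degree-$1$ statement where Lemma~\ref{lem17} and the space $E^{p,1}_0$ are already understood, and then transport exactness back. Once the terminal exactness is in hand, the remaining nodes follow from Andreotti--Hill together with the harmonic identifications~\eqref{eq1}--\eqref{eq4}, and assembling these pieces completes the proof.
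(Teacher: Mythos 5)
Your treatment of the low-degree end and the middle of the sequence matches the paper's proof: injectivity of $\talpha_0$ from holomorphy, the patching argument at $H^{p,0}(\bX_-)$ (if $\tbeta_0(\xi)=0$ then $\dbar\txi_+=\dbar\theta$ with $\theta_b=0$, and $\txi_+-\theta$ extends $\xi$ holomorphically), the direct arguments at $E^{p,1}_0$ and $H^{p,1}(X)$, and quoting Andreotti--Hill for exactness through $H^{p,n-2}_b(Y)$. The genuine gap is at the terminal segment, where your strategy would fail for two reasons. First, your plan to begin by ``pinning down'' $\cH^{p,n-1}_{\cR_-}(X_-)$ in terms of boundary data is backwards: the paper deliberately never identifies this space (Section~\ref{s.nullsp} states it has no simple description, and it is the exact sequence itself that ultimately describes it). The proof of exactness at $H^{p,n-1}(X)$ needs only Theorem~\ref{thm2}: write $\beta_+=\dbar\gamma_+$, extend $\gamma_+$ smoothly to $\gamma_-$ on $X_-$, note that $\beta_--\dbar\gamma_-$ vanishes along $Y$ and hence lies in the relevant domain, and apply the Hodge decomposition to get $\beta_--\dbar\gamma_-=\dbar\dbar^*G^{p,n-1}_{\cR_-}(\beta_--\dbar\gamma_-)$; Andreotti--Hill then give $\beta\in\Im\gamma_{n-2}$. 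Second, the Hodge-star reduction you propose does not do what you claim: $\star$ carries $\cH^{p,n-1}_{\cR_-}(X_-)$ to $\cH^{n-p,1}_{\Id-\cR_+}(X_-)$, a degree-one problem with the \emph{dual} boundary condition on the \emph{same pseudoconcave} manifold $X_-$. It does not produce a problem on a pseudoconvex manifold, so Lemma~\ref{lem17} (which concerns $\cR_+$ on $X_+$) is unavailable and there is no known exactness statement to ``transport back.''

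What is actually required at the last two nodes, and is absent from your outline, is a pair of concrete constructions. To show every $\alpha\in\cH^{p,n-1}_{\cR_-}(X_-)$ lies in the image of $r_+\oplus H^{p,n-1}_{\cR_-}$: the boundary condition $\bcS_p\alpha_b=0$ means $\alpha_b$ is orthogonal to the nullspace of $\dbarb^*$, so $\alpha_b=\dbarb\beta$; one extends $\beta$ to $\beta_+$ on $X_+$ with $\dbnc\dbar\beta_+=0$ and glues $\alpha$ on $X_-$ to $\dbar\beta_+$ on $X_+$, producing a global $\dbar$-closed class whose restriction to $\bX_+$ is exact and whose $\cR_-$-harmonic projection is $\alpha$. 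To identify the image of $r_+$ with $K^{p,n-1}_+$: the inclusion $\Im r_+\subset K^{p,n-1}_+$ is Stokes' theorem, $0=\int_{X_-}\dbar(\alpha_-\wedge\xi)=\int_Y\alpha_{+b}\wedge\xi$, while the reverse inclusion --- that the moment conditions defining $K^{p,n-1}_+$ suffice to produce a $\dbar$-closed extension of $\alpha_+$ across $X_-$ --- is precisely Theorem 5.3.1 of~\cite{FollandKohn1}. Your appeal to ``Serre-type duality'' correctly explains why the pairing against $H^{n-p,0}(\bX_-)$ appears, but it is this extension theorem, not duality, that carries the cokernel computation; without it the surjectivity onto $H^{p,n-1}(\bX_+)/K^{p,n-1}_+$ and the exactness at the preceding node are unproved.
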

\begin{remark} Note that if $p=0,$ then $E^{0,1}_0=0.$ This follows 
  from~\eqref{eq06.1} and the fact that, on a strictly pseudoconvex manifold,
  all CR-functions on the boundary extend as holomorphic functions.  The proof
  given below works for all $n\geq 2.$ If $n=2,$ then one skips in~\eqref{les1}
  from $H^{p,1}(X)$ to $H^{p,1}(\bX_+)\oplus\cH^{p,1}_{\cR_-}(X_-).$
\end{remark}
\begin{proof}   It is clear that $\talpha_0$ is injective as $H^{p,0}(X)$ consists of
holomorphic forms. We now establish exactness at $H^{p,0}(\bX_-).$ That
$\Im\talpha_0\subset\Ker\tbeta_0$ is clear. Now suppose that on $\bX_+$ we have
$\tbeta_0(\xi)=0,$ this means that
\begin{equation}
\dbar\txi\restrictedto_{\bX_+}=\dbar\theta\text{ where }\theta_b=0.
\label{eq06.2}
\end{equation}
This implies that $\txi_+-\theta$ defines a holomorphic extension of $\xi$ to
all of $X$ and therefore $\xi\in\Im\talpha_0.$ That
$\Im\tbeta_0\subset\Ker\tgamma_0$ is again clear. Suppose on the other hand
that $\tgamma_0(\xi)=0.$ This means that there is a $(p,0)$-form, $\beta,$
defined on all of $X$ so that $\dbar\beta=\xi$ on $X_+$ and $\dbar\beta=0$ on
$X_-.$ This shows that $\xi=\tbeta_0(\beta_-).$ 

It is once again clear that $\Im\tgamma_0\subset\Ker\alpha_1.$ If
$\alpha_1(\xi)=0,$ then there are forms $\beta_{\pm}$ so that
\begin{equation}
\dbar\beta_{\pm}=\xi_{\pm}
\end{equation}
Let $\tbeta$ be a smooth extension of $\beta_-$ to all of $X.$ The form
$\xi-\dbar\tbeta$ represents the same class in $H^{p,1}(X)$ as $\xi.$ Since
\begin{equation}
(\xi-\dbar\tbeta)\restrictedto_{X_-}=0\text{ and }
(\xi-\dbar\tbeta)\restrictedto_{X_+}=\dbar(\beta_+-\tbeta_-),
\end{equation}
we see that $\xi\in\Im\tgamma_0.$

Exactness through $H^{p,n-2}_b(Y)$ is proved in~\cite{AnHi}. We now show
exactness at $H^{p,n-1}(X).$  The $\dbar$-Neumann condition, satisfied by
elements of $\cH^{p,n-1}_{\cR_{-}}(X_{-}),$ implies that
$H^{p,n-1}_{\cR_{-}}(\dbar\alpha_{-})=0,$ that $r_+(\dbar\alpha_+)=0$ is
obvious. Hence 
$$\Im\gamma_{n-2}\subset\left[\Ker r_+\oplus H^{p,n-1}_{\cR_-}\right].$$ 
Now suppose that $\beta\in H^{p,n-1}(X)$ satisfies
$H^{p,n-1}_{\cR_{-}}\beta_{-}=0,\, r_+(\beta_+)=0.$ The second condition
implies that
\begin{equation}
\beta_+=\dbar\gamma_+.
\end{equation}
Let $\gamma_-$ denote a smooth extension of $\gamma_+$ to $X_-.$ Then
$\beta_--\dbar\gamma_-$ vanishes along $Y$ and therefore Theorem~\ref{thm2} gives
\begin{equation}
\beta_--\dbar\gamma_-=\dbar\dbar^*G^{p,n-1}_{\cR_-}(\beta_--\dbar\gamma_-)
=\dbar \chi_-.
\end{equation}
Putting these equations together, we have shown that
\begin{equation}
\beta_+=\dbar\gamma_+,\quad \beta_-=\dbar(\gamma_-+\chi_-).
\end{equation}
Andreotti and Hill show that this implies that $\beta\in\Im\gamma_{n-2},$ thus
establishing exactness at $H^{p,n-1}(X).$

To show exactness at $H^{p,n-1}(X_+)\oplus \cH^{p,n-1}_{\cR_-}(X_-)$
we need to show that 
\begin{equation}
\Im\left[r_+\oplus H^{p,n-1}_{\cR_-}\right]=K^{p,n-1}_+\oplus
\cH^{p,n-1}_{\cR_-}(X_-).
\label{eq5}
\end{equation}
Let $\alpha\in\cH^{p,n-1}_{\cR_-}(X_-),$ then $\dbar\alpha=\dbar^*\alpha=0$ and
$(\dbnc\alpha)_b=\bcS_p\alpha_b=0.$ The last condition implies that
$$\alpha_b=\dbarb\beta.$$
We can extend $\beta$ to $\beta_+$ on $X_+$ so that $\dbnc\dbar\beta_+=0.$
Defining
\begin{equation}
\talpha=\begin{cases} \alpha&\text{ on }X_-\\
\dbar\beta_+&\text{ on }X_+,
\end{cases}
\end{equation}
gives a $\dbar$-closed form that defines a class in $H^{p,n-1}(X).$ It is clear
that
$$ r_+(\talpha_+)=0\text{ and
}H^{p,n-1}_{\cR_-}(\talpha_-)=\alpha.$$
To finish the argument we only need to describe
$I^{p,n-1}_+=\{ r_+(\theta):\: \theta\in H^{p,n-1}(X)\}.$  If
$\alpha_+$   belongs to  $I^{p,n-1}_+,$ then evidently $\alpha_+$ has a closed
extension to $X_-,$ call it $\alpha_-.$ If $\xi\in H^{n-p,0}(X_-),$ then
\begin{equation}
0=\int\limits_{X_-}\dbar(\alpha_-\wedge\xi)=\int_{Y}\alpha_{+b}\wedge\xi.
\end{equation}
Hence $I^{p,n-1}_+\subset K^{p,n-1}_+.$ If $\alpha_+\in K^{p,n-1}_+,$ then
$\alpha_+$ has a closed extension to $X_-.$ This follows from Theorem 5.3.1
in~\cite{FollandKohn1} and establishes~\eqref{eq5}. 
\end{proof}

We now identify $\cH^{p,n}_{\cR_-}(X_-).$
\begin{proposition}\label{prop3}  With $X, X_+, X_-$ as above, we have the isomorphism
\begin{equation}
\cH_{\cR_-}^{p,n}(X_-)\simeq \cH^{p,n}(X)\oplus\frac{ H^{p,n-1}(\bX_+)}{K^{p,n-1}_+}.
\label{eq55}
\end{equation}
\end{proposition}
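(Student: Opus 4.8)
The plan is to identify $\cH^{p,n}_{\cR_-}(X_-)$ by using the duality already established in the excerpt together with the top piece of the long exact sequence in Theorem~\ref{thm3}. Recall that the preceding lemma gives $\cH^{pn}_{\cR_-}(X_-)\simeq [H^{n-p,0}(X_-)]^{\star}$, so the target group is dual to the space of global holomorphic $(n-p,0)$-forms on $X_-$. The strategy is therefore to read off $[H^{n-p,0}(X_-)]^{\star}$ from the first few maps of the exact sequence~\eqref{les1} applied with $p$ replaced by $n-p$, and then match the resulting dimension count against the right-hand side of~\eqref{eq55}. Concretely, I expect to use the exactness at $H^{n-p,0}(\bX_-)$: the map $\talpha_0:H^{n-p,0}(X)\to H^{n-p,0}(\bX_-)$ is injective, its image is $\Ker\tbeta_0$, and $\tbeta_0$ maps into $E^{n-p,1}_0(\bX_+)$. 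Thus $H^{n-p,0}(\bX_-)$ fits into a short exact sequence with $H^{n-p,0}(X)$ and $\Im\tbeta_0\subset E^{n-p,1}_0(\bX_+)$, and dualizing will produce the two summands on the right of~\eqref{eq55}.

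First I would make precise the duality pairings. The isomorphism $\cH^{pn}_{\cR_-}(X_-)\simeq[H^{n-p,0}(X_-)]^{\star}$ comes from the previous lemma via the Hodge star, and I would want to record that the Serre-type pairing $\int_Y \xi\wedge\alpha_b$ appearing in the definition of $K^{p,n-1}_+$ is exactly the pairing that realizes this duality; this is the same integral already used at the end of the proof of Theorem~\ref{thm3} (the computation $0=\int_{X_-}\dbar(\alpha_-\wedge\xi)=\int_Y\alpha_{+b}\wedge\xi$). So I would interpret $K^{p,n-1}_+$ as the annihilator, under this pairing, of the restriction-to-boundary image of $H^{n-p,0}(\bX_-)$ inside $H^{p,n-1}(\bX_+)$. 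Consequently the quotient $H^{p,n-1}(\bX_+)/K^{p,n-1}_+$ is naturally dual to the image of the boundary-restriction map on $H^{n-p,0}(\bX_-)$, which by~\eqref{eq06.1} is precisely the part of $H^{n-p,0}(\bX_-)$ that does \emph{not} extend holomorphically across $Y$, i.e. a complement to $H^{n-p,0}(X)$ inside $H^{n-p,0}(\bX_-)$.

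Putting these together, I would argue: by exactness of~\eqref{les1} (with $p\rightsquigarrow n-p$) at $H^{n-p,0}(\bX_-)$ there is a short exact sequence
\begin{equation}
0\to H^{n-p,0}(X)\to H^{n-p,0}(\bX_-)\to \Im\tbeta_0\to 0,
\end{equation}
and the image $\Im\tbeta_0$ is identified, via the pairing, with the dual of $H^{p,n-1}(\bX_+)/K^{p,n-1}_+$. Dualizing this short exact sequence and using $\cH^{p,n}_{\cR_-}(X_-)\simeq[H^{n-p,0}(X_-)]^{\star}$ together with $\cH^{p,n}(X)\simeq[H^{n-p,0}(X)]^{\star}$ (Serre duality on the closed manifold $X$, or equivalently the Hodge-star identification) yields
\begin{equation}
\cH^{p,n}_{\cR_-}(X_-)\simeq \cH^{p,n}(X)\oplus\frac{H^{p,n-1}(\bX_+)}{K^{p,n-1}_+},
\end{equation}
which is~\eqref{eq55}.

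The main obstacle I anticipate is the careful verification that the nondegenerate pairing $\int_Y\xi\wedge\alpha_b$ descends correctly to the quotients on both sides and genuinely identifies $\Im\tbeta_0$ with $\left[H^{p,n-1}(\bX_+)/K^{p,n-1}_+\right]^{\star}$, rather than merely providing an inequality of dimensions. This requires knowing that $\tbeta_0$ has image exactly equal to the annihilator dual space, which in turn rests on the surjectivity/extension statement from Theorem 5.3.1 in~\cite{FollandKohn1} used at the close of Theorem~\ref{thm3} (that every $\alpha_+\in K^{p,n-1}_+$ admits a closed extension to $X_-$). I would therefore spend the bulk of the argument confirming that these two identifications are mutually dual under a single pairing, so that the abstract dualization of the short exact sequence is legitimate; the remaining steps are then formal consequences of exactness and of the duality isomorphisms~\eqref{duality1} and~\eqref{eq06.1} already recorded above.
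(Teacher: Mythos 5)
Your overall strategy is in substance the same as the paper's: identify $\cH^{p,n}_{\cR_-}(X_-)$ with the holomorphic $(n-p,0)$-forms on $X_-$ via the Hodge star (note the paper's $[H^{n-p,0}(X_-)]^{\star}$ is the Hodge-star image, not the linear dual, but since $\star$ is an isomorphism this is immaterial for the dimension count), recognize $\cH^{p,n}(X)$ as the subspace of forms that extend holomorphically across $Y$, and identify the quotient with $H^{p,n-1}(\bX_+)/K^{p,n-1}_+$ through the pairing $\int_Y\xi\wedge\alpha_b$. The paper does this directly, stating that the range of the inclusion $\cH^{p,n}(X)\hookrightarrow\cH^{p,n}_{\cR_-}(X_-)$ consists of the extendable forms and that the obstruction to extension is precisely $H^{p,n-1}(\bX_+)/K^{p,n-1}_+$; your detour through the short exact sequence $0\to H^{n-p,0}(X)\to H^{n-p,0}(\bX_-)\to\Im\tbeta_0\to 0$ and its dualization is a formal repackaging of the same argument, and the passage from a perfect pairing to the direct sum decomposition is just finite-dimensional linear algebra.

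The gap sits in the one step carrying all the analytic content, and your proposed tool for it is the wrong one. For the pairing $(\xi,\alpha)\mapsto\int_Y\xi\wedge\alpha_b$ on $H^{n-p,0}(\bX_-)\times H^{p,n-1}(\bX_+)$, the right kernel equals $K^{p,n-1}_+$ by definition, and Stokes' theorem shows the left kernel \emph{contains} $H^{n-p,0}(X)$; the crux is the reverse inclusion: a holomorphic $(n-p,0)$-form on $X_-$ whose moments against every class in $H^{p,n-1}(\bX_+)$ vanish must extend holomorphically across $Y$ into $X_+$. The statement you cite --- that every $\alpha_+\in K^{p,n-1}_+$ admits a $\dbar$-closed extension to $X_-$ --- is the transposed assertion, about extending $(p,n-1)$-classes from the pseudoconvex side into the pseudoconcave side. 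It identifies $K^{p,n-1}_+$ with the image of the restriction map $r_+$, but it says nothing about the left kernel, so by itself it yields only the one-sided dimension inequality $\dim H^{p,n-1}(\bX_+)/K^{p,n-1}_+\le\dim\Im\tbeta_0$ that you yourself flagged as the danger. What you actually need is the other instance of Theorem 5.3.1 of \cite{FollandKohn1}: the moment-condition criterion for extending a CR $(n-p,0)$-form on the strictly pseudoconvex boundary $Y$ holomorphically into $X_+$ --- this is exactly what the paper means by ``Again applying Theorem 5.3.1.'' Once that instance is invoked in place of the one you quote, your dualization argument closes and agrees with the paper's proof.
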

\begin{remark}
If $X_+$ is a Stein manifold then the groups $H^{p,q}(X_+)$ vanish for $q>0,$
as do the groups $H^{p,q}_b(Y)$ for $1<q<n-1.$ This proposition and
Theorem~\ref{thm3}, then imply that
\begin{equation}
H^{p,q}(X)\simeq\cH^{p,q}_{\cR_-}(X_-)
\end{equation}
for all $0\leq p,q\leq n.$
\end{remark}
\begin{proof} The group $\cH^{p,n}_{\cR_-}(X_-)$ 
consists of $(p,n)$-forms $\alpha_-$ on $X_-$ that satisfy:
\begin{equation}
\dbar^*\alpha_-=0\text{ and }\bcS_p(\dbnc\alpha_-)_b=(\dbnc\alpha_-)_b.
\end{equation}
It is a simple matter to show that the first condition implies the
second. Hence if $\beta_-\in H^{n-p,0}(X_-),$ then $\dbar^*{}^{\star}\beta_-=0$
and therefore ${}^{\star}\beta_-\in \cH_{\cR_-}^{p,n}(X_-).$  From this we conclude that
the inclusion of $\cH^{p,n}(X)$  into $\cH^{p,n}_{\cR_-}(X_-)$ is
injective. The range consists of exactly those forms $\alpha_-$ such that
${}^{\star}\alpha_-$ has a holomorphic extension to  $X_+.$  Again applying Theorem
5.3.1 of~\cite{FollandKohn1}, we see that the obstruction to having such an
extension is precisely $\frac{ H^{p,n-1}(X_+)}{K^{p,n-1}_+},$ thus proving the
proposition. 
\end{proof}

Putting together this proposition with Theorem~\ref{thm3} and the results of
Section~\ref{s.nullsp} gives our first gluing formula.
\begin{corollary}\label{cor1} Suppose that $X, X_+, X_-$ are as above, then,
  for $0\leq p\leq n,$ we have the
  following identities
\begin{equation}
\begin{split}
\chi^p_{\cO}(X)=\sum_{q=0}^{n}\dim & H^{p,q}(X)(-1)^q=\\
&\sum_{q=0}^{n}[\dim \cH^{p,q}_{\cR_+}(X_+)+
\dim \cH^{p,q}_{\cR_-}(X_-)](-1)^q-\sum_{q=1}^{n-2}(-1)^q\dim H^{p,q}_b(Y).
\end{split}
\label{eq77}
\end{equation}
The last term is absent if $\dim X=2.$
\end{corollary}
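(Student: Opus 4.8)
The plan is to deduce the corollary from the exactness of the sequence in Theorem~\ref{thm3}: the alternating sum of the dimensions of the terms of a finite exact sequence vanishes, and I will convert that relation into~\eqref{eq77} using the dictionary between the Dolbeault groups of $\bX_{\pm}$ and the harmonic spaces $\cH^{p,q}_{\cR_{\pm}}$ assembled in Section~\ref{s.nullsp} together with Proposition~\ref{prop3}.

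First I would record the Euler-characteristic relation for~\eqref{les1}. Assigning the sign $(-1)^i$ to the $i$-th nonzero term, starting with $H^{p,0}(X)$ at $i=0$, the three groups at level $q$ — namely $H^{p,q}(X)$, $H^{p,q}(\bX_+)\oplus H^{p,q}(\bX_-)$, and $H^{p,q}_b(Y)$ — occupy positions $3q,3q+1,3q+2$ and hence carry signs $(-1)^q,(-1)^{q+1},(-1)^q$. The anomalous initial terms $H^{p,0}(\bX_-)$ and $E^{p,1}_0(\bX_+)$ carry signs $-$ and $+$, while the terminal terms $H^{p,n-1}(\bX_+)\oplus\cH^{p,n-1}_{\cR_-}(X_-)$ and $H^{p,n-1}(X_+)/K^{p,n-1}_+$ fall into the same pattern with signs $(-1)^n$ and $(-1)^{n-1}$. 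Exactness then yields a single scalar relation, which I denote $S=0$, among all these dimensions.

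Next I would substitute the harmonic-space identifications. On the pseudoconvex side, \eqref{04.19} and Lemma~\ref{lem17} give $\cH^{p,0}_{\cR_+}(X_+)=0$, $\dim\cH^{p,1}_{\cR_+}(X_+)=\dim\cH^{p,1}_{\dbar}(X_+)+\dim E^{p,1}_0$, and $\cH^{p,q}_{\cR_+}(X_+)=\cH^{p,q}_{\dbar}(X_+)$ for $q\ge 2$; using $\cH^{p,q}_{\dbar}(X_+)\simeq H^{p,q}(\bX_+)$ from~\eqref{eq1} this rewrites $\sum_{q}(-1)^q\dim\cH^{p,q}_{\cR_+}(X_+)$ as $-\dim E^{p,1}_0+\sum_{q=1}^{n}(-1)^q\dim H^{p,q}(\bX_+)$. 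On the pseudoconcave side, \eqref{eq2} gives $\cH^{p,q}_{\cR_-}(X_-)\simeq H^{p,q}(\bX_-)$ for $q<n-1$, the term $\cH^{p,n-1}_{\cR_-}(X_-)$ is carried along untouched exactly as it appears in~\eqref{les1}, and Proposition~\ref{prop3} supplies $\dim\cH^{p,n}_{\cR_-}(X_-)=\dim H^{p,n}(X)+\dim\bigl(H^{p,n-1}(\bX_+)/K^{p,n-1}_+\bigr)$. The essential point of the bookkeeping is that the degree $q=n$ contribution $(-1)^n\dim H^{p,n}(X)$ to $\chi^p_{\cO}(X)$ never occurs in the Mayer--Vietoris sequence and enters the formula solely through Proposition~\ref{prop3}.

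Finally I would check that the right-hand side of~\eqref{eq77} minus $\chi^p_{\cO}(X)$ equals $-S$, and hence vanishes. Here one needs the vanishing $H^{p,n}(\bX_+)\simeq\cH^{p,n}_{\dbar}(X_+)\simeq[\cH^{n-p,0}_{\dbar^*}(X_+)]^*=0$, valid by the duality~\eqref{duality1} and the fact that a holomorphic $(n-p,0)$-form whose restriction to the connected boundary $bX_+$ vanishes is identically zero; this removes the spurious $q=n$ term on the pseudoconvex side so that the $H^{p,q}(\bX_+)$ pieces match. The main obstacle is precisely this end-matching: one must verify that the non-standard terms $E^{p,1}_0$, $H^{p,0}(\bX_-)$, $\cH^{p,n-1}_{\cR_-}(X_-)$, and the quotient $H^{p,n-1}(\bX_+)/K^{p,n-1}_+$ enter with exactly the signs forced by their positions in~\eqref{les1} and cancel against the corresponding pieces of the two harmonic-space sums, with Proposition~\ref{prop3} accounting for the single degree, $q=n$, that the sequence never sees. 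When $n=2$ the sum $\sum_{q=1}^{n-2}(-1)^q\dim H^{p,q}_b(Y)$ is empty and the sequence shortens accordingly, so the same computation applies verbatim.
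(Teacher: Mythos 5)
Your proposal is correct and takes essentially the same route as the paper's own proof: the alternating sum of dimensions over the exact sequence of Theorem~\ref{thm3}, combined with Proposition~\ref{prop3} to supply the degree-$n$ term $\dim\cH^{p,n}_{\cR_-}(X_-)=\dim H^{p,n}(X)+\dim\bigl(H^{p,n-1}(\bX_+)/K^{p,n-1}_+\bigr)$, and the identifications $\cH^{p,0}_{\cR_+}(X_+)=0$, $\cH^{p,1}_{\cR_+}(X_+)\simeq H^{p,1}(\bX_+)\oplus E^{p,1}_0$ (Lemma~\ref{lem17}), and~\eqref{eq1}--\eqref{eq2}. Your explicit check that $\cH^{p,n}_{\cR_+}(X_+)\simeq H^{p,n}(\bX_+)=0$ via~\eqref{duality1} is a step the paper leaves implicit but which is indeed needed for the bookkeeping to close, and your argument for it is sound.
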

\begin{proof}
The identity in~\eqref{eq77} follows from the fact that the alternating sum of
the dimensions in a long exact sequence is zero along with the consequence of
Proposition~\ref{prop3}:
\begin{equation}
\dim \cH_{\cR_-}^{p,n}(X_-)=\dim H^{p,n}(X)+\dim
\cH_{\cR_+}^{p,n-1}(\bX_+)-\dim K^{p,n-1}_+. 
\end{equation}
We also use that
\begin{equation}
\begin{split}
H^{0,0}(X)\simeq \cH^{0,0}_{\cR_-}(X_-)&\text{ and }
\cH^{p,0}_{\cR_+}(X_+)=0\text{ for all }p\geq 0\\
\cH^{p,1}_{\cR_+}(X_+)\simeq &\cH^{p,1}_{\dbar}(X_+)\oplus E^{p,1}_+
\simeq H^{p,1}(\bX_+)\oplus E^{p,1}_+.
\end{split}
\end{equation}
\end{proof}

We modify a second exact sequence in~\cite{AnHi} in order to obtain an
expression for $\chi_{\cO}^p(X)$ in terms of $\cH^{p,q}_{\cR_+}(X_+)$ and
$\cH^{p,q}_{\Id-\cR_+}(X_-).$ This formula is a subelliptic analogue of
Bojarski's formula expressing the index of a Dirac operator on a partitioned
manifold in terms of the indices of boundary value problems  on the
pieces. First we state the modification of the exact sequence from Proposition 4.3
in~\cite{AnHi}.
\begin{theorem} Let $X,X_+, X_-, Y$ be as above. Then the following sequence
 is exact
\begin{equation}
\begin{CD}
0 @>>> \cH^{p,1}_{\Id-\cR_+}(X_-) @>\talpha_1>>  H^{p,1}(\bX_-) \\
@>\beta_1>> H^{p,1}_b(Y)@>\gamma_1>> H^{p,2}(X_-,\cI)@>\alpha_2>> H^{p,2}(\bX_-) \\
@>\beta_2>>\cdots & \cdots@>\alpha_{n-2}>> H^{p,n-2}(\bX_-)\\
 @>\beta_{n-2}>>  H^{p,n-2}_b(Y) @>\gamma_{n-2}>>
H^{p,n-1}(X_-,\cI)\\@>H^{p,n-1}_{\cR_-}>>
\cH^{p,n-1}_{\cR_-}(X_-)
@>>> 0.
\end{CD}
\label{les2}
\end{equation} 
The map $\gamma_q$ is defined here by following the map $\gamma_q,$ defined above,
by restriction to $X_-.$
\end{theorem}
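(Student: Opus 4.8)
The plan is to follow the template established by Andreotti and Hill for the original exact sequence in~\cite{AnHi}, modifying only those places where a smooth cohomology group $H^{p,q}(\bX_-,\cI)$ has been replaced by a harmonic space $\cH^{p,q}_{\cR_-}(X_-)$ or $\cH^{p,q}_{\Id-\cR_+}(X_-)$, and where a boundary Szeg\H o projector enters. The interior segment, exactness from $H^{p,1}(\bX_-)$ through $H^{p,n-2}_b(Y)$, is verbatim Proposition 4.3 of~\cite{AnHi}, so I would cite that directly and concentrate on the two modified ends of the sequence.

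First I would treat the left end. The group $\cH^{p,1}_{\Id-\cR_+}(X_-)$ consists of $(p,1)$-forms $\alpha$ on $X_-$ with $\dbar\alpha=\dbar^*\alpha=0$ satisfying the dual boundary condition from~\eqref{04.71} in degree $1$, and by the duality isomorphism~\eqref{eq4} it is identified with $H^{p,1}(\bX_-,\cI)$. The map $\talpha_1$ is the inclusion of this $\cI$-cohomology group into $H^{p,1}(\bX_-)$ induced by the natural map of complexes. I would verify injectivity by noting that a harmonic representative in the kernel is $\dbar$-exact through a form in $\cI$, hence (via the Hodge decomposition of Theorem~\ref{thm2}, using that harmonic forms are orthogonal to the range of $\dbar$) is already zero; this replaces the algebraic argument at the start of the Andreotti-Hill sequence with its Hodge-theoretic counterpart. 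Exactness at $H^{p,1}(\bX_-)$, that $\Ker\beta_1=\Im\talpha_1$, follows from~\cite{AnHi} once one identifies $H^{p,1}(\bX_-,\cI)\simeq\cH^{p,1}_{\Id-\cR_+}(X_-)$ via~\eqref{eq4}.

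Next I would treat the right end, the surjection $H^{p,n-1}_{\cR_-}:H^{p,n-1}(\bX_-,\cI)\to\cH^{p,n-1}_{\cR_-}(X_-)$, which is the place the sequence genuinely diverges from the classical one and where I expect the main obstacle. The map is harmonic projection onto $\cH^{p,n-1}_{\cR_-}(X_-)$ restricted to the $\dbar$-cohomology of $(\bX_-,\cI)$; I must show it is well defined, that $\gamma_{n-2}$ followed by it vanishes, and above all that it is surjective. For surjectivity I would take $\alpha\in\cH^{p,n-1}_{\cR_-}(X_-)$ and, using $\bcS_p\alpha_b=0$ exactly as in the proof of Theorem~\ref{thm3} at equation~\eqref{eq5}, write $\alpha_b=\dbarb\beta$ and exhibit a representative of an $\cI$-class mapping to $\alpha$ under harmonic projection. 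The subtlety is that one works in $\cI$-cohomology (the dual $\dbar$-Neumann side) while the boundary condition defining $\cH^{p,n-1}_{\cR_-}(X_-)$ involves the conjugate Szeg\H o projector $\bcS_p$; reconciling these requires the star-identities~\eqref{strrel} and~\eqref{04.5} together with Lemma~\ref{lem10} to guarantee the constructed representative genuinely lies in $\Dom(\dbar^{p,n-1}_{\cR_-})$. Finally, exactness at $H^{p,n-1}(X_-,\cI)$, namely $\Ker(H^{p,n-1}_{\cR_-})=\Im\gamma_{n-2}$, I would obtain by combining the vanishing of harmonic projection on exact forms with the Andreotti-Hill description of $\Im\gamma_{n-2}$, invoking Theorem~\ref{thm2} to realize any $\cI$-class in the kernel as the image of a boundary Kohn-Rossi class.
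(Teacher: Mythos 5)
Your plan handles the middle of the sequence and the right end reasonably, but it has a genuine gap at the left end, which is precisely where the theorem differs from Andreotti--Hill. You propose to identify $\cH^{p,1}_{\Id-\cR_+}(X_-)$ with $H^{p,1}(\bX_-,\cI)$ ``via~\eqref{eq4}'' and then to quote~\cite{AnHi} for exactness at $H^{p,1}(\bX_-)$. But~\eqref{eq4} explicitly excludes the degrees $q=0,1$, and the identification is not available: on a strictly pseudoconcave manifold, $q=1$ is exactly the degree in which the dual $\dbar$-Neumann condition fails to be subelliptic (the Hodge star carries $\cH^{p,1}_{\dbar^*}(X_-)$ to the \emph{infinite-dimensional} space $\cH^{n-p,n-1}_{\dbar}(X_-)$), whereas $\cH^{p,1}_{\Id-\cR_+}(X_-)$ is finite dimensional because of the extra Szeg\H o condition $\cS_p(\dbnc\alpha)_b=0$; the remark following the theorem states this non-isomorphism explicitly in the $n=2$ case. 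If the identification you invoke were valid, the theorem would just be Proposition 4.3 of~\cite{AnHi}; the whole content of the modification is that the degree-one group is replaced by something genuinely different. So neither the injectivity of $\talpha_1$ nor exactness at $H^{p,1}(\bX_-)$ can be outsourced to~\cite{AnHi}. For the latter, the paper argues directly: given $\xi$ with $\beta_1[\xi]=0$, write $\xi_b=\dbarb\psi$, extend $\psi$ to $\Psi_0$, note that $\xi-\dbar\Psi_0$ has vanishing tangential part and hence lies in the domain of $\dbar^{p,1}_{\Id-\cR_+}$, and apply the Hodge decomposition for $\Id-\cR_+$ to produce a harmonic representative of $[\xi]$.

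The second, related, defect is your justification of injectivity. ``Harmonic forms are orthogonal to the range of $\dbar$'' is not a valid principle on a manifold with boundary: the kernel of $\talpha_1$ consists of forms $\alpha=\dbar\beta$ with $\beta$ an \emph{arbitrary} smooth $(p,0)$-form, not one in the domain of $\dbar_{\Id-\cR_+}$, nor one in $\cI$ (your phrase ``exact through a form in $\cI$'' would mean the class is already zero, which is what must be proved). Integration by parts leaves a boundary term,
\begin{equation*}
\langle\alpha,\alpha\rangle_{X_-}=\langle\dbar\beta,\alpha\rangle_{X_-}
=\langle\beta,\dbar^*\alpha\rangle_{X_-}+\langle\beta_b,(\dbnc\alpha)_b\rangle_{Y}
=\langle\beta_b,(\dbnc\alpha)_b\rangle_{Y},
\end{equation*}
and the entire issue is why this pairing vanishes. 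The paper's key step is exactly here: $\alpha_b=0$ forces $\dbarb\beta_b=0$, hence $\beta_b=\cS_p\beta_b$, while the boundary condition gives $\cS_p(\dbnc\alpha)_b=0$; orthogonality of the projector $\cS_p$ then kills the pairing and yields $\alpha=0$. Without this Szeg\H o-projector argument the injectivity claim is unsupported. By contrast, your right-end plan is viable and actually diverges from the paper: you propose a direct construction of a preimage (solve $\alpha_b=\dbarb\beta$ as in the proof of Theorem~\ref{thm3}, extend, and correct by an exact form), whereas the paper proves surjectivity of $H^{p,n-1}_{\cR_-}$ by duality, using $\langle\xi,H^{p,n-1}_{\cR_-}\theta\rangle_{X_-}=\langle H^{p,n-1}_{\Id-\cR_+}\xi,\theta\rangle_{X_-}$ and $H^{p,n-1}_{\Id-\cR_+}={}^\star H^{n-p,1}_{\cR_-}{}^\star$ to reduce it to the degree-one injectivity discussed above. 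Note that this makes the paper's right end logically dependent on the left end, so repairing your degree-one argument is unavoidable under either route.
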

\begin{remark} If $n=2,$ then this sequence degenerates to 
\begin{equation}
\begin{CD}
0 @>>> \cH^{p,1}_{\Id-\cR_+}(X_-) @>H^{p,1}_{\cR_-}>>
\cH^{p,1}_{\cR_-}(X_-)
@>>> 0.
\end{CD}
\end{equation} 
In this case $H^{p,1}(\bX_-)$ is not isomorphic to $\cH^{p,1}_{\cR_-}(X_-),$
nor is $H^{p,1}(X_-,\cI)$ isomorphic to $\cH^{p,1}_{\Id-\cR_+}(X_-).$ The
argument given below shows that $H^{p,1}_{\cR_-}$ is injective for all $p.$ The
duality argument used at the end of the proof allows us to use the injectivity
of $H^{2-p,1}_{\cR_-}$ to deduce that it is also surjective.
\end{remark}
\begin{proof} We first need to show that $\cH^{p,1}_{\Id-\cR_+}(X_-)$ injects
  into $H^{p,1}(\bX_-).$ A form
  $\alpha$ belongs to $\cH^{p,1}_{\Id-\cR_+}(X_-)$ provided that
$\dbar\alpha=\dbar^*\alpha=0,$  $\alpha_b=0,$ and $\cS_p(\dbnc\alpha)_b=0.$ As
  $H^{p,1}(\bX_-)\simeq\cH^{p,1}_{\cR_-}(X_-),$ it suffices to show that
  $H^{p,1}_{\cR_-}(\alpha)=0$ if and only if $\alpha=0.$ A form in
  $\cH^{p,1}_{\Id-\cR_+}(X_-)$ belongs to $\Dom_{L^2}(\dbar^{p,1}_{\cR_-}),$
  hence, if $H^{p,1}_{\cR_-}(\alpha)=0,$ then
\begin{equation}
\alpha=\dbar\dbar^*G^{p,1}_{\cR_-}(\alpha)=\dbar\beta.
\end{equation}
Observe that $0=\alpha_b=\dbarb\beta_b.$ We can now show that $\alpha=0:$
\begin{equation}
\begin{split}
\langle \alpha,\alpha\rangle_{X_-} &=\langle \dbar\beta,\alpha\rangle_{X_-}\\
&=\langle (\dbnc\alpha)_b,\beta\rangle_Y.
\end{split}
\end{equation}
On the one hand $\cS_p(\dbnc\alpha)_b=0,$ while, on the other hand
$\cS_p(\beta_b)=\beta_b.$ This shows that $\langle
\alpha,\alpha\rangle_{X_-}=0.$ 

Now we show that $\Im\talpha_1=\Ker\beta_1.$ The containment
$\Im\talpha_1\subset\Ker\beta_1$ is clear because $\alpha_b=0$ for
$\alpha\in\cH^{p,1}_{\Id-\cR_+}(X_-).$ If $\xi\in\Ker\beta_1,$ then there is a
$(p,0)$-form, $\psi$ on $Y$ so that
\begin{equation}
\dbarb\psi=\xi_b.
\end{equation}
Let $\Psi_0$ denote a smooth extension of $\xi$ to $X_-;$ the form
$\xi-\dbar\Psi_0$ satisfies $(\xi-\dbar\Psi_0)_b=0,$ and therefore belongs to
$\Dom_{L^2}(\dbar^{p,1}_{\Id-\cR_+}).$ Hence we have the expression
\begin{equation}
\xi-\dbar\Psi_0=H^{p,1}_{\Id-\cR_+}(\xi-\dbar\Psi_0)+
\dbar\dbar^*G^{p,1}_{\Id-\cR_+}(\xi-\dbar\Psi_0). 
\end{equation}
If we let $\Psi_1=\dbar^*G^{p,1}_{\Id-\cR_+}(\xi-\dbar\Psi_0),$ then
\begin{equation}
\xi-\dbar(\Psi_0+\Psi_1)=H^{p,1}_{\Id-\cR_+}(\xi-\dbar\Psi_0).
\end{equation}
As $\xi-\dbar(\Psi_0+\Psi_1)$ and $\xi$ represent the same class 
${\xi}\in H^{p,1}(\bX_-),$ we see that $[\xi]\in\Im\talpha_1.$ This shows the exactness
at $H^{p,1}(\bX_-).$ The exactness through $H^{p,n-2}_b(Y)$ follows from
Proposition 4.3 in~\cite{AnHi}.

The next case we need to consider is $H^{p,n-1}(X_-,\cI).$  The range of
$\gamma_{n-2}$ consists of equivalence classes of exact $(p,n-1)$-forms,
$\dbar\txi,$ such that $\dbarb\xi_b=0.$ Such a form is evidently in
$\Dom_{L^2}(\dbar^{p,n-1}_{\cR_-}),$  and therefore
$H^{p,n-1}_{\cR_-}(\dbar\txi)=0.$ Now suppose that $H^{p,n-1}_{\cR_-}(\xi)=0,$
for a $\xi$ with $\dbar\xi=\xi_b=0.$ As
$\xi\in\Dom_{L^2}(\dbar^{p,n-1}_{\cR_-})$ it follows that
\begin{equation}
\xi=\dbar\dbar^*G^{p,n-1}_{\cR_-}(\xi).
\end{equation}
If we let $\theta=\dbar^*G^{p,n-1}_{\cR_-}(\xi),$ then clearly
\begin{equation}
0=\xi_b=\dbarb\theta_b,
\end{equation}
and therefore $\xi\in\Im\gamma_{n-2}.$

To complete the proof of this theorem, we need to show that $H^{p,n-1}_{\cR_-}$
is surjective. We use the isomorphism $H^{p,n-1}(X_-,\cI)\simeq
\cH^{p,n-1}_{\Id-\cR_+}(X_-).$ If
$\xi\in\cH^{p,n-1}_{\cR_-}(X_-)$ and $\theta\in\cH^{p,n-1}_{\Id-\cR_+}(X_-),$
then
\begin{equation}
\langle\xi,\theta\rangle_{X_-}=
\langle\xi, H^{p,n-1}_{\cR_-}\theta\rangle_{X_-}=
\langle H^{p,n-1}_{\Id-\cR_+}\xi,\theta\rangle_{X_-}.
\label{06.7.1}
\end{equation}
Using the relations in~\eqref{06.7.1} we see, by duality, that
$H^{p,n-1}_{\cR_-}$ is surjective if and only if $H^{p,n-1}_{\Id-\cR_+}$ is
injective. As $H^{p,n-1}_{\Id-\cR_+}={}^\star H^{n-p,1}_{\cR_-}{}^\star,$ this
injectivity follows from the proof of exactness at $\cH_{\Id-\cR_+}^{r,1}(X_-)$
for the case $r=n-p.$
\end{proof}

 We get a second gluing formula for $\chi^p_{\cO}(X).$
\begin{corollary} Suppose that $X, X_+, X_-$ are as above, then for $0\leq
  p\leq n,$ we have the following identities
\begin{equation}
\sum_{q=0}^{n}\dim  H^{p,q}(X)(-1)^q=
\sum_{q=0}^{n}[\dim \cH^{p,q}_{\cR_+}(X_+)+
\dim \cH^{p,q}_{\Id-\cR_+}(X_-)](-1)^q,
\label{eq77b}
\end{equation}
that is
\begin{equation}
\Ind(\eth^{\even}_X)=\Ind(\eth^{\even}_{+},\cR^{\even}_+)+\Ind(\eth^{\even}_{-},\Id-\cR^{\even}_+).
\label{8.2.1}
\end{equation}
\end{corollary}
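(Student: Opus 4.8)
The plan is to obtain \eqref{eq77b} from the first gluing formula \eqref{eq77} of Corollary~\ref{cor1} by trading the $X_-$-contribution written through $\cH^{p,q}_{\cR_-}(X_-)$ together with the Kohn--Rossi groups $H^{p,q}_b(Y)$ for one written through $\cH^{p,q}_{\Id-\cR_+}(X_-)$. The mechanism for this exchange is the exact sequence \eqref{les2}, which involves only quantities on $X_-$ and on $Y$. Concretely, I would show that
\begin{equation}
\sum_{q=0}^{n}(-1)^{q}\dim\cH^{p,q}_{\cR_-}(X_-)-\sum_{q=1}^{n-2}(-1)^{q}\dim H^{p,q}_b(Y)=\sum_{q=0}^{n}(-1)^{q}\dim\cH^{p,q}_{\Id-\cR_+}(X_-),
\end{equation}
and then substitute this into \eqref{eq77}; the $X_+$-summand $\sum_q(-1)^q\dim\cH^{p,q}_{\cR_+}(X_+)$ is carried along unchanged, and \eqref{eq77b} drops out.

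First I would take the alternating sum of dimensions along \eqref{les2}. Since the sequence is exact this sum vanishes. Using \eqref{eq2} to identify $H^{p,q}(\bX_-)\simeq\cH^{p,q}_{\cR_-}(X_-)$ for $1\le q\le n-2$, and \eqref{eq4} to identify $H^{p,q}(\bX_-,\cI)\simeq\cH^{p,q}_{\Id-\cR_+}(X_-)$ for $2\le q\le n-1$, and tracking the positions of the three families of terms ($\cH^{p,q}_{\Id-\cR_+}$, $\cH^{p,q}_{\cR_-}$, $H^{p,q}_b(Y)$), the vanishing of the alternating sum becomes the middle-degree identity
\begin{equation}
\sum_{q=1}^{n-1}(-1)^{q}\dim\cH^{p,q}_{\cR_-}(X_-)-\sum_{q=1}^{n-2}(-1)^{q}\dim H^{p,q}_b(Y)=\sum_{q=1}^{n-1}(-1)^{q}\dim\cH^{p,q}_{\Id-\cR_+}(X_-).
\end{equation}
(As a check, for $n=2$ this collapses to $\dim\cH^{p,1}_{\cR_-}(X_-)=\dim\cH^{p,1}_{\Id-\cR_+}(X_-)$, matching the degenerate form of \eqref{les2} noted in the remark.)

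Next I would restore the two extreme degrees $q=0$ and $q=n$, which do not appear in \eqref{les2}. At $q=0$ both spaces are the holomorphic $(p,0)$-forms: \eqref{eq2} and \eqref{eq4} give $\cH^{p,0}_{\cR_-}(X_-)\simeq H^{p,0}(\bX_-)$ and $\cH^{p,0}_{\Id-\cR_+}(X_-)=H^{p,0}(X_-)$, so their dimensions agree. At $q=n$, \eqref{eq2} gives $\dim\cH^{p,n}_{\cR_-}(X_-)=\dim H^{n-p,0}(X_-)$, while the Hodge-star duality $[\cH^{p,q}_{\cR_-}(X_-)]^*\simeq\cH^{n-p,n-q}_{\Id-\cR_+}(X_-)$ from the end of Section~\ref{s.nullsp}, applied with $(p,q)\mapsto(n-p,0)$, gives $\dim\cH^{p,n}_{\Id-\cR_+}(X_-)=\dim\cH^{n-p,0}_{\cR_-}(X_-)=\dim H^{n-p,0}(X_-)$. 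Thus the $q=0$ and $q=n$ contributions match term by term on the two sides, upgrading the middle-degree identity to the full-range identity displayed above. Substituting into \eqref{eq77} yields \eqref{eq77b}, and \eqref{8.2.1} then follows by reading each alternating sum as a Fredholm index: $\chi^p_{\cO}(X)=\Ind(\eth^{\even}_X)$ on the closed manifold, while the Hodge decompositions of Theorem~\ref{thm2}, together with the identification of kernels and cokernels of $\eth^{\even}$ as the even/odd harmonic spaces, give $\Ind(\eth^{\even}_+,\cR^{\even}_+)=\sum_q(-1)^q\dim\cH^{p,q}_{\cR_+}(X_+)$ and $\Ind(\eth^{\even}_-,\Id-\cR^{\even}_+)=\sum_q(-1)^q\dim\cH^{p,q}_{\Id-\cR_+}(X_-)$.

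The main obstacle is the bookkeeping at the two boundary degrees $q=0$ and $q=n$: these are invisible to \eqref{les2}, so one must verify directly that the discrepancy between $\cH^{p,\bullet}_{\cR_-}(X_-)$ and $\cH^{p,\bullet}_{\Id-\cR_+}(X_-)$ cancels there, and this is exactly where the top-degree identification in \eqref{eq2} and the Hodge-star duality of Section~\ref{s.nullsp} are essential. By contrast, the position-and-sign tracking through the interior of the long exact sequence, and the final reinterpretation of alternating sums as indices, are routine once the isomorphisms \eqref{eq2} and \eqref{eq4} are in hand.
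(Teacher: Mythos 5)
Your proposal is correct and follows essentially the same route as the paper's proof: take the alternating sum of dimensions along the exact sequence \eqref{les2}, identify its terms with $\cH^{p,q}_{\cR_-}(X_-)$, $\cH^{p,q}_{\Id-\cR_+}(X_-)$ and $H^{p,q}_b(Y)$ via \eqref{eq2} and \eqref{eq4}, supply the extreme degrees $q=0,n$ using the isomorphisms from Section~\ref{s.nullsp}, and substitute into \eqref{eq77}. In fact your middle-degree identity carries the correct sign, $\sum_{q=1}^{n-1}(-1)^q\dim\cH^{p,q}_{\Id-\cR_+}(X_-)=\sum_{q=1}^{n-1}(-1)^q\dim\cH^{p,q}_{\cR_-}(X_-)-\sum_{q=1}^{n-2}(-1)^q\dim H^{p,q}_b(Y)$, whereas the paper's displayed intermediate identity has a sign typo (a $+$ on the Kohn--Rossi sum where a $-$ is required both by exactness of \eqref{les2} and by consistency with \eqref{eq77}), though the final formul\ae\ \eqref{eq77b} and \eqref{8.2.1} are unaffected.
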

\begin{proof}
These formul\ae\ follow from those in Corollary~\ref{cor1} using the
consequence of the previous theorem that
\begin{equation}
\sum_{q=1}^{n-1}\dim\cH^{p,q}_{\Id-\cR_+}(X_-)(-1)^q=
\sum_{q=1}^{n-1}\dim\cH^{p,q}_{\cR_-}(X_-)(-1)^q+
\sum_{q=1}^{n-2}\dim H^{p,q}_{b}(Y)(-1)^q.
\end{equation}
If $n=2$ the last sum is absent. To complete the proof we use the isomorphisms
\begin{equation}
\begin{split}
&\cH^{p,0}_{\cR_-}(X_-)=\cH^{p,0}_{\Id-\cR_+}(X_-)= H^{p,0}(X_-)\\
&\cH^{p,n}_{\cR_-}(X_-)=\cH^{p,n}_{\Id-\cR_+}(X_-)\simeq
  [H^{n-p,0}(X_-)]^{\star}.
\end{split}
\end{equation}
\end{proof}

\begin{remark} These formul\ae\ are
    exactly what would be predicted, in the elliptic case, from Bojarski's
    formula: Let $\cP_{\pm}^{\eo}$ denote the Calderon projectors for
    $\dbar+\dbar^*$ acting on $\Lambda^{p,\eo} X_{\pm}.$ Bojarski proved that,
\begin{equation}
\Ind(\eth^{\even}_{X})=\Rind(\Id-\cP^{\even}_{-},\cP^{\even}_{+}).
\end{equation}
Let $P$ be a projection in the Grassmanian of $\cP^{\even}_{+}.$ From Bojarski's formula we
 easily deduce the following identity
\begin{equation}
\Ind(\eth^{\even}_{X})=\Ind(\eth^{\even}_+,P)+\Ind(\eth^{\even}_-,\Id-P).
\label{eqn6.15.1}
\end{equation}
The proof uses elementary properties of the relative index:
\begin{equation}
\begin{split}
-\Rind(P_2,P_1)=
&\Rind(P_1,P_2)=-\Rind(\Id-P_1,\Id-P_2)\\
\Rind(P_1,P_3)&=\Rind(P_1,P_2)+\Rind(P_2,P_3).
\end{split}
\label{eqn6.15.2}
\end{equation}
To deduce~\eqref{eqn6.15.1} we use the observation that
\begin{equation}
 \Ind(\eth^{\even}_{+},P)=\Rind(\cP^{\even}_{+},P),\quad\Ind(\eth^{\even}_{-},\Id-P)=\Rind(\cP^{\even}_{-},\Id-P).
\end{equation}
Hence, we see that
\begin{equation}
\begin{split}
\Ind(\eth^{\even}_{+},P)+\Ind(\eth^{\even}_{-},\Id-P)&=\Rind(\cP^{\even}_{+},P)+\Rind(\cP^{\even}_{-},\Id-P)\\
&=\Rind(\cP^{\even}_{+},P)-\Rind(\Id-\cP^{\even}_{-},P)\\
&=\Rind(\cP^{\even}_{+},\Id-\cP^{\even}_{-}).
\end{split}
\end{equation}
 The proofs of the identities in~\eqref{eqn6.15.2} use the theory of Fredholm
 pairs. If $H$ is a Hilbert space, then a pair of subspaces $H_1, H_2$ of $H$
 is a Fredholm pair if $H_1\cap H_2$ is finite dimensional, $H_1+H_2$ is closed
 and $H/(H_1+H_2)\simeq H_1^{\bot}\cap H_2^{\bot}$ is finite dimensional. One
 uses that, for two admissible projectors $P_1,P_2,$ the subspaces of
 $L^2(Y;E)$ given by $H_1=\Im P_1, H_2=\Im(\Id-P_2)$ are a Fredholm pair and
\begin{equation}
\Rind(P_1,P_2)=\dim H_1\cap H_2-\dim H_1^{\bot}\cap H_2^{\bot}. 
\end{equation}

In our case the projectors are $\cP^{\even}_{\pm}$ and $\cR^{\even}_{\pm}.$
While it is true that, e.g. $\Im \cP^{\even}_{+}\cap\Im (\Id-\cR^{\even}_{+})$
is finite dimensional, it is not true that $\Im \cP^{\even}_{+}+\Im
(\Id-\cR^{\even}_{+})$ is a closed subspace of $L^2.$ So these projectors do
not define a traditional Fredholm pair. If we instead consider these operators
as acting on smooth forms, then the $\Im \cP^{\even}_{+}$ and $\Im
(\Id-\cR^{\even}_{+})$ are a ``Frechet'' Fredholm pair. As the result predicted by
Bojarski's theorem remains true, this indicates that perhaps there is a
generalization of the theory of Fredholm pairs that includes both the elliptic
and subelliptic cases.

It seems a natural question whether the Agranovich-Dynin formula holds on the
pseudoconcave side as well, that is
\begin{equation}
\Ind(\eth^{\even}_-,\Id-\cR^{\prime\even}_+)+
\Ind(\eth^{\even}_-,\Id-\cR^{\even}_+)\overset{?}{=}\Rind(\cS'_p,\cS_p).
\end{equation}
If this were the case, then~\eqref{8.2.1} would also hold for boundary
conditions defined by generalized Szeg\H o projectors. Because the null space
of $(\eth^{\even}_-,\Id-\cR^{\prime\even}_+)$ does not seem to split as a
direct sum over form degrees, the argument used to prove Theorem~\ref{thm3.3}
does not directly apply to this case.
\end{remark}

\section{General holomorphic coefficients}
Thus far we have considered the Dirac operator acting on sections of
$\Lambda^{p,\eo}.$ Essentially everything we have proved for cases where $p>0$
remains true if the bundles $\Lambda^{p,\eo}$ are replaced by
$\Lambda^{\eo}\otimes\cV,$ where $\cV\to X$ is a holomorphic vector
bundle. In~\cite{Epstein3} we prove the necessary estimates for the twisted
Dirac operator acting on sections of $\Lambda^{\eo}\otimes\cV.$ For example,
suppose that $X_+$ is strictly pseudoconvex, then defining
\begin{equation}
E^{\cV,1}_{0}(\bX_+)=\frac{\{\dbar\alpha:\: \alpha\in\CI(\bX_+;\cV)
\text{ and }\dbar_b\alpha_b=0\}}
{\{\dbar\alpha:\: \alpha\in\CI(\bX_+;\cV)
\text{ and }\alpha_b=0\}},
\end{equation}
we can easily show that
\begin{equation}
\Ind(\eth^{\even}_{\cV+},\cR^{\even}_{+})=-\dim E^{\cV,1}_{0}+\sum_{q=1}^n
H^{q}(X_+;\cV).
\end{equation}
The vector space $E^{\cV,1}_{0}$ is the obstruction to extending
$\dbarb$-closed sections of $\cV\restrictedto_{bX_+}$ as holomorphic sections
of $\cV.$ Hence it is isomorphic to
$H^{n-1}_{\dbar}(X_+;\Lambda^{n,0}\otimes\cV'),$ see Proposition 5.13
in~\cite{kohn-rossi}. It is therefore finite dimensional, and vanishes if $X_+$
is a Stein manifold.

The Agranovich-Dynin formula and the Bojarski formula also hold for general
holomorphic coefficients.
\begin{theorem}
Let $X_+$ be a compact strictly pseudoconvex K\"ahler manifold and $\cV\to X_+$
a holomorphic vector bundle.  If the classical Szeg\H o projector onto the null
space of $\dbarb,$ acting on sections of $\cV\restrictedto_{bX_+}$ is denoted
$\cS_{\cV},$ and $\cS_{\cV}'$ is a generalized Szeg\H o projector, then
\begin{equation}
\Ind(\eth^{\even}_{\cV+},\cR^{\prime\even}_+)-
\Ind(\eth^{\even}_{\cV+},\cR_+^{\even})=\Rind(\cS_{\cV},\cS_{\cV}').
\end{equation}
\end{theorem}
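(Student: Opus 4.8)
The plan is to reduce this twisted statement to the already-established Theorem~\ref{thm3.3} by the principle stated at the opening of this final section: everything proved for $\Lambda^{p,\eo}$ goes through verbatim when the twisting bundle $\Lambda^{p,0}$ is replaced by a general holomorphic bundle $\cV$, provided one checks that each structural ingredient used in the $p>0$ argument survives. So first I would make explicit the dictionary between the two settings: the classical Szeg\H o projector $\cS_p$ becomes $\cS_{\cV}$, the generalized one $\cS_p'$ becomes $\cS_{\cV}'$, the space $E^{p,1}_0$ becomes $E^{\cV,1}_0$ (already defined in this section), and the harmonic spaces $\cH^{\cV,q}_{\cR_+}(X_+)$ replace $\cH^{p,q}_{\cR_+}(X_+)$. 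The boundary conditions $\cR_+^{\cV\even}$ and $\cR_+^{\prime\cV\even}$ are defined exactly as in~\eqref{7.27.1}, now with $\cS_{\cV}'$ in place of $\cS_p'$.

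Next I would verify the two analytic inputs that make the proof of Theorem~\ref{thm3.3} run. The first is that the subelliptic estimates of the Proposition in Section~\ref{s.pscnvx}, hence the Hodge decomposition of Theorem~\ref{thm2} and the commutation identities~\eqref{7.28.3}--\eqref{7.28.4}, hold for the twisted operator $\eth_{\cV+}$; these are asserted in the opening paragraph of this section to follow from the estimates proved in~\cite{Epstein3} for $\eth^{\eo}\otimes\cV$. The second is the twisted analogue of Lemma~\ref{lem17}, namely the isomorphism
\begin{equation}
\cH^{\cV,1}_{\cR_+}(X_+)\simeq\cH^{\cV,1}_{\dbar}(X_+)\oplus E^{\cV,1}_0.
\end{equation}
Its proof is word-for-word that of Lemma~\ref{lem17}: the integration-by-parts pairing $\langle\dbar\beta,\sigma\rangle_{X_+}=\langle\beta,\dbnc\sigma\rangle_{bX_+}$, the construction of a harmonic representative with prescribed $\dbnc$ via $\beta=G^{\cV,0}_{\dbar}\dbar^*\dbar(\rho\ta)$, and the uniqueness modulo $\cH^{\cV,1}_{\dbar}(X_+)$ all use only $\dbar^2=0$, the Hodge theory, and the boundary conditions, none of which sees the difference between $\Lambda^{p,0}$ and $\cV$.

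With these in hand I would then copy the mechanism of Theorem~\ref{thm3.3}. Introduce the subprojector $\hcS_{\cV}$ of $\cS_{\cV}$ onto $\cH^{\cV,0}_{\dbar}(X_+)\restrictedto_{bX_+}$, so that $\Rind(\cS_{\cV},\hcS_{\cV})=\dim E^{\cV,1}_0$. The $q=0$ analysis gives $\cH^{\cV,0}_{\cR_+'}(X_+)\simeq\Ker[\cS_{\cV}':\Im\hcS_{\cV}\to\Im\cS_{\cV}']$, and the $q=1$ analysis gives $\cH^{\cV,1}_{\cR_+'}(X_+)/\cH^{\cV,1}_{\dbar}(X_+)\simeq\Ker[\hcS_{\cV}:\Im\cS_{\cV}'\to\Im\hcS_{\cV}]$; all higher groups agree by Lemma~\ref{lembv1}. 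Subtracting the index expressions and invoking the cocycle formula $\Rind(\hcS_{\cV},\cS_{\cV}')+\Rind(\cS_{\cV},\hcS_{\cV})=\Rind(\cS_{\cV},\cS_{\cV}')$ yields the claim. The only genuine point needing care, and the step I would flag as the main obstacle, is confirming that $\cS_{\cV}$ and its generalized counterpart $\cS_{\cV}'$ are bona fide generalized Szeg\H o projectors in the sense of~\cite{EpsteinMelrose3} acting on the bundle $E=\cV\restrictedto_{bX_+}$, so that the relative index $\Rind(\cS_{\cV},\cS_{\cV}')$ and its cocycle property are defined; this is exactly the setting allowed in the definition recalled after~\eqref{eqn04.2}, where the principal symbol is $s\otimes\Id_E$, so the verification is immediate once one notes $\dbarb$ twisted by a holomorphic bundle still has the required Heisenberg-order-zero structure.
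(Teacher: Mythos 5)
Your proposal is correct and is exactly the route the paper takes: the paper's own ``proof'' of this theorem is just the closing remark that the arguments are essentially identical to those given for $\Lambda^{p,\eo},$ i.e., one reruns the proof of Theorem~\ref{thm3.3} with the twisting bundle $\Lambda^{p,0}$ replaced by $\cV,$ relying on the twisted estimates from~\cite{Epstein3}. Your write-up simply makes explicit the details the paper leaves to the reader (the twisted analogue of Lemma~\ref{lem17}, the subprojector $\hcS_{\cV},$ the $q=0$ and $q=1$ identifications, and the cocycle formula for the relative index), in the same order and with the same mechanism.
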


\begin{corollary} Suppose that $X, X_+, X_-$ are as above and $\cV\to X$ is a
  holomorphic vector bundle, then  we have the following identity
\begin{equation}
\sum_{q=0}^{n}\dim
H^{q}(X;\cV)(-1)^q=\sum_{q=0}^{n}[\dim \cH^{q}_{\cR_+}(X_+;\cV)+
\dim \cH^{q}_{\Id-\cR_+}(X_-;\cV)](-1)^q
\end{equation}
that is
\begin{equation}
\Ind(\eth^{\even}_{\cV+})=\Ind(\eth^{\even}_{\cV+},\cR^{\even}_{+})
+\Ind(\eth^{\even}_{\cV-},\Id-\cR^{\even}_{+}).
\label{eq777b}
\end{equation}
\end{corollary}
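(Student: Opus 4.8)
The plan is to transcribe the proof of the untwisted Bojarski formula~\eqref{eq77b}--\eqref{8.2.1}, with the holomorphic bundle $\cV$ in place of $\Lambda^{p,0}.$ Throughout Sections~\ref{s.pscnvx}--\ref{s.lexseq} the factor $\Lambda^{p,0}$ enters only as a \emph{holomorphic} twisting bundle: every subelliptic estimate, Hodge decomposition, integration-by-parts identity, and exact sequence uses only that this factor carries a $\dbar$-operator with $\dbar^2=0$ and an induced $\dbarb$ on the boundary. Since $\cV$ is holomorphic and the subelliptic estimates for $\eth^{\eo}_{\cV\pm}$ with the boundary conditions $\cR_{\pm}'$ are established in~\cite{Epstein3}, the twisted analogues of Theorem~\ref{thm1}, Theorem~\ref{thm2}, and Lemmas~\ref{lembv1}--\ref{lembv3} hold verbatim. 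In particular the nullspace computations of Section~\ref{s.nullsp} give $\cH^{0}_{\cR_+}(X_+;\cV)=0,$ $\cH^{1}_{\cR_+}(X_+;\cV)\simeq\cH^{1}_{\dbar}(X_+;\cV)\oplus E^{\cV,1}_0,$ and $\cH^{q}_{\cR_+}(X_+;\cV)\simeq H^{q}(\bX_+;\cV)$ for $q\geq 2,$ together with the corresponding statements on $X_-.$

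Next I would establish the two modified Andreotti--Hill sequences of Theorem~\ref{thm3} and~\eqref{les2} with $\cV$-coefficients. The maps $\talpha_0,\tbeta_0,\tgamma_0,\alpha_q,\beta_q,\gamma_q$ are defined by exactly the same formul\ae\ (restriction, difference of boundary traces, and $\dbar$ applied to a distinguished representative of a class in the twisted Kohn--Rossi group $H^{q}_b(Y;\cV)$), all of which make sense because $\cV$ is holomorphic. Each exactness verification in those proofs is a formal consequence of the twisted Hodge decomposition (Theorem~\ref{thm2}) and of the solvability/extension statement of Folland--Kohn (Theorem 5.3.1 in~\cite{FollandKohn1}), both valid for an arbitrary holomorphic coefficient bundle. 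Since the alternating sum of dimensions along an exact sequence vanishes, the $\cV$-analogue of~\eqref{les1} yields, just as in Corollary~\ref{cor1},
\begin{equation}
\chi_{\cO}(X;\cV)=\sum_{q=0}^{n}(-1)^q\big[\dim\cH^{q}_{\cR_+}(X_+;\cV)+\dim\cH^{q}_{\cR_-}(X_-;\cV)\big]-\sum_{q=1}^{n-2}(-1)^q\dim H^{q}_b(Y;\cV).
\end{equation}

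The second step converts the $\cR_-$ contribution on $X_-$ into the dual condition $\Id-\cR_+.$ From the $\cV$-version of~\eqref{les2} one reads off, exactly as in the proof of~\eqref{eq77b}, the identity
\begin{equation}
\sum_{q=1}^{n-1}(-1)^q\dim\cH^{q}_{\Id-\cR_+}(X_-;\cV)=\sum_{q=1}^{n-1}(-1)^q\dim\cH^{q}_{\cR_-}(X_-;\cV)+\sum_{q=1}^{n-2}(-1)^q\dim H^{q}_b(Y;\cV).
\end{equation}
Substituting this into the previous display cancels the Kohn--Rossi terms, and combining with the extreme-degree identities $\cH^{0}_{\cR_-}(X_-;\cV)=\cH^{0}_{\Id-\cR_+}(X_-;\cV)=H^{0}(X_-;\cV)$ and $\cH^{n}_{\cR_-}(X_-;\cV)=\cH^{n}_{\Id-\cR_+}(X_-;\cV)$ produces~\eqref{eq777b}.

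The one genuinely $\cV$-specific point, which I expect to be the main obstacle, is the duality used at the two extreme degrees on the pseudoconcave side. In the $\Lambda^{p,0}$ case the Hodge star (which incorporates conjugation) carried $\Lambda^{p,0}$-coefficients to $\Lambda^{n-p,0}$-coefficients, keeping the argument inside one family; for a general bundle the star produces the \emph{dual} holomorphic bundle $\cV',$ so that $\cH^{n}_{\cR_-}(X_-;\cV)\simeq[H^{0}(X_-;\cV')]^{\star}$ and $E^{\cV,1}_0\simeq H^{n-1}_{\dbar}(X_+;\Lambda^{n,0}\otimes\cV')$ (Proposition 5.13 of~\cite{kohn-rossi}). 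I would need to check that these dual-coefficient groups reassemble so that the two alternating sums still telescope to $\chi_{\cO}(X;\cV);$ since $\cV'$ is again holomorphic and all groups in sight are finite dimensional, this is a matter of careful bookkeeping rather than new analysis, and it goes through just as in Sections~\ref{s.nullsp} and~\ref{s.lexseq}.
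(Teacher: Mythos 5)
Your proposal is correct and follows exactly the route the paper intends: the paper's own ``proof'' is simply the remark that the arguments of Sections~\ref{s.pscnvx}--\ref{s.lexseq} go through verbatim with $\Lambda^{p,0}$ replaced by the holomorphic bundle $\cV,$ which is precisely what you carry out, including the correct identification of the only delicate point (the Hodge star now lands in $\Lambda^{n,0}\otimes\cV'$-coefficients, consistent with the paper's citation of Proposition 5.13 of~\cite{kohn-rossi}).
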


The proofs of these statements are essentially identical to those given above
and are left to the interested reader.

\end{document}